\documentclass[11pt, reqno,sumlimits, letterpaper]{amsart}
\usepackage{amssymb,amscd, epsfig, mathrsfs, xypic, amsmath,amsthm, dsfont, txfonts} 
\xyoption{all}
\addtolength{\oddsidemargin}{-0.9in}
\addtolength{\evensidemargin}{-0.9in}
\addtolength{\topmargin}{-0.6in}  
\addtolength{\textheight}{1in}
\addtolength{\textwidth}{1.7in}
  
%} 
%%%%%%%%%%%%%%%%%%%%%%%%%%%%%%%%%
%%              Theorems                               %
%%%%%%%%%%%%%%%%%%%%%%%%%%%%%%%%%%

\newtheorem{thm}{Theorem}[section] 
\newtheorem{cor}[thm]{Corollary}
\newtheorem{lem}[thm]{Lemma}
\newtheorem{prop}[thm]{Proposition} 
\newtheorem{df-pr}[thm]{Definition-Proposition}

\theoremstyle{definition}
\newtheorem{defn}[thm]{Definition}
   
\newtheorem{rem}[thm]{Remark}

\newtheorem{exm}[thm]{Example}

%%%%%%%%%%%%%%%%%%%%%%%%%%%%%%%
%%      Definitions and Commands                       %
%%%%%%%%%%%%%%%%%%%%%%%%%%%%%%%%

\newcommand{\RR}{{\mathbb R}}

\newcommand{\QQ}{{\mathbb Q}} 
\newcommand{\CC}{{\mathbb C}}

\newcommand{\ZZ}{{\mathbb Z}}

\newcommand{\sfb }{{\mathsf b }}

\newcommand{\sfd }{{\mathsf d}}

\newcommand{\sff }{{\mathsf f}}
\newcommand{\sfg }{{\mathsf g}}
\newcommand{\sfh }{{\mathsf h}}

\newcommand{\sfj }{{\mathsf j}}

\newcommand{\sfp }{{\mathsf p}}

\newcommand{\sfr }{{\mathsf r}}

\newcommand{\frakg}{{\mathfrak g }}

\newcommand{\frakl}{{\mathfrak   l}}

\newcommand{\frakr}{{\mathfrak  r}}

\newcommand{\frakt}{{\mathfrak  t}}

\newcommand{\calF}{{\mathcal F}}

\newcommand{\calH}{{\mathcal H}}
\newcommand{\calI}{{\mathcal I}}
\newcommand{\calJ}{{\mathcal J}}
\newcommand{\calK}{{\mathcal K}}

\newcommand{\calS}{{\mathcal S}}
\newcommand{\calT}{{\mathcal T}}

\newcommand{\calX}{{\mathcal X}}

\newcommand{\calZ}{{\mathcal Z}}

\newcommand{\scT}{{\mathscr T}}

\newcommand{\sfV }{{\mathsf V}}

\newcommand{\sfT }{{\mathsf T}}

\newcommand{\sfR }{{\mathsf R}}
\newcommand{\sfQ }{{\mathsf Q}}

\newcommand{\sfL }{{\mathsf L}}

\newcommand{\sfJ }{{\mathsf J}}

\newcommand{\sfG }{{\mathsf G}}

\newcommand{\sfD }{{\mathsf D}}
\newcommand{\sfC }{{\mathsf C}}
\newcommand{\sfB }{{\mathsf B}}
\newcommand{\sfA }{{\mathsf A}}

\newcommand{\CP}{{\mathbb C}{\mathbb P}}

\newcommand{\subt}{{\backslash}}

\newcommand{\lan}{{\langle}}
\newcommand{\ran}{{\rangle}}

\newcommand{\inc}{\hookrightarrow}

\newcommand{\sstar}{\operatorname{star}}

\newcommand{\Delt}{\operatorname{Del}}

\newcommand{\id}{{\operatorname{id}}}

\newcommand{\im}{{\operatorname{Im}}}

\newcommand{\U}{{\mbox{U}}}

\newcommand{\Mat}{{\operatorname{Mat}}}

\newcommand{\Tor}{{\operatorname{Tor}}}

\newsavebox{\savepar}

\numberwithin{equation}{section}

\newcounter{labelflag} \setcounter{labelflag}{0}
\newcommand{\labelon}{\setcounter{labelflag}{1}}
\newcommand{\Label}[1]{\ifnum\thelabelflag=1\ifmmode
\makebox[0in][l]{\qquad\fbox{\rm#1}} \else
\marginpar{\vspace{0.7\baselineskip} \hspace{-1.1\textwidth}
\fbox{\rm#1}} \fi \fi \label{#1} } \labelon
%%%%%%%%%%%%%%%%%%%%%%%%%%%%%%%%%%
\begin{document}
\title{Connected sums of simplicial complexes and equivariant cohomology}

\author{Tomoo Matsumura} \address{Algebraic Structure and its Applications Research Center, Department of Mathematical Science,  KAIST, Daejeon, 305-701, 
Republic of Korea}\email{tomoomatsumura@kaist.ac.kr}

\author{W. Frank Moore} \address{Department of Mathematics, Wake Forest
  University, Winston-Salem, NC 27106, USA} \email{moorewf@wfu.edu}
  
%\address{Max-Planck Institute of Mathematics}
%\email{tmatsu@mpim-bonn.mpg.de {\bf \today}}%
\begin{abstract}   
In this paper, we discuss the \emph{connected sum} $K_1~\#^Z K_2$ of simplicial complexes $K_1$ and $K_2$, as well as define the notion of a \emph{strong} connected sum.  Geometrically, the connected sum is motivated by Lerman's symplectic cut applied to a toric orbifold, and algebraically, it is motivated by the connected sum of rings introduced by Ananthnarayan-Avramov-Moore \cite{AAM}.

We show that the Stanley-Reisner ring of a connected sum $K_1~\#^Z K_2$ is the connected sum of the Stanley-Reisner rings of $K_1$ and $K_2$ along the Stanley-Reisner ring of $K_1 \cap K_2$.  The strong connected sum $K_1~\#^Z K_2$ is defined in such a way that when $K_1,K_2$ are Gorenstein, and $Z$ is a suitable subset of $K_1 \cap K_2$, then the Stanley-Reisner ring of $K_1 ~\#^Z K_2$ is Gorenstein, by work appearing in \cite{AAM}.  These algebraic computations can be interpreted in terms of the equivariant cohomology of moment angle complexes and we describe the symplectic cut of a toric orbifold in terms of moment angle complexes.
\end{abstract}

\footnote{2010 Mathematics Subject Classification: 55N91 (Primary) 14M25, 57R18, 16S37, 53D99 (Secondary Subjects)}
\maketitle
%\tableofcontents

%%%%%%%%%%%%%%%%%%%%%%%%%%%%%%%%%%%%%%%%%%
\section{{\bf Introduction}}

The \emph{moment angle complex} $\calZ_K$ associated to a simplicial complex $K$ was introduced by Buchstaber and Panov in \cite{BP99} as a disc-circle
decomposition of the Davis-Januszkiewicz universal space.  It has been actively studied in \emph{toric topology} and its connections to symplectic and algebraic geometry, and combinatorics. The original aim of introducing such a space is to generalize symplectic or algebraic toric manifolds to topological toric manifolds that are now called \emph{quasi-toric manifolds} introduced in \cite{DavisJanuszkiewicz91}.

The goals of this paper are to introduce a notion of the \emph{connected sum of simplicial complexes} by understanding the combinatorial aspect of Lerman's
symplectic cut \cite{Lerman} of a symplectic toric orbifold, and to understand the algebra structure of the (equivariant) cohomology of the corresponding moment angle complex in the framework of the \emph{connected sum of rings} introduced by Ananthnarayan-Avramov-Moore \cite{AAM}. The connected sum of simplicial complexes introduced in this paper is a more general operation than just the connected sum along a facet. 

In the first part of this paper (Section \ref{cuttingprocess}), we study a symplectic cut of a toric orbifold in terms of moment angle complexes and describe the (equivariant) cohomology ring of the toric orbifold in terms of the ones of the cut pieces, using the notion of the connected sum of rings:
\begin{thm}[Theorem \ref{main}]
Let $\calX_+$ and $\calX_-$ be the toric orbifold defined by a symplectic cut of a toric orbifold $\calX$. Let $\sfg_{\pm}: \calX_o \inc \calX_{\pm}$  be the toric sub-orbifold corresponding to the section of the cut. Let $\#$ denote  the connected sum of rings (See Definition \ref{defconn}) which is defined using the pushforward and pullback maps $\sfg_{\pm*}$ and  $\sfg_{\pm}^*$. We have
\[
H^*_{\sfR}(\calX;\ZZ) \cong H^*_{\sfR}(\calX_+, \ZZ) ~\#_{H^*_{\sfR}(\calX_o; \ZZ)}^{H^*_{\sfR}(\calX_o; \ZZ)} H^*_{\sfR}(\calX_-; \ZZ)
\]
Furhter more this descends to the non-equivariant cohomology over $\QQ$:
\[
H^*(\calX;\QQ) \cong H^*(\calX_+;\QQ) ~\#_{H^*(\calX_o; \QQ)}^{H^*(\calX_o; \QQ)} H^*(\calX_-; \QQ).
\]
This holds over $\ZZ$-coefficients if all of the cohomology rings are concentrated in even degrees.
\end{thm}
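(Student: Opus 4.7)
The plan is to run the proof through three layers: geometric (the symplectic cut), combinatorial (the fan / simplicial complex), and algebraic (Stanley--Reisner rings and connected sums). The key bridge is to show that Lerman's symplectic cut decomposes the simplicial fan of $\calX$ as a connected sum of the fans of $\calX_\pm$ along the fan of the section $\calX_o$, after which the theorem follows from the Stanley--Reisner version of the connected sum statement (proved earlier in the paper) and the standard identification of equivariant cohomology of a toric orbifold with its Stanley--Reisner ring.

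First I would analyze the symplectic cut combinatorially. Given the simplicial fan $\Sigma$ of $\calX$ and the cutting hyperplane that defines $\calX_o$, I would verify that $\Sigma$ splits into two subfans $\Sigma_\pm$ whose underlying simplicial complexes $K_\pm$ satisfy $K_+ \cap K_- = K_o$ together with the gluing data required for a connected sum $K = K_+~\#^{Z} K_-$ in the sense of the paper, with $Z$ determined by the facets of $K_o$ and the new ray introduced by the cut. This is the step I expect to be the main obstacle: one must check that Lerman's geometric construction literally realizes the combinatorial recipe of the connected sum of simplicial complexes and, in particular, that the star/link data match so that the strong connected sum hypotheses of the prior sections apply.

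Next, I would invoke the Buchstaber--Panov / Davis--Januszkiewicz identification
\[
H^*_{\sfR}(\calX_\bullet;\ZZ) \;\cong\; \SR(K_\bullet)
\]
for $\bullet \in \{+, -, o, \emptyset\}$, sending the equivariant cohomology to the Stanley--Reisner ring of the corresponding simplicial complex. Under this identification, the geometric pushforward $\sfg_{\pm *}$ and pullback $\sfg_\pm^*$ along the inclusions of the section become the algebraic pushforward and pullback maps attached to the inclusion $K_o \hookrightarrow K_\pm$; this is a formal check using the $\T$-equivariant Gysin sequence for the inclusion of a toric sub-orbifold, and the combinatorial description of both maps in terms of face rings. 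Combined with the structural theorem that $\SR(K_+ ~\#^Z K_-)$ is the connected sum of the Stanley--Reisner rings along $\SR(K_o)$ (with these very maps), this yields the desired equivariant isomorphism.

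Finally, for the non-equivariant statement, I would quotient by the ideal generated by the image of $H^*(B\T;\QQ) \to H^*_{\sfR}(\calX;\QQ)$. Since connected sums of rings are compatible with quotienting by regular sequences pulled back from the base, and since over $\QQ$ the Serre spectral sequence for $\calX \to \calX_{h\T} \to B\T$ degenerates, one obtains
\[
H^*(\calX;\QQ) \;\cong\; H^*(\calX_+;\QQ) ~\#_{H^*(\calX_o;\QQ)}^{H^*(\calX_o;\QQ)} H^*(\calX_-;\QQ).
\]
Over $\ZZ$, the same argument goes through provided the cohomologies are concentrated in even degrees, which both eliminates Tor obstructions in the Eilenberg--Moore / Serre computation and guarantees that the relevant linear forms form a regular sequence; this gives the last sentence of the theorem.
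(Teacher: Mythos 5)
Your approach is a genuinely different route from the paper's, and, while it can be made to work, it inverts the logical order of the paper and requires more machinery to close. The paper proves Theorem \ref{main} entirely within the topological framework of Section~2: it uses the $\tilde{\sfT}$-equivariant homeomorphisms $M_{\pm}\cong\calZ_{K_{\pm}}$, $M'\cong\calZ_{K_{\Delta}}$, then runs the Mayer--Vietoris sequence for $\calZ_{K_+\cup K_-}=\calZ_{K_+}\cup\calZ_{K_-}$ together with the relative cohomology sequence for $(\calZ_{K_+\cup K_-},\calZ_{K_{\Delta}})$, identifies the connecting terms using a Thom isomorphism for $\calZ_W^{\circ}\subset\calZ_W$ (this is how the pushforwards $\sfh_{\pm*}=\sfg_{\pm*}$ appear), and then observes that both sequences split into short exact sequences whenever the relevant cohomology vanishes in odd degrees. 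The equivariant statement follows because $\tilde{\sfT}$-equivariant cohomology of a moment angle complex is a Stanley--Reisner ring, hence evenly graded, and the non-equivariant statement follows by running the same two sequences with $\tilde{\sfG}$-equivariant cohomology and citing Danilov/Jurkiewicz for the vanishing of odd rational cohomology of toric orbifolds. By contrast, you route through the combinatorial and algebraic layers, which the paper develops \emph{after} Theorem~\ref{main}: your step~1 is essentially Theorem~\ref{P1} (that $K_{\Delta}$ is a strong connected sum $K_+~\#^ZK_-$), and your step~2 is Theorem~\ref{thm2} (that $\ZZ[K_1~\#^ZK_2]\cong\ZZ[K_1]~\#^{\calJ_Z}_{\ZZ[W]}\ZZ[K_2]$). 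This is a valid derivation of the equivariant statement, but note a subtlety you will need to address: Theorem~\ref{thm2} gives the module $\sfV=\calJ_Z$, the ideal in $\ZZ[W]$ generated by $x_\sigma$ for $\sigma\in Z$, whereas Theorem~\ref{main} asserts the module is $H^*_{\sfR}(\calX_o;\ZZ)\cong\ZZ[W]$ itself. To reconcile them you must show $\calJ_Z=x_o\cdot\ZZ[W]\cong\ZZ[W](-2)$ as a $\ZZ[W]$-module in the polytope-cut situation (the paper observes this in the proof of Lemma~\ref{badgene}), and that multiplication by $x_o$ under the Stanley--Reisner identification is exactly the Gysin map $\sfg_{\pm*}$. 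Finally, for the non-equivariant statement your quotient-by-a-regular-sequence argument is morally the content of Section~4.3 (Lemma~\ref{fiberprodtor}), and indeed requires the $\Tor_1$-vanishing you acknowledge; the paper sidesteps this by simply rerunning the two topological exact sequences with $\tilde{\sfG}$-coefficients, which is cleaner and avoids commuting the connected-sum construction with base change. Net effect: your route buys a more structural explanation, but at the cost of invoking the combinatorial Theorem~\ref{P1}, the algebraic Theorem~\ref{thm2}, the identification $\calJ_Z\cong\ZZ[W](-2)$, and Tor-vanishing for base change; the paper's proof is shorter and self-contained within Section~2.
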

Our method is to identify the toric orbifolds as quotient stacks of moment angle complexes by a torus action and we regard the (equivariant) cohomology of toric orbifolds as the (equivariant) cohomology of moment angle complexes with appropriate torus actions. We also give a description of the cohomology ring of $\calX_-$ in terms of $\calX_+$ and $\calX$ in a similar fashion (Theorem \ref{main2}), which can be interpreted as a special case of the work previously done by Hausmann-Knutson \cite{HK}. This description is also useful, since the cutting process sometimes creates more complicated yet interesting examples.

In the second part, we introduce the \emph{connected sum of simplicial complexes} (Section \ref{secondpart}) for general simplicial complexes, abstracting the combinatorial aspect of cutting polytope by a generic hyperplane. Namely, let $K_1$ and $K_2$ be simplicial complexes on $[m]$ and let $Z \subset K_1 \cap K_2$ be a subset. We define the connected sum $K_1~\#^Z K_2$ of $K_1$ and $K_2$ by
\[
K_1~\#^Z K_2:= \Delt_Z(K_1\cup K_2) \ \ \ \ \ \ \ \ \ \ \ \ \ (\mbox{Definitions \ref{defnsimp} and \ref{defn:conn sum}}).
\]
Furthermore, we introduce the \emph{strong} connected sum of $K_1$ and $K_2$ by assuming 
\[
(\star) \ \ \ \ \ \ \ \ \ \ \ \ \ Z=K_1\backslash (\overline{K_1\backslash W}) = K_2\backslash (\overline{K_2\backslash W})
\]
where $W:=K_1\cap K_2$. We show that if $\Delta_+$  and $\Delta_-$ are simple polytopes obtained by cutting a simple polytope $\Delta$ with a generic hyperplane $H_o$, then the simplicial complex $K$ associated to $\Delta$ is a strong connected sum of the simplicial complexes $K_{\pm}$ associated to $\Delta_{\pm}$. Interestingly, $K_-$ is also a strong connected sum of $K_+$ and $K$. 
 
In Section 4,  we show that (Theorem \ref{thm2}) the Stanley-Reisner ring $\ZZ[K_1~\#^Z K_2]$ of a connected sum $K_1~\#^Z K_2$ is the connected sum of Stanley-Reisner rings $\ZZ[K_1]$ and $\ZZ[K_2]$ of $K_1$ and $K_2$ respectively, in the sense of \cite{AAM}. More explicitly, let $\sfg_i: \ZZ[K_i] \to \ZZ[W]$ and $\sff_i: \ZZ[K_1\cup K_2] \to \ZZ[K_i]$ be the natural quotient maps of Stanley-Reisner rings associated to the corresponding inclusions of simplicial complexes. Let $\calI_Z$ be the ideal in $\ZZ[W]$ generated by the monomials corresponding to elements of $Z$. Then 
\[
\ZZ[K_1\sharp^Z K_2] \cong \frac{\ker (\sfg_1 -\sfg_2: \ZZ[K_1] \times \ZZ[K_2] \to \ZZ[W])}{(\sff_1,\sff_2)(\calI_Z)}.
\]
The extra assumption ($\star$) required for the strong connected sum is motivated by the algebraic facts (see Corollary \ref{just}) that if $K_1$ and $K_2$ are Gorenstein and $W$ is Cohen-Macaulay, then the assumption ($\star$) implies that the ideal $\calI_Z$ is a canonical module of $\ZZ[W]$. As a consequence, by the work of \cite{AAM}, we can show purely algebraically that if $K_1~\#^Z K_2$ is a strong connected sum, $K_1$ and $K_2$ are Gorenstein, $W$ is Cohen-Macaulay, then $K_1~\#^Z K_2$ is Gorenstein. 

In the last section, we discuss how these algebraic structures behave if we take the torsion module of the Stanley-Reisner ring. Let $[m]=\{1,\cdots, m\}$  be the common vertex set of $K_1, K_2$ and $K$ so that the corresponding Stanley-Reisner rings are the quotients of $\ZZ[x_1,\cdots,x_m]$ by monomials given by non-faces. Let $B=(B_{ij}) \in \Mat_{n,m}(\ZZ)$ be an integral matrix of rank $n$, then we have a polynomial ring $\ZZ[\underline{u}]:=\ZZ[u_1,\cdots, u_n]$ sitting inside of $=\ZZ[x_1,\cdots,x_m]$ where $u_i=\sum_{j=1}^m B_{ij}$. In Section 4.3, we observe that if $\Tor_1^{\ZZ[\underline{u}]}(\ZZ[L], \ZZ)=0$ for $L=K,K_1,K_2,W$, then $\Tor^{\ZZ[\underline{u}]}_*(\ZZ[K_1\sharp^Z K_2],\ZZ)$ is again a connected sum of the Torsion algebras $\Tor^{\ZZ[\underline{u}]}_*(\ZZ[K_1],\ZZ)$ and $\Tor^{\ZZ[\underline{u}]}_*(\ZZ[K_2],\ZZ)$. Those torsion algebras correspond to the (equivariant) cohomology of moment angle complexes (c.f. \cite{BP}, \cite{LMM}).  The connected sum of simplicial complexes can be used to construct interesting spaces (c.f. \cite{ES}) and  the techniques developed in this paper can be used to compute the (equivariant) cohomological invariants of these spaces.

%We can interpret these results in terms of the (equivariant) cohomology of moment angle complexes. Namely, the Stanley-Reisner ring of a simplicial complex is the equivariant cohomology of the corresponding moment angle complex with respect to the canonical torus action. Taking the torsion module corresponds to considering the equivariant cohomology of the moment angle complex with respect to the action of a subgroup of the canonical torus. The connected sum is a very useful operation on simplicial complexes to create an interesting example. For instance, an example of non-starshaped simplicial complex, constructed by taking a connected sum of two star-shaped simplicial complexes. It has

\subsection*{Acknowledgements}
The authors want to thank M. Franz, T. Holm, Y. Karshon, A. Knutson, T. Ohmoto, K. Ono, D. Suh for important advice and useful conversations. The first author is particularly indebted to K. Ono for providing him an excellent environment at Hokkaido University where he had spent significant time for this paper in July and August 2011. The first author would like to show his 
gratitude to the Algebraic Structure and its Application Research Center (ASARC) at KAIST for its constant support starting 2011 September. The first author is also supported by the National Research Foundation of Korea (NRF) grant funded by the Korea government (MEST) (No. 2012-0000795, 2011-0001181).

%%%%%%%%%%%%%%%%%%%%%%%%%%%%%%%%%%%%%%%%%%

%%%%%%%%%%%%%%%%%%%%%%%%%%%%%%%%%%%%%%%%%%
%%%%%%%%%%%%%%%%%%%%%%%%%%%%%%%%%%%%%%%%%%
%%%%%%%%%%%%%%%%%%%%%%%%%%%%%%%%%%%%%%%%%%

\section{{\bf Symplectic cut of toric orbifolds}}\label{cuttingprocess}

In this section, we will first review the construction of moment angle complexes and their cohomology rings. Then we describe the symplectic cut of a toric
orbifold in terms of moment angle complexes and show the main theorem (Theorem \ref{main}) of the first part of this paper.
%%%%%%%%%%%%%%%%%%%%%%%%%%%%%%%%%%%%%%%%%%
\subsection{Moment Angle Complex} 
%%%%%%%%%%%%%%%%%%%%%%%%%%%%%%%%%%%%%%%%%%
In this section, we review the basic construction of the moment angle complexes for polytopes and general simplicial complexes. For the details, we refer to
\cite{BP} or \cite{Panov10}.
%%%%%%%%%%%%%%%%%%%%%%%%%%%%%%%%%%%%%%%%%%
\begin{defn}[c.f. p.25 \cite{BP}]\label{defnsimp}
A {\bf \emph{simplicial comlex}} on the vertex set $\calS$ is a collection $K$ of subsets (called \emph{faces}) of $\calS$ such that if $\sigma \in K$, then all subsets
including the empty $\varnothing$ of $\sigma$ are in $K$. A simplicial complex $K$ is called \emph{pure} if all its maximal faces have the same dimension where
the dimension of a face $\sigma \in K$ is $|\sigma|-1$. A maximal face is also called a \emph{facet}. The set of all facets is denoted by $\calF(K)$. A vertex $x$ is called a \emph{ghost vertex} if $\{x\} \not\in K$. Let $Z$ be a subset of a simplicial complex $K$ such that $\varnothing \not\in Z$. The \emph{closure} of $Z$ in $K$ is the smallest subcomplex containing $Z$. The \emph{open neighborhood} of $Z$ in $K$ is the set of all $\sigma \in K$ such that $\sigma$ contains some $\tau \in Z$. Note that $O_K(Z) = Z$ if and only if $K\backslash Z$ is a subcomplex of $K$. The \emph{star} of $Z$ in $K$ and the \emph{deletion} of $Z$ from $K$ are the
subcomplexes defined by $\sstar_K(Z):=\overline{O_K(Z)}$ and $\Delt_Z(K):=K \subt O_K(Z)$ respectively. If $K_1$ and $K_2$ are simplicial complexes on the same
vertex set $\calS$, then we can naturally take the intersection $K_1\cap K_2$ and the union $K_1 \cup K_2$ that are also simplicial complexes on $\calS$.
\end{defn}
%%%%%%%%%%%%%%%%%%%%%%%%%%%%%%%%%%%%%%%%%%
\begin{defn}\label{notation} Throughout this paper, we use the following notation for convenience. Let $X$ be a set and $Y, Z$ subsets of $X$. Let $\sigma \subset [m]$ be a subset. Then $Y^{\sigma}\times Z^{[m]\backslash \sigma} \subset X^m$ denotes the direct product of $Y$ and $Z$'s where $i$-th component is $Y$ if $i\in\sigma$ and $Z$ if $i\in[m]\backslash\sigma$.
\end{defn}
%%%%%%%%%%%%%%%%%%%%%%%%%%%%%%%%%%%%%%%%%%
\begin{defn}[Moment Angle Complexes]\label{macDef}
Let $K$ be a simplicial complex on the vertex set $[m]:=\{1,\cdots, m\}$ (with possible ghost vertices).  Define the \emph{moment angle complex} $\calZ_{K,[m]}\subset \CC^m$ by
\[
\calZ_{K,[m]} := \bigcup_{\sigma\in K} \sfD^{\sigma} \times \partial \sfD^{[m]\backslash\sigma} = \bigcup_{\sigma \in
  \calF(K)} \sfD^{\sigma} \times \partial \sfD^{[m]\backslash\sigma}
\]
where $\sfD=\{z\in\CC\ | \ |z| \leq 1\}$ and $\partial \sfD = \{z\in \CC \ | \ |z|=1\}$. The standard action of $\sfT:=\U(1)^m$ on $\CC^m$ can be restricted to the one on $\calZ_{K,[m]}$. \end{defn}
%%%%%%%%%%%%%%%%%%%%%%%%%%%%%%%%%%%%%%%%%%
\begin{defn}[Moment Angle Manifolds]\label{MAM}
Let $\Delta$ be a rational $n$-dimensional simple polytope in $\RR^n$ given by the inequalities:
\begin{equation}\label{inequality}
\Delta = \{\vec{x} \in \RR^n \ |\ \lan\vec{x},\lambda_i\ran + \eta_i \geq 0, i=1,\cdots,m\}, \ \ \lambda_i \in \ZZ^n, \eta_i \in \ZZ
\end{equation}
We allow this description to be ``reducible", i.e. some of the inequalities may be redundant. Or equivalently, let $H_i:=\Delta \cap \{\lan\vec{x},\lambda_i\ran +
\eta_i = 0\}$ and $H_i$ is a facet or empty. We call such an empty $H_i$ a \emph{ghost facet}.  The associated simplicial complex $K_{\Delta,[m]}$ is a
simplicial complex on $[m]$ and $\sigma \in K_{\Delta,[m]}$ if and only if $\cap_{i\in\sigma} H_i \not=\varnothing$.  Here a ghost facet corresponds to a ghost
vertex. Let $B:=[\lambda_1,\cdots,\lambda_m]$ and $\eta=(\eta_1,\cdots,\eta_m)$ and define an affine embedding $\iota_{B,\eta}: \RR^n \to \RR^m$ by
\begin{equation}\label{iota}
\iota_{B,\eta}:=B^*(\vec{x}) + \eta. 
\end{equation}
Define the \emph{moment angle manifold} $\calZ_{\Delta, B,\eta}$ for $\Delta$ given in \eqref{inequality} by the following fiber diagram:
\[
\xymatrix{
\calZ_{\Delta,B,\eta} \ar[r]^{\subset} \ar[d]& \CC^m \ar[d]_{\mu_{\sfT}}\\
\Delta \ar[r]_{\iota_{B,\eta}|_{\Delta}} & \RR^m
}
\]
where $\mu_{\sfT}(\vec{z})=(|z_1|^2,\cdots, |z_m|^2)$ is the standard moment map of the action of $\sfT:=\U(1)^m$ on $\CC^m$. It is indeed a smooth manifold
(Construction 6.8 and Lemma 6.2 \cite{BP}) and the standard $\sfT$-action on $\CC^m$ can be restricted to a $\sfT$-action on $\calZ_{\Delta, B,\eta}$. 

It is also possible to define $\calZ_{\Delta, B,\eta}$ as a quotient space. Namely, let $\sfT_{\sigma}:=U(1)^{\sigma}\times \{1\}^{[m]\backslash \sigma} \subset \sfT$ for a subset $\sigma \subset [m]$. Then there is a $\sfT$-equivariant homeomorphism $\calZ_{\Delta, B,\eta} \cong (\sfT \times \Delta)/\!\!\sim$, where $(t,p) \sim (s,q)$ if and only if $p=q$ and $ts^{-1} \in \sfT_{\sigma}$ with $p\in \cap_{i\in\sigma} H_i$.
\end{defn}
 %%%%%%%%%%%%%%%%%%%%%%%%%%%%%%%%%%%%%%%%%%
\begin{rem}[II.1 \cite{Panov10} or Section 6.1 \cite{BP}]\label{cube}
There is a $\sfT$-equivariant homeomorphism
\begin{equation}\label{Theta}
\Theta_{\Delta, B,\eta}: \calZ_{\Delta,B,\eta} \cong \calZ_{K_{\Delta},[m]}.
\end{equation}
Namely, consider a cubical subdivision of $\Delta$ defined in Construction 4.5 \cite{BP} and the corresponding decomposition of $\calZ_{\Delta, B,\eta}$:
\[
\Delta = \bigcup_{\sigma \in \calF(K_{\Delta})} C_{\sigma},\ \ \ \ \ \ \  \calZ_{\Delta, B,\eta} = \bigcup_{\sigma \in \calF(K_{\Delta})} B_{\sigma}.
\]
where $B_{\sigma}:=\mu_{\sfT}^{-1}(\iota_{B,\eta}(C_{\sigma}))$. Each $B_{\sigma}$ is $\sfT$-equivariantly homeomorphic to $\sfD^{\sigma} \times (\partial
\sfD)^{[m]\backslash \sigma}$ and these homeomorphisms are patched together to define $\Theta_{\Delta, B,\eta}$.
\end{rem}
%%%%%%%%%%%%%%%%%%%%%%%%%%%%%%%%%%%%%%%%%%
\begin{rem} We describe the parts of $\calZ_{K_{\Delta},[m]}$ corresponding to a vertex and a facet of $\Delta$ through $\Theta_{\Delta, B,\eta}$. For $\sigma \in \calF(K_{\Delta})$, let $v:=\cap_{i\in \sigma} H_i$ be a vertex of $\Delta$. Then 
\[
\Theta_{\Delta,B,\eta}(\mu_{\sfT}^{-1}(\iota_{B,\eta}(v))) = \{0\}^{\sigma} \times (\partial \sfD)^{[m]\backslash\sigma}.
\]
For a facet $H_i$ of $\Delta$, we have
\[
\Theta_{\Delta,B,\eta}(\mu_{\sfT}^{-1}(\iota_{B,\eta}(H_i))) = \bigcup_{i \in \sigma \in \calF(K_{\Delta})} \{0\}^{\{i\}} \times \sfD^{\sigma\backslash\{i\}} \times (\partial \sfD)^{[m]\backslash\sigma}.
\]
\end{rem}
%%%%%%%%%%%%%%%%%%%%%%%%%%%%%%%%%%%%%%%%%%
\begin{defn}
For a simplicial complex $K$ on $[m]$, the \emph{Stanley-Reisner ring} is defined by
\[
\ZZ[K] := \frac{\ZZ[x_1,\cdots, x_m]}{\lan x_{\sigma}, \sigma \not\in K \ran}
\]
where $x_{\sigma} := \prod_{i\in\sigma} x_i$. We identify $\ZZ[x_1,\cdots,x_m]$ with the cohomology of the classifying space of $\sfT$, $\ZZ[\sfT^*]:=H^*(B\sfT,\ZZ)$. Therefore we set $\deg x_i := 2$.
\end{defn}
%%%%%%%%%%%%%%%%%%%%%%%%%%%%%%%%%%%%%%%%%%
The basic fact about the $\sfT$-equivariant cohomology ring of $\calZ_{K,[m]}$ is
\begin{thm}[Davis-Januszkiewicz \cite{DavisJanuszkiewicz91}]\label{DJ}
There is an isomorphism of graded rings $\ZZ[K] \cong H_{\sfT}^*(\calZ_{K,[m]};\ZZ)$. This isomorphism is natural in a sense that, for a subcomplex $W\subset K$, we have the commutative diagram of short exact sequences
\[
\xymatrix{
0\ar[r]& \calI_{K\backslash W} \ar[r]\ar[d]_{\cong}& \ZZ[K] \ar[d]_{\cong} \ar[r]& \ZZ[W] \ar[d]_{\cong} \ar[r] & 0\\
0\ar[r] & H_{\sfT}^*(\calZ_{K,[m]}, \calZ_{W,[m]};\ZZ)\ar[r] & H_{\sfT}^*(\calZ_{K,[m]};\ZZ) \ar[r] & H_{\sfT}^*(\calZ_{W,[m]};\ZZ) \ar[r] & 0
}
\]
where $\calI_{K\backslash W}$ is the ideal in $\ZZ[K]$ generated by monomials $x_{\sigma}, \sigma \in K\backslash W$ and $H_{\sfT}^*(\calZ_{K,[m]}, \calZ_{W,[m]};\ZZ)$ is the relative equivariant cohomology for $\calZ_{W,[m]} \subset \calZ_{K,[m]}$ . The vertical isomorphism on the left is induced from the other two isomorphisms and the short exactness of rows.
\end{thm}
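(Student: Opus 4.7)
The plan is to prove the isomorphism by identifying the Borel construction $E\sfT \times_{\sfT}\calZ_{K,[m]}$ with the Davis-Januszkiewicz polyhedral product space, compute the cohomology of that space directly, and then deduce the naturality diagram from functoriality together with a surjectivity check.

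For the main isomorphism, first I would identify
\[
E\sfT \times_{\sfT} \calZ_{K,[m]} \simeq DJ(K) := \bigcup_{\sigma \in K} (\CP^\infty)^{\sigma}\times \{*\}^{[m]\backslash \sigma} \subset (\CP^\infty)^{m}
\]
by a piecewise homotopy argument: $E\U(1)\times_{\U(1)}\sfD \simeq \CP^\infty$ while $E\U(1)\times_{\U(1)}\partial\sfD \simeq *$, so the Borelization of each cell $\sfD^{\sigma}\times \partial\sfD^{[m]\backslash \sigma}$ is homotopy equivalent to $(\CP^\infty)^{\sigma}\times \{*\}^{[m]\backslash\sigma}$, and these glue over the face poset of $K$ into the polyhedral product $DJ(K)$. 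The inclusion $DJ(K)\inc (\CP^\infty)^{m}$ then induces a restriction map $\ZZ[x_1,\dots,x_m] = H^*((\CP^\infty)^{m};\ZZ) \to H^*(DJ(K);\ZZ)$. A Mayer-Vietoris induction over the facets of $K$, combined with the observation that for every minimal non-face $\sigma \notin K$ the class $x_\sigma$ vanishes on $DJ(K)$ (since the corresponding product factor is collapsed to a point), shows this map is surjective with kernel precisely the Stanley-Reisner ideal. This yields the graded ring isomorphism $\ZZ[K]\cong H^*_{\sfT}(\calZ_{K,[m]};\ZZ)$.

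For naturality, an inclusion of subcomplexes $W\subset K$ induces $\sfT$-equivariantly $\calZ_{W,[m]}\inc \calZ_{K,[m]}$ and correspondingly $DJ(W)\inc DJ(K)$. Under the identification above, the pullback $H^*_{\sfT}(\calZ_{K,[m]};\ZZ)\to H^*_{\sfT}(\calZ_{W,[m]};\ZZ)$ agrees with the natural quotient $\ZZ[K]\surj \ZZ[W]$ (both are identities on the generators $x_i$), and in particular is surjective. Consequently the long exact sequence of the pair $(\calZ_{K,[m]},\calZ_{W,[m]})$ in $\sfT$-equivariant cohomology collapses into a short exact sequence, and the five lemma applied to the resulting commutative diagram identifies the relative term $H^*_{\sfT}(\calZ_{K,[m]},\calZ_{W,[m]};\ZZ)$ with $\calI_{K\backslash W} = \ker(\ZZ[K]\surj \ZZ[W])$, giving the stated diagram.

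The main obstacle is the Mayer-Vietoris computation that $H^*(DJ(K);\ZZ)\cong \ZZ[K]$ as graded rings. The additive identification follows from a spectral sequence that collapses at $E_2$, but one must additionally verify that no hidden extensions occur and that the multiplicative structure matches; this is classical (one either inducts on the number of maximal simplices using that the restriction $H^*((\CP^\infty)^m)\to H^*(DJ(K))$ is already a ring map, or invokes formality of $DJ(K)$), but it is the step that does genuine topological work, while everything else in the theorem is a formal consequence via functoriality of the Borel construction.
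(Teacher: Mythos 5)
The paper does not actually prove Theorem \ref{DJ}; it cites Davis--Januszkiewicz and uses the result as a black box, so there is no in-paper argument to compare yours against. Your sketch is the standard derivation from Buchstaber--Panov: identify the Borel construction $E\sfT\times_{\sfT}\calZ_{K,[m]}$ with the polyhedral product $DJ(K)$, compute $H^*(DJ(K);\ZZ)\cong\ZZ[K]$ via restriction from $(\CP^\infty)^m$ and a Mayer--Vietoris induction on facets, and then deduce the naturality diagram from the observation that the pullback $\ZZ[K]\to\ZZ[W]$ is surjective (because the Stanley--Reisner ideal of $K$ sits inside that of any subcomplex $W$), so the long exact sequence of the pair $(\calZ_{K,[m]},\calZ_{W,[m]})$ degenerates to a short exact sequence. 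This is correct in outline, and you rightly flag the place where the genuine work is buried: the compatibility of the local Borel-construction equivalences over the face poset (so that they assemble to a global equivalence with $DJ(K)$), and the multiplicative identification of the target with $\ZZ[K]$. The colimit step works because the diagram over the face poset is one of closed cofibrations and the Borel functor preserves such colimits up to homotopy; the multiplicativity is for free once one notes, as you do, that everything sits under the ring map from $H^*((\CP^\infty)^m;\ZZ)=\ZZ[x_1,\dots,x_m]$.
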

%%%%%%%%%%%%%%%%%%%%%%%%%%%%%%%%%%%%%%%%%%
%%%%%%%%%%%%%%%%%%%%%%%%%%%%%%%%%%%%%%%%%%
\subsection{Symplectic Cutting of a Toric Orbifold} In this section, to fixed the notation, we recall the construction of toric orbifolds from labeled polytopes \cite{LT} and the symplectic cut \cite{Lerman} applied to a toric orbifold.
%%%%%%%%%%%%%%%%%%%%%%%%%%%%%%%%%%%%%%%%%%

A \emph{labeled polytope} $(\Delta, \sfb)$ is an $n$-dimensional rational simple polytope $\Delta$ in $\RR^n$ where each facet $H_i, i=1,\cdots,m$ is labeled by a positive integer $\sfb_i$. Here, we assume that the $H_i$ are not ghost facets. Let $\sfT:=U(1)^m$ and $\sfR:=U(1)^n$ and $\frakt$ and $\frakr$ their Lie algebras. We identify $\frakt^*=\RR^m$ and $\frakr^*=\RR^n$. Suppose that $\Delta$ is described as 
\begin{eqnarray}\label{poly}
\Delta=\{\vec{x} \in \frakr^* \ |\ \lan \sfb_i\beta_i,\vec{x}\ran + \eta_i \geq 0, i=1, \cdots, m\}
\end{eqnarray}
where $\beta_i$ is the primitive inward normal vector to each facet $H_i$. We regard $\eta:=(\eta_1, \cdots, \eta_m)$ is an element of $\frakt^*$. Let $B$ be the integer $n\times m$ matrix defined by $B:=[\sfb_1\beta_1,\cdots, \sfb_m\beta_m]$ and regard it as the linear map $B:\frakt \to\frakr$ and also as the induced map on tori $B:\sfT \to\sfR$. The surjectivity of $B:\sfT\to\sfR$ follows from the simplicity of $\Delta$. The kernel $\sfG$ of $B: \sfT \to \sfR$ is connected if and only if $B:\ZZ^m \to \ZZ^n$ is surjective. Let $A:\sfG \to \sfT$ be the inclusion and let $A:\frakg \to \frakt$ be the induced map on the Lie algebras ($A^*:\frakt^* \to\frakg^*$).

The \emph{symplectic toric (effective) orbifold} $\calX$ for $(\Delta,\sfb)$ is given by reducing $\CC^m$ by the standard action of $\sfG$ at the regular value $A^*(\eta)$. Namely, if $\mu_{\sfT}:\CC^m \to \frakt^*$ is the standard moment map, then the moment map for the $\sfG$-action on $\CC^m$ is given by $\mu_{\sfG}:=A^*\circ \mu_{\sfT}$ and $\calX$ is defined as a quotient stack
\[
\calX := [M/\sfG], \ \ \ \ \mbox{where}\ \ \ M:=\mu_{\sfG}^{-1}(A^*(\eta)).
\]
Using the affine embedding $\iota_{B,\eta}: \frakr^* \to \frakt^*$ defined at (\ref{iota}), the moment map $\mu_{\sfR}$ for the residual $\sfR$-action on $\calX$ is given by $\mu_{\sfR}:M \stackrel{\mu_{\sfT}}{\longrightarrow} \iota_{B,\eta}(\frakr^*) \stackrel{\iota_{B,\eta}^{-1}}{\longrightarrow} \frakr^*$. Note that $\mu_{\sfT}^{-1}(\iota_{B,\eta}(\Delta))= M$ since $(A^*)^{-1}(\eta) = \iota_{B,\eta}(\frakr^*)$ and $\iota_{B,\eta}(\Delta) = \iota_{B,\eta}(\frakr^*) \cap \frakt^*_{\geq 0}$ where $\frakt^*_{\geq 0} := \mu_{\sfT}(\CC^m)$.

The symplectic cut of $\calX$ with respect to the action of a 1-dimensional subtorus $\sfL \subset \sfR$ produces two toric orbifolds $\calX_+$ and $\calX_-$ with corresponding polytopes $\Delta_+$ and $\Delta_-$ that are obtained by cutting the polytope $\Delta$ by a generic rational hyperplane
$\calH$. Let $\gamma \in \frakr$ be an integral primitive normal vector to $\calH$ and find $\xi \in \ZZ$ to write
\begin{eqnarray*}
\calH &=&\left\{ \vec{x} \in \frakr^* \ |\  \lan \gamma, \vec{x} \ran + \xi = 0\right \}\\
\Delta_+ &=& \left\{ \vec{x} \in \frakr^* \ |\  \lan \gamma, \vec{x} \ran + \xi \geq 0\right \} \cap \Delta\\
\Delta_-  &=& \left\{ \vec{x} \in \frakr^* \ |\  \lan \gamma, \vec{x} \ran + \xi  \leq 0\right\} \cap \Delta.
\end{eqnarray*}
The element $\gamma\in\frakr$ defines $1$-dimensional subtorus $\sfL:=\RR\gamma/\ZZ\gamma \subset \sfR$ and its Lie algebra $\frakl := \RR\gamma \subset \frakr$. With the natural identification $\frakl =\RR$, let $\mu: \CC \to \frakl^*$ be the standard moment map $w \mapsto |w|^2$ and let $\overline \mu: \overline \CC \to\frakl^*\ \ (w \mapsto -|w|^2)$ be the moment map for the standard $\sfL$-action on $\CC$ with the opposite symplectic structure. The \emph{symplectic cut} is to reduce $\calX \times \CC$ and $\calX \times \overline \CC$ with respect to the anti-diagonal action of $\sfL$ at the regular value $-\xi$. Namely, let $\sfd: \sfL \inc \sfR \times \sfL$ be the anti-diagonal map sending $l\mapsto (l,l^{-1})$  and consider the moment map
\begin{eqnarray*}
\varphi_+: M \times \CC \stackrel{(\mu_{\sfR},\mu)}{\longrightarrow} \frakr^* \oplus \frakl^* \stackrel{\sfd^*}{\longrightarrow} \frakl^* && (\vec{z},w) \mapsto \mu_{\sfL}(\vec{z}) - |w|^2\\
\varphi_-: M \times \overline \CC \stackrel{(\mu_{\sfR},\overline \mu)}{\longrightarrow} \frakr^* \oplus \frakl^* \stackrel{\sfd^*}{\longrightarrow} \frakl^* && (\vec{z},w) \mapsto \mu_{\sfL}(\vec{z}) + |w|^2.
\end{eqnarray*}
Then $-\xi$ is a regular value for both $\varphi_+$ and $\varphi_-$. Thus we define
\[
M_+:= \varphi_+^{-1}(-\xi), \ \ M_-:= \varphi_-^{-1}(-\xi) \ \  \mbox{ and }\ \ 
\calX_+:=[M_+/\tilde{\sfG}], \ \ \calX_-:=[M_-/\tilde{\sfG}], 
\]
where $\tilde{\sfG}$ is the preimage of $\sfd(\sfL) \subset \sfR \times \sfL$ under the map $(B,\id): \sfT \times \sfL \to \sfR \times \sfL$.

Let $\alpha:\sfR\times \sfL \to \sfR$ be defined by $\alpha(r,l):=rl$ so that $\ker \alpha = \im\ \sfd$. Define an affine embedding $\iota_{\alpha,\xi}: \frakr^* \to \frakr^*\oplus \frakl^*$ by $\iota_{\alpha,\xi}(\vec{x}):=\alpha^*(\vec{x}) + (\vec{0},\xi) = (\vec{x}, \lan \vec{x},\gamma\ran +\xi)$ so that $\iota_{\alpha,\xi}(\frakr^*) = (\sfd^*)^{-1}(-\xi)$. Then we have 
\[
M_+=(\mu_{\sfR},\mu)^{-1}(\iota_{\alpha,\xi}(\Delta_+)) \ \ \mbox{ and } \ \ M_-=(\mu_{\sfR},\overline\mu)^{-1}(\iota_{\alpha,\xi}(\Delta_-)).
\]
Thus the moment map for the $\sfR$-action on $\calX_+$ and $\calX_-$ are given by
\[
\mu_{\sfR,+}:  M_+ \stackrel{(\mu_{\sfR},\mu)}{\longrightarrow}   \iota_{\alpha,\xi}(\frakr^*) \stackrel{\iota_{\alpha,\xi}^{-1}}{\longrightarrow}
                           \frakr^* \ \ \mbox{ and } \ \ \mu_{\sfR,-}:  M_- \stackrel{(\mu_{\sfR},\overline \mu)}{\longrightarrow}
                           \iota_{\alpha,\xi}(\frakr^*) \stackrel{\iota_{\alpha,\xi}^{-1}}{\longrightarrow} \frakr^*.
\]
%%%%%%%%%%%%%%%%%%%%%%%%%%%%%%%%%%%%%%%%%%%%%%%%%%%
\subsection{$M_{\pm}$ as Quotients of Moment Angle Complexes by $\tilde{\sfG}$}
We use the notation from the previous section. Consider the integral $n\times (m+1)$ matrix $\tilde{B}:=[\sfb_1\beta_1,\cdots, \sfb_m\beta_m, \gamma]$ regarded as a map of tori $\tilde{B}:\sfT\times\sfL \to \sfR$. Then we have the commutative diagram of surjective maps
\[
\xymatrix{
\sfT \times\sfL \ar[dr]_{(B,\id)} \ar[rr]^{\tilde{B}} & & \sfR \\
 & \sfR \times \sfL \ar[ur]_\alpha &
}
\]
Since $\ker \alpha = \sfd(\sfL)$, we have $\ker \tilde{B} = \tilde{\sfG}$. Let $\tilde{A}: \tilde{\sfG} \to \sfT\times \sfL$ be the inclusion. We also denote the map on Lie algebras by $\tilde{A}:\tilde{\frakg} \to \frakt\oplus\frakl$.
%%%%%%%%%%%%%%%%%%%%%%%%%%%%%%%%%%%%%%%%%%%%%%%%%%%
\begin{lem}
$\calX_+$ and $\calX_-$ are obtained by reducing $\CC^m\times \CC$ and $\CC^m\times\overline\CC$ by the action of $\tilde{\sfG}$ at the regular value
  $\tilde{A}^*(\tilde{\eta}) \in \tilde{\frakg}$ where $\tilde{\eta}=(\eta_1,\cdots,\eta_m,\xi)$. More precisely, consider the moment maps
\[
\mu_{\tilde{\sfG},+}: \CC^m\times \CC \stackrel{(\mu_{\sfT},\mu)}{\longrightarrow} \frakt^*\oplus  \frakl^* \stackrel{\tilde{A}^*}{\to} \tilde{\frakg}^* \ \ \mbox{ and } \ \ \mu_{\tilde{\sfG},-}: \CC^m\times \overline \CC \stackrel{(\mu_{\sfT},\overline\mu)}{\longrightarrow} \frakt^*\oplus \frakl^* \stackrel{\tilde{A}^*}{\to} \tilde{\frakg}^*.
\]
Then we have
\[
M_+= \mu_{\tilde{\sfG},+}^{-1}(\tilde{A}^*(\tilde{\eta})) \ \ \mbox{ and } \ \ M_-= \mu_{\tilde{\sfG},-}^{-1}(\tilde{A}^*(\tilde{\eta})).
\]
\end{lem}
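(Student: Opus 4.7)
The plan is to unwind both sides as pre-images of affine subspaces of $\frakt^*\oplus\frakl^*$ and match them using the short exact sequence $1\to\tilde{\sfG}\xrightarrow{\tilde{A}}\sfT\times\sfL\xrightarrow{\tilde{B}}\sfR\to 1$ obtained from the commutative triangle just before the lemma. Dualizing on Lie algebras yields $\ker\tilde{A}^*=\im\tilde{B}^*$, and from the explicit factorization $\tilde{B}=\alpha\circ(B,\id)$ together with the formula $\alpha^*(\vec{x})=(\vec{x},\langle\vec{x},\gamma\rangle)$ read off from the definition of $\iota_{\alpha,\xi}$, one computes
\[
\tilde{B}^*(\vec{x})=\bigl(B^*(\vec{x}),\langle\vec{x},\gamma\rangle\bigr).
\]
Thus $\mu_{\tilde{\sfG},+}^{-1}(\tilde{A}^*(\tilde\eta))$ is the set of $(\vec{z},w)\in\CC^m\times\CC$ such that $(\mu_{\sfT}(\vec{z})-\eta,\ \mu(w)-\xi)$ lies in $\im\tilde{B}^*$, i.e.\ equals $(B^*(\vec{x}),\langle\vec{x},\gamma\rangle)$ for some $\vec{x}\in\frakr^*$.

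Next, I would prove the forward inclusion $M_+\subset\mu_{\tilde{\sfG},+}^{-1}(\tilde{A}^*(\tilde\eta))$. If $(\vec{z},w)\in M_+$, then $\vec{z}\in M$ forces $\mu_{\sfT}(\vec{z})-\eta\in\ker A^*=\im B^*$; the unique element $\vec{x}\in\frakr^*$ with $B^*(\vec{x})=\mu_{\sfT}(\vec{z})-\eta$ is precisely $\mu_{\sfR}(\vec{z})$ by the construction of the residual moment map via $\iota_{B,\eta}^{-1}$. The relation $\mu_{\sfL}=\langle\mu_{\sfR}(\cdot),\gamma\rangle$ (since $\sfL=\RR\gamma/\ZZ\gamma\subset\sfR$) then rewrites the defining equation $\varphi_+(\vec{z},w)=\mu_{\sfL}(\vec{z})-|w|^2=-\xi$ as $\mu(w)-\xi=\langle\vec{x},\gamma\rangle$. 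Combined with $B^*(\vec{x})=\mu_{\sfT}(\vec{z})-\eta$, this is exactly the condition $(\mu_{\sfT}(\vec{z})-\eta,\mu(w)-\xi)=\tilde{B}^*(\vec{x})\in\im\tilde{B}^*=\ker\tilde{A}^*$.

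For the reverse inclusion, given $(\vec{z},w)$ with $\tilde{A}^*(\mu_{\sfT}(\vec{z}),\mu(w))=\tilde{A}^*(\tilde\eta)$, I would extract $\vec{x}\in\frakr^*$ with $\mu_{\sfT}(\vec{z})=\iota_{B,\eta}(\vec{x})$ and $|w|^2=\xi+\langle\vec{x},\gamma\rangle$. The first equation places $\vec{z}$ in $M$ with $\mu_{\sfR}(\vec{z})=\vec{x}$, and the second rewrites as $\varphi_+(\vec{z},w)=-\xi$, giving $(\vec{z},w)\in M_+$. The argument for $M_-$ is identical once $\mu$ is replaced by $\overline\mu$ and the sign of $|w|^2$ is flipped; the very same $\tilde{B}^*(\vec{x})$ matches the modified defining equation $\mu_{\sfL}(\vec{z})+|w|^2=-\xi$ because $\overline\mu(w)=-|w|^2$.

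The only real subtlety — and the one step to pin down carefully — is the identification $\im\tilde{B}^*=\ker\tilde{A}^*$ together with the explicit description of $\tilde{B}^*$; once these are in hand the rest is bookkeeping with the moment-map factorizations already established earlier in the section. No further geometric input is needed, so I do not anticipate a serious obstacle beyond keeping the dualities $\alpha,(B,\id),\tilde{B},A,\tilde{A}$ consistently oriented.
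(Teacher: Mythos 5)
Your argument is essentially the paper's proof unwound element by element: both rely on the identification $(\tilde{A}^*)^{-1}(\tilde{A}^*(\tilde\eta)) = \tilde{B}^*(\frakr^*) + \tilde\eta = \iota_{\tilde{B},\tilde\eta}(\frakr^*)$ coming from $\ker\tilde{A}^* = \im\tilde{B}^*$, and on the factorization $\tilde{B}^*(\vec{x}) = (B^*(\vec{x}), \langle\vec{x},\gamma\rangle)$, i.e.\ $\iota_{\tilde{B},\tilde\eta} = (\iota_{B,\eta},\id)\circ\iota_{\alpha,\xi}$. The paper packages these facts as a composition of two fiber squares whereas you verify the two set inclusions directly, but the mathematical content is the same and both are correct.
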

%%%%%%%%%%%%%%%%%%%%%%%%%%%%%%%%%%%%%%%%%%%%%%%%%%%
\begin{proof}
Define the affine embedding $\iota_{\tilde{B},\tilde{\eta}}: \frakr^* \to \frakt^* \oplus\frakl^*$ by $\iota_{\tilde{B},\tilde{\eta}}(\vec{x}) := \tilde{B}^*(\vec{x}) + \tilde{\eta}$ similarly as in (\ref{iota}) so that $(\tilde{A}^*)^{-1}(\tilde{A}^*(\tilde{\eta})) = \iota_{\tilde{B},\tilde{\eta}}(\frakr^*)$. We
observe that $\iota_{\tilde{B},\tilde{\eta}} = (\iota_{B,\eta}, \id) \circ \iota_{\alpha,\xi}$. Indeed,
\[
\iota_{\tilde{B},\tilde{\eta}}(\vec{x}) = \tilde{B}^*(\vec{x}) +\tilde{\eta} = (B^*(\vec{x}) +\eta, \lan \vec{x}, \gamma\ran + \xi)
                                        = (\iota_{B,\eta}(\vec{x}), \lan \vec{x}, \gamma\ran + \xi) =(\iota_{B,\eta}, \id) \circ \iota_{\alpha,\xi}(\vec{x}).
\] 
Now consider the fiber diagrams:
\[
\xymatrix{
M_+\ar[d]^{\mu_{\sfR,+}} \ar[r]_{\subset\ \ \ \ \ \ }& M_{\Delta,\sfb} \times \CC\ar[d]_{(\mu_{\sfR},\mu)}\ar[r]_{\subset}& \CC^m\times \CC\ar[d]_{(\mu_{\sfT},\mu)}\\
\frakr^* \ar[r]_{\iota_{\alpha,\xi}} \ar@/_1.5pc/[rr]_{\iota_{\tilde{B},\tilde{\eta}}}& \frakr^*\oplus \frakl^* \ar[r]_{(\iota_{B,\tilde{\eta}},\id)} &\frakt^* \oplus \frakl^*
}
 \ \ \ \ \mbox{ and }\ \ \ \   
\xymatrix{
M_-\ar[d]^{\mu_{\sfR,-} }\ar[r]_{\subset\ \ \ \ \ \ }& M_{\Delta,\sfb} \times \overline\CC\ar[d]_{(\mu_{\sfR},\overline\mu)}\ar[r]_{\subset}& \CC^m\times \overline\CC\ar[d]_{(\mu_{\sfT},\overline\mu)}\\
\frakr^* \ar[r]_{\iota_{\alpha,\xi}} \ar@/_1.5pc/[rr]_{\iota_{\tilde{B},\tilde{\eta}}}& \frakr^*\oplus \frakl^* \ar[r]_{(\iota_{B,\tilde{\eta}},\id)} &\frakt^* \oplus \frakl^*
}
\]
Since the outer circuit of each diagram is also a fiber diagram, we obtain $M_+=(\mu_{\sfT},\mu)^{-1}( \iota_{\tilde{B},\tilde{\eta}}(\frakr^*)) =
\mu_{\tilde{\sfG},+}^{-1}(\tilde{A}^*(\tilde{\eta}))$ and $M_-=(\mu_{\sfT},\overline\mu)^{-1}( \iota_{\tilde{B},\tilde{\eta}}(\frakr^*)) =
\mu_{\tilde{\sfG},-}^{-1}(\tilde{A}^*(\tilde{\eta}))$.
\end{proof}
%%%%%%%%%%%%%%%%%%%%%%%%%%%%%%%%%%%%%%%%%%
Let $K_+$ and $K_-$ be the simplicial complexed associated to $\Delta_+$ and $\Delta_-$ respectively. Here the common vertex set of $K_{\pm}$ is $\widetilde{[m]}:=[m] \cup \{o\}$. 
%%%%%%%%%%%%%%%%%%%%%%%%%%%%%%%%%%%%%%%%%%
\begin{cor}\label{+} Since $\tilde{B}$ and $\tilde{\eta}$ defines $\Delta_+$ as in (\ref{poly}), we have $M_+= \calZ_{\Delta_+, \tilde{B},\tilde{\eta}}$ as in Definition \ref{MAM}. Therefore there is a $\sfT\times\sfL$-equivariant homeomorphism $\Theta_{\Delta_+, \tilde{B},\eta}: M_+ \to \calZ_{K_+,\widetilde{[m]}}$ defined at (\ref{Theta}).
\end{cor}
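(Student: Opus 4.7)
The plan is to verify that the data $(\tilde{B},\tilde{\eta})$ presents $\Delta_+$ as an $n$-dimensional rational simple polytope in the form (\ref{inequality}), and then to unwind the previous lemma to recognize $M_+$ as the fibered pullback defining $\calZ_{\Delta_+,\tilde{B},\tilde{\eta}}$. Once this identification is made, the homeomorphism $\Theta_{\Delta_+,\tilde{B},\tilde{\eta}}$ with $\calZ_{K_+,\widetilde{[m]}}$ is immediate from Remark~\ref{cube}.

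First I would verify the polytope description: by construction, $\Delta_+ = \Delta \cap \{\vec x \mid \langle \gamma,\vec x\rangle + \xi \geq 0\}$, which using (\ref{poly}) is precisely
\[
\Delta_+ = \{\vec x \in \frakr^* \mid \langle \sfb_i\beta_i,\vec x\rangle + \eta_i \geq 0,\ i=1,\ldots,m;\ \langle \gamma,\vec x\rangle + \xi \geq 0\}.
\]
This matches (\ref{inequality}) with the column vectors of $\tilde{B} = [\sfb_1\beta_1,\ldots,\sfb_m\beta_m,\gamma]$ playing the role of the $\lambda_i$ and $\tilde{\eta}=(\eta_1,\ldots,\eta_m,\xi)$ playing the role of $\eta$. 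Some inequalities may be redundant (the new facet $H_o = \Delta \cap \calH$ could intersect fewer than $m$ of the original facets), so the presentation is allowed to be reducible in the sense of Definition~\ref{MAM}, which is why the vertex set is $\widetilde{[m]}=[m]\cup\{o\}$ and certain indices may correspond to ghost facets.

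Next I would chase the fiber diagram. By Definition~\ref{MAM} applied to $\Delta_+$ with data $(\tilde{B},\tilde{\eta})$, the manifold $\calZ_{\Delta_+,\tilde{B},\tilde{\eta}}$ is the pullback
\[
\calZ_{\Delta_+,\tilde{B},\tilde{\eta}} = (\mu_{\sfT},\mu)^{-1}\bigl(\iota_{\tilde{B},\tilde{\eta}}(\Delta_+)\bigr) \subset \CC^m \times \CC,
\]
where $\sfT \times \sfL \cong \U(1)^{m+1}$ acts in the standard way. On the other hand, the previous lemma identifies $M_+ = \mu_{\tilde{\sfG},+}^{-1}(\tilde{A}^*(\tilde{\eta}))$, and the factorization $\iota_{\tilde{B},\tilde{\eta}} = (\iota_{B,\eta},\id)\circ \iota_{\alpha,\xi}$ established in that lemma gives $(\tilde{A}^*)^{-1}(\tilde{A}^*(\tilde{\eta})) = \iota_{\tilde{B},\tilde{\eta}}(\frakr^*)$. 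Intersecting with the image of $\Delta_+$ under $\iota_{\tilde{B},\tilde{\eta}}$ and using $\iota_{\tilde{B},\tilde{\eta}}(\Delta_+) = \iota_{\tilde{B},\tilde{\eta}}(\frakr^*)\cap \frakt^*_{\geq 0}\oplus\frakl^*_{\geq 0}$ (since the positivity of $|z|^2$ and $|w|^2$ cuts out $\Delta_+$ from $\frakr^*$), we conclude $M_+ = (\mu_{\sfT},\mu)^{-1}(\iota_{\tilde{B},\tilde{\eta}}(\Delta_+)) = \calZ_{\Delta_+,\tilde{B},\tilde{\eta}}$ as $\sfT \times \sfL$-spaces.

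Finally, having matched $M_+$ with the moment angle manifold for $(\Delta_+,\tilde{B},\tilde{\eta})$, Remark~\ref{cube} supplies the $\sfT\times\sfL$-equivariant homeomorphism $\Theta_{\Delta_+,\tilde{B},\tilde{\eta}}: \calZ_{\Delta_+,\tilde{B},\tilde{\eta}} \to \calZ_{K_+,\widetilde{[m]}}$, completing the proof. The only non-routine step is the compatibility check that the image $\iota_{\tilde{B},\tilde{\eta}}(\Delta_+)$ coincides with the slice of the moment cone $\frakt^*_{\geq 0}\oplus\frakl^*_{\geq 0}$ cut out by $(\tilde{A}^*)^{-1}(\tilde{A}^*(\tilde{\eta}))$; this is what allows the outer rectangles of the fiber diagrams in the previous lemma to be reinterpreted as the defining diagram of $\calZ_{\Delta_+,\tilde{B},\tilde{\eta}}$.
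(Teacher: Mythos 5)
Your proposal is correct and takes essentially the same approach the paper intends: identify the presentation $(\tilde{B},\tilde{\eta})$ with the data defining $\Delta_+$ in the form (\ref{inequality}), use the fiber diagram from the preceding lemma together with the observation $\iota_{\tilde{B},\tilde{\eta}}(\Delta_+) = \iota_{\tilde{B},\tilde{\eta}}(\frakr^*)\cap(\frakt^*_{\geq 0}\oplus\frakl^*_{\geq 0})$ to get $M_+ = (\mu_{\sfT},\mu)^{-1}(\iota_{\tilde{B},\tilde{\eta}}(\Delta_+)) = \calZ_{\Delta_+,\tilde{B},\tilde{\eta}}$, and then invoke Remark~\ref{cube}. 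The paper states this as immediate and gives no proof; your writeup simply supplies the omitted verification in full.
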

%%%%%%%%%%%%%%%%%%%%%%%%%%%%%%%%%%%%%%%%%%
\begin{cor}\label{-}
There is a canonical $\sfT\times \sfL$-equivariant homeomorphism $\Psi: M_- \cong \calZ_{K_-,\widetilde{[m]}}$.
\end{cor}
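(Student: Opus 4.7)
The approach is to realize $M_-$ as a standard moment angle manifold associated to $\Delta_-$ and then invoke Remark \ref{cube}, mirroring the one-line argument for Corollary \ref{+}. The obstacle is that $M_-$ was defined using $\overline{\CC}$ (with reversed symplectic structure), so the data $(\tilde{B},\tilde{\eta})$ that worked for $\calX_+$ cannot be used directly; we must flip the sign of its last column and last entry to compensate for the sign reversal in $\overline{\mu}$.

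First, rewrite the defining inequality $\langle \gamma, \vec{x}\rangle + \xi \leq 0$ for $\Delta_-$ inside $\Delta$ as $\langle -\gamma, \vec{x}\rangle + (-\xi) \geq 0$. This presents $\Delta_-$ in the form (\ref{inequality}) with enlarged integral data $\tilde{B}_- := [\sfb_1\beta_1, \ldots, \sfb_m\beta_m, -\gamma]$ and $\tilde{\eta}_- := (\eta_1, \ldots, \eta_m, -\xi)$. Applying Definition \ref{MAM} to these data produces a standard moment angle manifold $\calZ_{\Delta_-, \tilde{B}_-, \tilde{\eta}_-} \subset \CC^m \times \CC$, and the associated simplicial complex on $\widetilde{[m]} = [m]\cup\{o\}$ is precisely $K_-$: the vertex $o$ corresponds to the new facet $\Delta_- \cap \calH$ cut out by $\langle \gamma, \vec{x}\rangle + \xi = 0$.

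Next, verify the set-theoretic equality $M_- = \calZ_{\Delta_-, \tilde{B}_-, \tilde{\eta}_-}$ inside $\CC^{m+1}$, together with coincidence of the standard $\sfT\times\sfL$-action. By the preceding lemma, $(\vec{z}, w) \in M_-$ iff there exists $\vec{x} \in \Delta_-$ with $|z_i|^2 = \langle \sfb_i \beta_i, \vec{x}\rangle + \eta_i$ and $-|w|^2 = \langle \gamma, \vec{x}\rangle + \xi$; the latter equation is equivalent to $|w|^2 = \langle -\gamma, \vec{x}\rangle + (-\xi)$, which is exactly the moment angle condition imposed by $(\tilde{B}_-, \tilde{\eta}_-)$ together with the standard moment map $\mu: \CC \to \frakl^*$. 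Since both descriptions use the coordinate-wise $U(1)^{m+1}$-action on $\CC^{m+1}$, the identification is $\sfT\times\sfL$-equivariant.

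Finally, applying the homeomorphism (\ref{Theta}) to $(\Delta_-, \tilde{B}_-, \tilde{\eta}_-)$ yields a $\sfT\times\sfL$-equivariant homeomorphism $\Theta_{\Delta_-, \tilde{B}_-, \tilde{\eta}_-} : \calZ_{\Delta_-, \tilde{B}_-, \tilde{\eta}_-} \cong \calZ_{K_-, \widetilde{[m]}}$; defining $\Psi$ to be this map, composed with the identification $M_- = \calZ_{\Delta_-, \tilde{B}_-, \tilde{\eta}_-}$, completes the argument. The only delicate step is the sign bookkeeping in the second paragraph: once the substitution $\gamma \rightsquigarrow -\gamma$, $\xi \rightsquigarrow -\xi$ in the $(m+1)$-st slot has been made to absorb the sign in $\overline{\mu}$, the argument proceeds verbatim as in Corollary \ref{+}.
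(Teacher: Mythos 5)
Your proposal is correct and lands on exactly the same enlarged data $(\tilde{B}_- , \tilde{\eta}_-) = ([\sfb_1\beta_1,\ldots,\sfb_m\beta_m,-\gamma],\ (\eta_1,\ldots,\eta_m,-\xi))$ that the paper calls $(\tilde{B}',\tilde{\eta}')$, and both proofs finish by invoking the homeomorphism $\Theta_{\Delta_-,\tilde{B}',\tilde{\eta}'}$ of Remark \ref{cube}. The one genuine difference is how the $\overline{\CC}$-factor is handled. You identify $M_-$ with $\calZ_{\Delta_-,\tilde{B}',\tilde{\eta}'}$ \emph{directly}, as equal subsets of $\CC^{m+1}$ carrying the same coordinatewise $\sfT\times\sfL$-action; this is valid because the paper explicitly takes $\overline{\CC}$ to be $\CC$ with the \emph{standard} $\sfL$-action (only the symplectic form, and hence the moment map sign, is reversed). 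The paper instead routes through the conjugation $\sfJ:(\vec{z},w)\mapsto(\vec{z},\overline{w})$, observes that $\sfJ$ is equivariant only up to the involution $\sfj:(t,l)\mapsto(t,l^{-1})$, and then composes with a second copy of $\sfJ$ on $\calZ_{K_-,\widetilde{[m]}}$ so that the two $\sfj$-twists cancel. This extra step costs nothing and makes the argument robust to the alternative convention in which $\overline{\CC}$ carries the conjugate $U(1)$-action (there the sets $M_-$ and $\calZ_{\Delta_-,\tilde{B}',\tilde{\eta}'}$ would still coincide but the actions would not, and your direct equivariance claim would fail while the $\sfJ$-sandwich would still produce an honest equivariant map). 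Since the paper's stated convention is the standard action, your shorter argument is fine; it is worth noting, though, that the equivariance of the naive set-theoretic identification hinges on that choice, and that this is precisely the point the paper's $\sfJ$-bookkeeping is designed to make explicit.
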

\begin{proof}
The map $\sfJ:\CC^m\times \overline \CC \to \CC^m\times \CC\ \ (\vec{z},w) \mapsto (\vec{z}, \overline w)$ is a $\sfT\times \sfL$-equivariant homeomorphism with respect
to the involution $\sfj: \sfT\times\sfL \to \sfT\times \sfL, \ \ (t, l)\mapsto (t,l^{-1})$. The image $\sfJ(M_-)$ is naturally $\calZ_{\Delta_-,
  \tilde{B}',\tilde{\eta}'}$ where $\tilde{B}':=[\sfb_1\beta_1,\cdots,\sfb_m\beta_m, -\gamma]$ and $\tilde{\eta}':=(\eta_1,\cdots,\eta_m, -\xi)$. Since
$\sfJ$ also induces a $\sfT\times\sfL$-equivariant involution of $\calZ_{K_-,\widetilde{[m]}}$ with respect to $\sfj:\sfT\times\sfL \to \sfT\times\sfL$, we have an honest $\sfT\times\sfL$-equivariant homeomorphism:
\[
\Psi: M_- \stackrel{\sfJ}{\longrightarrow} \sfJ(M_-) = \calZ_{\Delta_-, \tilde{B}',\tilde{\eta}'}
\stackrel{\Theta_{\Delta_-,\tilde{B}',\tilde{\eta}'}}{\longrightarrow} \calZ_{K_-,\widetilde{[m]}} \stackrel{\sfJ}{\longrightarrow} \calZ_{K_-,\widetilde{[m]}}.
\]
\end{proof}
%%%%%%%%%%%%%%%%%%%%%%%%%%%%%%%%%%%%%%%%%%
\begin{cor}\label{214}
Topologically $\calX_+\cong[\calZ_{K_+,\widetilde{[m]}}/\tilde{\sfG}]$ and $\calX_-\cong[\calZ_{K_-,\widetilde{[m]}}/\tilde{\sfG}]$
\end{cor}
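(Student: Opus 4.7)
The plan is to deduce the statement directly from the preceding Lemma together with Corollaries \ref{+} and \ref{-}; all the substantive geometry is already contained in those results, and what remains is essentially a restriction-of-scalars for the group action.

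First I would recall that by construction $\calX_+ = [M_+/\tilde{\sfG}]$ and $\calX_- = [M_-/\tilde{\sfG}]$, where $\tilde{\sfG}$ acts through the inclusion $\tilde{A}:\tilde{\sfG}\hookrightarrow \sfT\times\sfL$ together with the standard $\sfT\times\sfL$-action on $\CC^m\times\CC$ (respectively on $\CC^m\times\overline{\CC}$). Consequently it suffices to produce $\tilde{\sfG}$-equivariant homeomorphisms $M_\pm \iso \calZ_{K_\pm,\widetilde{[m]}}$, since the topological quotient by $\tilde{\sfG}$ then transports the identification to the level of the quotient stacks regarded as topological spaces.

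For the plus side, Corollary \ref{+} already supplies the $\sfT\times\sfL$-equivariant homeomorphism $\Theta_{\Delta_+,\tilde{B},\tilde{\eta}}: M_+ \iso \calZ_{K_+,\widetilde{[m]}}$. Restricting the action along $\tilde{A}$ makes it $\tilde{\sfG}$-equivariant, and passing to quotients yields $\calX_+\cong [\calZ_{K_+,\widetilde{[m]}}/\tilde{\sfG}]$. The argument on the minus side is identical once one has the $\sfT\times\sfL$-equivariant homeomorphism $\Psi: M_- \iso \calZ_{K_-,\widetilde{[m]}}$ from Corollary \ref{-}.

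The only point that requires genuine checking, and where I expect the main subtlety to lie, is that $\Psi$ is honestly $\sfT\times\sfL$-equivariant rather than merely equivariant up to the involution $\sfj:(t,l)\mapsto (t,l^{-1})$. This however has already been arranged in the proof of Corollary \ref{-}: the two outer applications of $\sfJ$ sandwich the middle twisted-equivariant homeomorphism $\Theta_{\Delta_-,\tilde{B}',\tilde{\eta}'}$, so the involution on $\sfT\times\sfL$ is applied twice and cancels. With that verification in hand, the corollary follows purely by restricting the group action from $\sfT\times\sfL$ to $\tilde{\sfG}$ and forming the induced quotients.
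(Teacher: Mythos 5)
Your argument is correct and follows exactly the route the paper intends (it states the corollary without a separate proof, precisely because it is an immediate consequence of Corollaries \ref{+} and \ref{-} by restricting the $\sfT\times\sfL$-equivariant homeomorphisms to $\tilde{\sfG}$ and passing to quotients). Your observation that the two conjugations by $\sfJ$ in the construction of $\Psi$ cancel the involution $\sfj$ is exactly the point already made in the proof of Corollary \ref{-}, so nothing further is required.
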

%%%%%%%%%%%%%%%%%%%%%%%%%%%%%%%%%%%%%%%%%%
%%%%%%%%%%%%%%%%%%%%%%%%%%%%%%%%%%%%%%%%%%
\subsection{Gluing along the toric suborbifold}
%%%%%%%%%%%%%%%%%%%%%%%%%%%%%%%%%%%%%%%%%%
%%%%%%%%%%%%%%%%%%%%%%%%%%%%%%%%%%%%%%%%%%

Let $H_o=\Delta_+\cap\Delta_- \subset \frakr^*$.  Consider the obvious inclusions $\sfh_+: \CC^m\times \{0\} \to \CC^m\times \CC$ and $\sfh_-:\CC^m\times
\{0\} \to \CC^m\times \overline \CC$. Let $M_o^+:=(\mu_{\sfT},\mu)^{-1}( \iota_{\tilde{B},\tilde{\eta}}(H_o)) \subset \im\ \sfh_+$ and
$M_o^-:=(\mu_{\sfT},\overline\mu)^{-1}( \iota_{\tilde{B},\tilde{\eta}}(H_o)) \subset \im \ \sfh_-$. Define the suborbifold corresponding to $H_0$ in $\calX_+$ and
$\calX_-$ by
\[
\calX_o:= [M_o/\tilde{\sfG}] \ \ \ \mbox{ where }  M_o:=\sfh_+^{-1}(M_o^+) = \sfh_-^{-1}(M_o^-).
\]
together with the embedding $\sfh_+: M_o \inc M_+$ and $\sfh_-: M_o \inc M_-$. We obtained the space $M_+\cup_{M_o} M_-$ which is given by gluing $M_+$ and $M_-$
along $M_o$ with respect to $\sfh_+$ and $\sfh_-$.
%%%%%%%%%%%%%%%%%%%%%%%%%%%%%%%%%%%%%%%%%%

On the other hand, since $K_+$ and $K_-$ have the common vertex set $\widetilde{[m]}$, we can naturally glue them to obtain a simplicial complex $K_+ \cup K_-$ where $W:=K_+\cap K_- = \sstar_{K_+}(o) = \sstar_{K_-}(o)$ where $\sstar_{K_{\pm}}(o)$ is the smallest simplicial complex containing all faces in $K_{\pm}$ that contain $o$. It follows from Definition \ref{macDef} that $\calZ_{K_+\cup K_-} = \calZ_{K_+} \cup \calZ_{K_-}$ and $\calZ_{W}=\calZ_{K_+} \cap \calZ_{K_-}$ where we suppressed the vertex set $\widetilde{[m]}$. The image of $M_o$ under $\Theta_{\Delta_+,\tilde{B},\tilde{\eta}}$ and $\Psi$ coincide with
\[
\calZ_{W}^{\circ} := \bigcup_{o \in \sigma \in \calF(K_+)} \{0\}^{\{o\}} \times \sfD^{\sigma\backslash \{o\}} \times (\partial\sfD)^{\widetilde{[m]}\backslash
  \sigma} =\{0\}^{\{o\}} \times \left( \bigcup_{o \in \sigma \in \calF(K_+)} \sfD^{\sigma\backslash \{o\}} \times \sfD^{\widetilde{[m]}\backslash \sigma}
\right).
\]
It is a subspace of 
\[
\calZ_{W} = \bigcup_{\sigma \in \calF(W)} \sfD^{\sigma}\times (\partial \sfD)^{\widetilde{[m]} \backslash \sigma} = \sfD^{\{o\}} \times \left( \bigcup_{o \in
  \sigma \in \calF(W)} \sfD^{\sigma\backslash \{o\}} \times \sfD^{\widetilde{[m]}\backslash \sigma} \right).
\]
Therefore the $\tilde{\sfT}$-equivariant homeomorphism $\Theta_+:=\Theta_{\Delta_+,\tilde{B},\tilde{\eta}}$ and $\Psi$ induces a $\tilde{\sfT}$-equivariant map
\[
\Phi: M_+\cup_{M_o} M_- \to \calZ_{K_+ \cup K_-}.
\]
%%%%%%%%%%%%%%%%%%%%%%%%%%%%%%%%%%%%%%%%%%
\begin{lem}\label{lemUnion}
For any subgroup $\sfQ \subset \tilde{\sfT}$, the pullback $\Phi^*: H_{\sfQ}^*(\calZ_{K_+\cup K_-},\ZZ) \to H_{\sfQ}^*(M_+ \cup_{M_o} M_-,\ZZ)$
is an isomorphism.
\end{lem}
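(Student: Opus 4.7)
The plan is to compare $\sfQ$-equivariant Mayer--Vietoris sequences for the two pushout descriptions and conclude via the five lemma. On the source, $M_+\cup_{M_o} M_-$ is covered by the closed $\tilde{\sfT}$-invariant subsets $M_+$ and $M_-$ with common intersection $M_o$; on the target, $\calZ_{K_+\cup K_-} = \calZ_{K_+,\widetilde{[m]}} \cup \calZ_{K_-,\widetilde{[m]}}$ with common intersection $\calZ_W$. Since all inclusions involved are $\tilde{\sfT}$-equivariant closed cofibrations (each space is a finite $\tilde{\sfT}$-CW complex whose cells come from the facet decomposition), applying the Borel construction produces honest homotopy pushouts, and $\Phi$ induces a ladder of Mayer--Vietoris long exact sequences in $H_{\sfQ}^*(-;\ZZ)$.

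On the ``big'' pieces of the ladder, the induced maps are already isomorphisms: by Corollaries \ref{+} and \ref{-}, the restrictions $\Phi|_{M_+} = \Theta_{\Delta_+,\tilde B,\tilde\eta}$ and $\Phi|_{M_-} = \Psi$ are $\tilde{\sfT}$-equivariant homeomorphisms, and hence induce $\sfQ$-equivariant cohomology isomorphisms after restricting the action to any subgroup $\sfQ \subset \tilde{\sfT}$. It remains to analyze the restriction on the intersection. By the computation preceding the lemma, $\Phi$ carries $M_o$ homeomorphically onto $\calZ_W^{\circ}\subset\calZ_W$, so it is enough to show that the inclusion $\iota: \calZ_W^{\circ} \inc \calZ_W$ is a $\tilde{\sfT}$-equivariant homotopy equivalence. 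Writing
\[
\calZ_W = \sfD^{\{o\}} \times A \quad \mbox{and} \quad \calZ_W^{\circ} = \{0\}^{\{o\}} \times A,
\]
where $A := \bigcup_{o \in \sigma \in \calF(W)} \sfD^{\sigma\backslash\{o\}} \times (\partial\sfD)^{\widetilde{[m]}\backslash\sigma}$, the scaling homotopy $r_t(z_o,\vec z) := ((1-t)z_o, \vec z)$ supplies such a deformation retraction: it commutes with multiplication by $\sfL$ on the $o$-th coordinate and with the factorwise $\sfT$-action on $A$, hence is $\tilde{\sfT}$-equivariant. Consequently $\iota^*$, and therefore $\Phi^*|_{M_o}$, is a $\sfQ$-equivariant cohomology isomorphism, and the five lemma concludes the argument.

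The main thing to be careful about is the Mayer--Vietoris setup: one needs the Borel construction to commute (up to homotopy) with the two pushout diagrams, which follows once $M_o \inc M_{\pm}$ and $\calZ_W \inc \calZ_{K_{\pm},\widetilde{[m]}}$ are identified as $\tilde{\sfT}$-equivariant cofibrations. Apart from this bookkeeping, the only geometric input is the equivariant retraction of the $o$-th disc factor onto its center, forced by the fact that $\Phi$ identifies $M_o$ with the ``core'' $\calZ_W^{\circ}$ rather than with all of $\calZ_W$.
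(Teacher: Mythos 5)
Your proof is correct and takes essentially the same route as the paper: a ladder of $\sfQ$-equivariant Mayer--Vietoris sequences for the two coverings, with the key input being that $\Phi|_{M_\pm}$ are equivariant homeomorphisms and that $\calZ_W^{\circ}\hookrightarrow\calZ_W$ is a $\tilde{\sfT}$-equivariant deformation retract, followed by the Five Lemma. The paper states the retraction without spelling out the scaling homotopy or the cofibration hypotheses; your version merely fills in that bookkeeping.
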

%%%%%%%%%%%%%%%%%%%%%%%%%%%%%%%%%%%%%%%%%%
\begin{proof}
We observe that there is a $\tilde{\sfT}$-equivariant deformation retract from $\calZ_W$ to $\calZ_W^{\circ}$, therefore $\Phi|_{M_o}^*: H_{\sfQ}^*(\calZ_{W}) \cong H_{\sfQ}^*(M_o)
$. The claim follows from the diagram of the Mayer-Vietoris sequences and the Five Lemma:
\[
\xymatrix{ 
H_{\sfQ}^{*-1}(M_+) \!\oplus\! H_{\sfQ}^{*-1}(M_-) \ar[r]^{\ \ \ \ \ \ \  \sfh_{+}^*-\sfh_{-}^*}  &H_{\sfQ}^{*-1}(M_o) \ar[r]& H_{\sfQ}^*(M_+ \cup_{M_o} M_-) \ar[r] & H_{\sfQ}^*(M_+)\! \oplus\! H_{\sfQ}^*(M_-) \ar[r]^{\ \ \ \ \ \ \  \sfh_{+}^*-\sfh_{-}^*} & H_{\sfQ}^*(M_o)\\ 
H_{\sfQ}^{*-1}(\calZ_{K_+}) \!\oplus\! H_{\sfQ}^{*-1}(\calZ_{K_-}) \ar[u]_{\cong}^{(\Theta_+^*,\Psi^*)}\ar[r]&H_{\sfQ}^{*-1}(\calZ_{W}) \ar[r] \ar[u]_{\cong}& H_{\sfQ}^*(\calZ_{K_+ \cup K_-}) \ar[u]_{\Phi^*} \ar[r] & H_{\sfQ}^*(\calZ_{K_+})\! \oplus\! H_{\sfQ}^*(\calZ_{K_-}) \ar[u]_{\cong}^{(\Theta_+^*,\Psi^*)}\ar[r] & H_{\sfQ}^*(\calZ_{W}) \ar[u]_{\cong} }
\]
\end{proof}
%%%%%%%%%%%%%%%%%%%%%%%%%%%%%%%%%%%%%%%%%%
\begin{lem}
\end{lem}
%%%%%%%%%%%%%%%%%%%%%%%%%%%%%%%%%%%%%%%%%%
\subsection{Computing the Cohomology of $\calX$} \label{M}
The original toric orbifold $\calX$ can also be defined by adding one more trivial inequality for $\Delta$:
\[
\lan \vec{x},\gamma\ran  + \xi' \geq 0, \ \ \ \xi' \gg 0. 
\]
Let $\tilde{\eta}':=(\eta_1,\cdots, \eta_m, \xi')$ and reduce $\CC^m\times \CC$ by the action of $\tilde{\sfG}$ at the regular value $\tilde{A}^*(\tilde{\eta}')$. We have
\[
\calX = [M'/\tilde{\sfG}] \ \ \mbox{ where } \ \ M':=\mu_{\tilde{\sfG}}^{-1}(\tilde{A}^*(\tilde{\eta}')).
\]
Then $M'=\calZ_{\Delta, \tilde{B},\tilde{\eta}'}$ and so we have the $\tilde{\sfT}$-equivariant homeomorphism $\Theta:=\Theta_{\Delta, \tilde{B},\tilde{\eta}'}: M' \to
\calZ_{K_{\Delta},\widetilde{[m]}}$. Thus we can identify $\calX \cong [\calZ_{K_{\Delta},\widetilde{[m]}}/\tilde{\sfG}]$.

Now for any subgroup $\sfQ \subset \tilde{\sfT}$, there are two long exact sequences to compute the (equivariant) cohomology of $M'\cong \calZ_{K_{\Delta},\widetilde{[m]}}$. One is the Mayer-Vietoris Sequence as in the proof of Lemma \ref{lemUnion} and the other is the relative cohomology sequence
\begin{equation}\label{relative}
\xymatrix{
\cdots \ar[r] &H^*_{\sfQ}(\calZ_{K_+\cup K_-}, \calZ_{K_{\Delta}}) \ar[r]_{\sfr_1^*}& H^*_{\sfQ}(\calZ_{K_+\cup K_-}) \ar[r]_{\sfr_2^*}& H^*_{\sfQ}(\calZ_{K_{\Delta}})\ar[r]& \cdots
}
\end{equation}
%%%%%%%%%%%%%%%%%%%%%%%%%%%%%%%%%%%%%%%%%%
Note that there is an isomorphism $\scT: H^{*-2}_{\sfQ}(\calZ_W) \to H^*_{\sfQ}(\calZ_{K_+\cup K_-}, \calZ_K)$ defined through the Thom isomorphism for $\calZ_W^{\circ}\subset \calZ_W$ and obvious pullback maps:
\[
\xymatrix{ 
H^{*-2}_{\sfQ}(\calZ_W)\ar[r]_{\cong} & H^{*-2}_{\sfQ}(\calZ_W^{\circ})\ar[r]_{\cong\ \ \ \ \ \ \ }^{T\!hom\ \ \ \ \ \ \ } &H^*_{\sfQ}(\calZ_W, \calZ_W\backslash \calZ_W^{\circ})\ar[r]_{\cong} & H^*_{\sfQ}(\calZ_W, \calZ_{\Delt_oW})& H^*_{\sfQ}(\calZ_{K_+\cup K_-},\calZ_{K_{\Delta}})
  \ar[l]^{\ \ \ \ \cong}.\\
  }
\]
Furthermore, we also have the natural maps $\scT_{\pm}: H^{*-2}_{\sfQ}(\calZ_W) \to H^*_{\sfQ}(\calZ_{K_{\pm}})$ given as a composition of $\calT$ and obvious pullback maps:
\[
\xymatrix{ 
\scT_{\pm}:H^{*-2}_{\sfQ}(\calZ_W)\ar[r]^{\scT\ \ \ \ \ }_{\cong\ \ \ \ }& H^*_{\sfQ}(\calZ_{K_+\cup K_-}, \calZ_{K_{\Delta}}) \ar[r]_{\sfr_1^*}&H_{\sfQ}^*(\calZ_{K_+\cup K_-}) \ar[r] & H_{\sfQ}^*(\calZ_{K_{\pm}}).\\
  }
\]
If the Mayer-Vietoris sequence and the relative cohomology sequence split into short exact sequences, more precisely, if the odd degrees of the cohomology of $\calZ_W, \calZ_{K_{\pm}}$ and $\calZ_{K_{\Delta}}$  vanish, then $H_{\sfQ}^*(\calZ_{K_{\Delta}})$ is isomorphic to the quotient of the kernel of $H_{\sfQ}^*(\calZ_{K_+ \cup K_-}) \to H_{\sfQ}^*(\calZ_{K_+})\! \oplus\! H_{\sfQ}^*(\calZ_{K_-})$ by the image of $(\scT_+,\scT_-)$. Since $\scT_{\pm}$ can be identified with the pushforward maps $\sfh_{\pm+}$ respectively, we also have that $H_{\sfQ}^*(M')$ is isomorphic to the quotient of the kernel of $\sfh_+^*-\sfh_-^*$ by the image of $(\sfh_{+*},\sfh_{-*})$. We state this result for the case that we are interested in:
%%%%%%%%%%%%%%%%%%%%%%%%%%%%%%%%%%%%%%%%%%
\begin{thm}\label{main}
Recall the embedding $\sfh_{\pm}:M_o \to M_{\pm}$. We have
\[
H_{\tilde{\sfT}}^*(M';\ZZ) \cong \frac{\ker \left( \sfh_+^*-\sfh_-^*: H_{\tilde{\sfT}}^*(M_+;\ZZ) \oplus H_{\tilde{\sfT}}^*(M_-;\ZZ) \to
  H_{\tilde{\sfT}}^*(M_o;\ZZ) \right)}{ \im \left((\sfh_{+*}, \sfh_{-*}): H_{\tilde{\sfT}}^*(M_o;\ZZ) \to H_{\tilde{\sfT}}^*(M_+;\ZZ) \oplus
  H_{\tilde{\sfT}}^*(M_-;\ZZ) \right) }
\]
Furthermore
\[
H_{\tilde{\sfG}}^*(M';\QQ) \cong \frac{\ker \left( \sfh_+^*-\sfh_-^*: H_{\tilde{\sfG}}^*(M_+;\QQ) \oplus H_{\tilde{\sfG}}^*(M_-;\QQ) \to
  H_{\tilde{\sfG}}^*(M_o;\QQ) \right)}{ \im \left((\sfh_{+*}, \sfh_{-*}): H_{\tilde{\sfG}}^*(M_o;\QQ) \to H_{\tilde{\sfG}}^*(M_+;\QQ) \oplus
  H_{\tilde{\sfG}}^*(M_-;\QQ) \right) },
\]
which is also true over $\ZZ$-coefficients if the cohomology rings of $M_o,M_{\pm}, M'$ are concentrated in even degrees.
\end{thm}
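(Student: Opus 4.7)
The plan follows the outline in Section \ref{M}: use the homeomorphisms from Corollaries \ref{+} and \ref{-} to identify $M_\pm$ with $\calZ_{K_\pm,\widetilde{[m]}}$ and $M'$ with $\calZ_{K_\Delta,\widetilde{[m]}}$, and then combine two exact sequences on these moment angle complexes. The first is the $\sfQ$-equivariant Mayer--Vietoris sequence from Lemma \ref{lemUnion} for $\calZ_{K_+\cup K_-} = \calZ_{K_+}\cup\calZ_{K_-}$ with intersection $\calZ_W$. The second is the relative long exact sequence (\ref{relative}) for the inclusion $\calZ_{K_\Delta} \hookrightarrow \calZ_{K_+\cup K_-}$, in which the relative term is rewritten via the Thom isomorphism $\scT: H^{*-2}_{\sfQ}(\calZ_W) \iso H^*_{\sfQ}(\calZ_{K_+\cup K_-},\calZ_{K_\Delta})$ for the $\CC$-bundle structure on $\calZ_W \supset \calZ_W^\circ$ in the $o$-th coordinate.

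When the relevant cohomology rings are concentrated in even degrees, both long exact sequences split into short exact sequences. For $\sfQ = \tilde{\sfT}$ this is automatic: Theorem \ref{DJ} identifies each $H^*_{\tilde{\sfT}}(\calZ_L)$ with the Stanley--Reisner ring $\ZZ[L]$, which lives only in even degrees. A diagram chase through the two splittings then presents $H^*_{\sfQ}(M')$ as the quotient of the Mayer--Vietoris kernel of $\sfh_+^* - \sfh_-^*$ acting on $H^*_{\sfQ}(M_+) \oplus H^*_{\sfQ}(M_-)$ by the image of the composite $(\scT_+, \scT_-)$. For the $\tilde{\sfG}$-statement, the same argument runs with $\sfQ = \tilde{\sfG}$: over $\QQ$ the odd-degree vanishing is automatic because $\tilde{\sfG}$ acts locally freely on $M_o, M_\pm, M'$ and the rational cohomology of the quotient toric orbifolds vanishes in odd degrees; over $\ZZ$ it is precisely the hypothesis in the final clause.

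The key geometric input, which I expect to be the main obstacle, is the identification $\scT_\pm = \sfh_{\pm*}$. Under the homeomorphisms of Corollaries \ref{+} and \ref{-}, the image $\sfh_\pm(M_o) \subset M_\pm$ corresponds to $\calZ_W^\circ = \{0\}^{\{o\}} \times \bigl(\bigcup_{o\in\sigma\in\calF(K_+)} \sfD^{\sigma\backslash\{o\}} \times (\partial\sfD)^{\widetilde{[m]}\backslash\sigma}\bigr) \subset \calZ_{K_\pm}$, the zero section of the tautological $\tilde{\sfT}$-equivariant $\CC$-line bundle in the $o$-th coordinate whose total space is the tubular neighborhood $\calZ_W$. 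The normal bundle of $\sfh_\pm$ is therefore this $\CC$-bundle (up to the orientation reversal on the $-$-side coming from the use of $\overline{\CC}$, which must be tracked carefully), so the equivariant Gysin pushforward $\sfh_{\pm*}$ is cup product with the equivariant Thom class of this normal bundle. Since $\scT_\pm$ is built as the Thom isomorphism for this same $\CC$-bundle followed by restriction to $\calZ_{K_\pm}$, the two maps coincide, and substituting $\sfh_{\pm*}$ for $\scT_\pm$ in the quotient presentation yields the theorem.
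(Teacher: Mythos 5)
Your proposal follows essentially the same route as the paper: identify $M_\pm$ and $M'$ with the moment angle complexes via Corollaries \ref{+}, \ref{-} and the map $\Theta$, combine the $\sfQ$-equivariant Mayer--Vietoris sequence with the relative sequence \eqref{relative} via the Thom isomorphism $\scT$, observe that everything splits once odd-degree cohomology vanishes (automatic for $\sfQ = \tilde{\sfT}$ by Theorem \ref{DJ}, for $\sfQ = \tilde{\sfG}$ over $\QQ$ by \cite{Danilov78,Jurkiewicz85}, and by hypothesis for the final clause), and identify $\scT_\pm$ with the Gysin pushforwards $\sfh_{\pm*}$ along the codimension-two embeddings. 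Your extra care in flagging the normal-bundle interpretation of $\scT_\pm = \sfh_{\pm*}$ and the orientation bookkeeping through the $\overline{\CC}$-conjugation in Corollary \ref{-} is a reasonable elaboration of a step the paper asserts tersely, but it is not a different method.
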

\begin{proof}
The first claim follows, since the odd degree of $\tilde{\sfT}$-equivariant cohomology vanishes. The second claim follows from the fact that the odd degree of rational ordinary cohomology of toric orbifolds vanishes \cite{Danilov78,Jurkiewicz85}.
\end{proof}
%%%%%%%%%%%%%%%%%%%%%%%%%%%%%%%%%%%%%%%%%%
\begin{rem}
Let $\sfT$ act on $M$ and suppose the action of $\sfG \subset \sfT$ is locally free. This defines an $\sfR:=\sfT/\sfG$-action on an orbifold $[M/\sfG]$. The cohomology
$H^*([M/\sfG];\ZZ)$ is defined to be $H^*_{\sfG}(M;\ZZ)$ and the equivariant cohomology $H_{\sfR}^*([M/\sfG];\ZZ)$ is defined to be $H_{\sfT}^*(M;\ZZ)$. We
refer to Edidin \cite{Edidin10} for the details. With the notation of the connected sum of rings which is explained in Definition \ref{defconn}, Theorem
\ref{main} is exactly our main theorem described in the introduction.
\end{rem}
%%%%%%%%%%%%%%%%%%%%%%%%%%%%%%%%%%%%%%%%%%
%%%%%%%%%%%%%%%%%%%%%%%%%%%%%%%%%%%%%%%%%%
\subsection{Computing Cohomology of $\calX_-$}
%%%%%%%%%%%%%%%%%%%%%%%%%%%%%%%%%%%%%%%%%%
%%%%%%%%%%%%%%%%%%%%%%%%%%%%%%%%%%%%%%%%%%
Similarly we can consider the following two long exact sequences in terms of moment angle complexes and interpret them in terms of level sets of moment maps. Again we suppress the vertex set $\widetilde{[m]}$ from the notation of moment angle complexes. Let $\tilde{K}:=K_+\cup K_- = K \cup K_+$.  We have the Mayer-Vietoris Sequence
\begin{equation}\label{MV2}
\cdots \to H_{\sfQ}^{*-1}(\calZ_{K_+\cap K}) \to H_{\sfQ}^*(\calZ_{\tilde{K}}) \to H_{\sfQ}^*(\calZ_{K_+}) \oplus H_{\sfQ}^*(\calZ_{K}) \to
H_{\sfQ}^*(\calZ_{K_+\cap K}) \to H_{\sfQ}^{*+1}(\calZ_{\tilde{K}}) \to \cdots;
\end{equation}
and the relative cohomology sequence
\begin{equation}\label{relative2}
\cdots \to H^{*-1}_{\sfQ}(\calZ_{K_-})\to H^*_{\sfQ}(\calZ_{\tilde{K}}, \calZ_{K_-}) \to H^*_{\sfQ}(\calZ_{\tilde{K}}) \to H^*_{\sfQ}(\calZ_{K_-}) \to
H^{*+1}_{\sfQ}(\calZ_{\tilde{K}}, \calZ_K) \to \cdots.
\end{equation}
Let $\tilde{B}$, $\tilde{\eta}'$ and $M'$ be the ones defined in Section \ref{M}. Let $N_+:=(\mu_{\sfT},\mu)^{-1}(\iota_{\tilde{B},\tilde{\eta}'}(\Delta_+))$. Since $\Delta_+ \subset \Delta$, we have the obvious inclusion $\sff: N_+ \subset M'$. We can choose a cubic subdivision of $\Delta$ in such a way that $\Theta_{\Delta, \tilde{B},\tilde{\eta}'}(N_+) = \calZ_{K\cap K_+}$. Let $\sfg_+: N_+ \to M_+$ be the natural inclusion defined by $N_+ \cong \calZ_{K\cap K_+} \inc \calZ_{K_+} \cong M_+$. Thus the map $H_{\sfQ}^*(\calZ_{K_+}) \oplus H_{\sfQ}^*(\calZ_{K}) \to
H_{\sfQ}^*(\calZ_{K_+\cap K})$ in (\ref{MV2}) can be replaced by 
\[
H_{\sfQ}^*(M_+) \oplus H_{\sfQ}^*(M') \stackrel{\sfg_+^*-\sff^*}{\longrightarrow}
H_{\sfQ}^*(N_+);
\]

On the other hand, observe that the inclusions of pairs $(K,K\cap K_-) \subset (\tilde{K},K_-) \supset (K_+, W) \supset (K_+\cap K, W\cap K)$ induces isomorphism by pullback on relative cohomology:
\[
H^*_{\sfQ}(\calZ_{\tilde{K}}, \calZ_{K_-}) \cong H^*_{\sfQ}(\calZ_K, \calZ_{K\cap K_-}) \cong H^*_{\sfQ}(\calZ_{K_+}, \calZ_W) \cong H^*_{\sfQ}(\calZ_{K_+\cap K}, \calZ_{W\cap K}).
\]
Let $N_-:=(\mu_{\sfT},\mu)^{-1}(\iota_{\tilde{B},\tilde{\eta}'}(\Delta_-))$ and $N_o:=(\mu_{\sfT},\mu)^{-1}(\iota_{\tilde{B},\tilde{\eta}'}(H_0))$ so that, with the same cubic subdivision of $\Delta$ used above, we have $\Theta_{\Delta, \tilde{B},\tilde{\eta}'}(N_-) = \calZ_{K\cap K_-}$ and $\Theta_{\Delta, \tilde{B},\tilde{\eta}'}(N_o) = \calZ_{K\cap W}$. Then by the functoriality, the map $H^*_{\sfQ}(\calZ_{\tilde{K}}, \calZ_{K_-}) \to H^*_{\sfQ}(\calZ_{\tilde{K}})  \to H_{\sfQ}^*(\calZ_{K_+}) \oplus
H_{\sfQ}^*(\calZ_{K})$ can be replaced by the following map:
\[
\delta: H^*_{\sfQ}(N_+, N_o) \stackrel{diag}{\longrightarrow} H^*_{\sfQ}(N_+, N_o) \oplus H^*_{\sfQ}(N_+, N_o) \cong H^*_{\sfQ}(M_+,
\Theta_{\Delta_+,\tilde{B},\tilde{\eta}}^{-1}(\calZ_W)) \oplus H^*_{\sfQ}(M, N_-) \to H_{\sfQ}^*(M_+) \oplus H_{\sfQ}^*(M).
\]
Thus similarly to Theorem \ref{main}, we obtain the following theorem:
\begin{prop}[c.f. \cite{HK}]\label{main2} We have
\[
H_{\tilde{\sfT}}^*(M_-;\ZZ) \cong \frac{\ker \left( (\sfg_+^*,-\sff^*): H_{\tilde{\sfT}}^*(M_+;\ZZ) \oplus H_{\tilde{\sfT}}^*(M;\ZZ) \to
  H_{\tilde{\sfT}}^*(N_+;\ZZ) \right)}{ \im \left(\delta: H_{\tilde{\sfT}}^*(N_+,N_o;\ZZ) \to H_{\tilde{\sfT}}^*(M_+;\ZZ) \oplus H_{\tilde{\sfT}}^*(M;\ZZ)
  \right) }.
\]
Furthermore, if $H_{\tilde{\sfG}}^*(M;\ZZ) \to H_{\tilde{\sfG}}^*(N_+;\ZZ)$ or $H_{\tilde{\sfG}}^*(M_+;\ZZ) \to H_{\tilde{\sfG}}^*(N_+;\ZZ)$ is surjective, then 
\[
H_{\tilde{\sfG}}^*(M_-;\ZZ) \cong \frac{\ker \left( (\sfg_+^*,-\sff^*): H_{\tilde{\sfG}}^*(M_+;\ZZ) \oplus H_{\tilde{\sfG}}^*(M;\ZZ) \to
  H_{\tilde{\sfG}}^*(N_+;\ZZ) \right)}{ \im \left(\delta: H_{\tilde{\sfG}}^*(N_+,N_o;\ZZ) \to H_{\tilde{\sfG}}^*(M_+;\ZZ) \oplus H_{\tilde{\sfG}}^*(M;\ZZ)
  \right) }
\]
\end{prop}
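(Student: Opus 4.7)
The plan is to mirror the proof of Theorem \ref{main}, using the Mayer-Vietoris sequence (\ref{MV2}) for $\tilde K = K_+ \cup K$ with intersection $K_+ \cap K$, and the relative cohomology sequence (\ref{relative2}) for the pair $(\calZ_{\tilde K}, \calZ_{K_-})$. The key step is to split both long exact sequences into short ones and then splice them at $H^*_\sfQ(\calZ_{\tilde K})$.

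For the $\tilde\sfT$-equivariant claim I would set $\sfQ = \tilde\sfT$. By Theorem \ref{DJ}, $H^*_{\tilde\sfT}(\calZ_L;\ZZ) \cong \ZZ[L]$ is concentrated in even degrees for every $L \in \{\tilde K, K_+, K, K_-, K_+\cap K\}$; the relative group $H^*_{\tilde\sfT}(N_+, N_o)$ is then also even-degree via its own LES between two Stanley-Reisner rings. A parity argument forces every connecting homomorphism in (\ref{MV2}) and (\ref{relative2}) to vanish, so both sequences break into short exact pieces. The splitting of (\ref{MV2}) identifies $H^*_{\tilde\sfT}(\calZ_{\tilde K})$ with $\ker(\sfg_+^* - \sff^*)$, while the splitting of (\ref{relative2}) combined with the excision chain $H^*_{\tilde\sfT}(\calZ_{\tilde K}, \calZ_{K_-}) \cong H^*_{\tilde\sfT}(N_+, N_o)$ and the explicit factorization of the relative-to-absolute composition as $\delta$ recalled before the statement identify $H^*_{\tilde\sfT}(\calZ_{K_-})$ with $\ker(\sfg_+^* - \sff^*)/\im(\delta)$. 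Transporting through the equivariant homeomorphisms of Corollaries \ref{+} and \ref{-} and Section \ref{M} gives the first isomorphism.

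For the $\tilde\sfG$-equivariant statement the same diagram chase applies once both sequences split. The odd parts of $H^*_{\tilde\sfG}(M)$ and $H^*_{\tilde\sfG}(M_\pm)$ vanish by the Danilov--Jurkiewicz theorem applied to the toric orbifolds $\calX$ and $\calX_\pm$. Under either surjectivity hypothesis $(\sfg_+^*, -\sff^*)$ is surjective, so (\ref{MV2}) splits and pins down $H^*_{\tilde\sfG}(\calZ_{\tilde K})$ as $\ker(\sfg_+^*-\sff^*)$, which then sits in even degrees. Combining this with the even-degree vanishing of $H^*_{\tilde\sfG}(\calZ_{K_-})$ and with the identification of $H^*_{\tilde\sfG}(\calZ_{\tilde K}, \calZ_{K_-})$, the relative sequence (\ref{relative2}) splits too, yielding the second isomorphism by the same identifications as in the $\tilde\sfT$ case.

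The main technical obstacle I anticipate is checking that $H^*_{\tilde\sfG}(N_+, N_o)$ is concentrated in even degrees under the surjectivity hypothesis, so that (\ref{relative2}) really splits in the $\tilde\sfG$-equivariant case; this I would deduce from the LES of the pair $(N_+, N_o) \simeq (\calZ_{K_+}, \calZ_W)$ together with even-degree vanishing for $\calZ_{K_+}$ (as a toric orbifold quotient) and for $\calZ_W$ (inherited from the MV splitting, since $\calZ_W = \calZ_{K_+} \cap \calZ_{K_-}$ with both $\calZ_{K_\pm}$ toric after quotient). The remaining identification of the relative-to-absolute composition with the explicit $\delta$ is a naturality chase along the chain of pair-inclusions $(K, K\cap K_-) \subset (\tilde K, K_-) \supset (K_+, W) \supset (K_+\cap K, W\cap K)$ indicated in the paragraph preceding the proposition.
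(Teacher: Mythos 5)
Your treatment of the first ($\tilde{\sfT}$-equivariant) isomorphism is correct and matches the paper: by Theorem \ref{DJ} everything in sight is a Stanley--Reisner ring or an ideal in one, hence concentrated in even degrees, so both (\ref{MV2}) and (\ref{relative2}) split and the two short exact sequences splice at $H^*_{\tilde{\sfT}}(\calZ_{\tilde K})$, with the relative term and the map to $H^*(M_+)\oplus H^*(M)$ identified with $H^*_{\tilde{\sfT}}(N_+,N_o)$ and $\delta$ via the chain of excision isomorphisms.

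The $\tilde{\sfG}$-equivariant case over $\ZZ$ has a genuine gap. You invoke the Danilov--Jurkiewicz theorem to conclude that $H^*_{\tilde{\sfG}}(M;\ZZ)$ and $H^*_{\tilde{\sfG}}(M_{\pm};\ZZ)$ vanish in odd degrees, but Danilov--Jurkiewicz only gives this over $\QQ$; over $\ZZ$ the ordinary cohomology of a toric orbifold can have odd-degree (and torsion) classes. The paper itself is careful about this: the proof of Theorem \ref{main} cites Danilov--Jurkiewicz only for the $\QQ$-statement, and the $\ZZ$-statement of Theorem \ref{main} is conditioned on an explicit even-degree hypothesis. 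Proposition \ref{main2}'s $\ZZ$-statement is instead formulated with a surjectivity hypothesis precisely because even-degree vanishing is not available; that hypothesis has to do actual work, not serve as a shortcut to evenness. If the cohomology groups were already even-degree the surjectivity would be automatic from (\ref{MV2}), so your argument renders the stated hypothesis vacuous, which is a sign the logic is inverted. The subsequent deductions --- that $\ker(\sfg_+^*-\sff^*)$ ``sits in even degrees,'' that $H^*_{\tilde{\sfG}}(N_+,N_o)$ is even by comparing with $\calZ_{K_+}$ and $\calZ_W$ --- all inherit this error, since they rest on the unjustified evenness of the toric-orbifold groups over $\ZZ$.

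What is actually needed in the $\tilde{\sfG}$ case is this: the surjectivity hypothesis gives that $(\sfg_+^*,-\sff^*)$ is onto, so (\ref{MV2}) splits directly and $H^*_{\tilde{\sfG}}(\calZ_{\tilde K})\cong\ker(\sfg_+^*-\sff^*)$ without any parity input. The splitting of (\ref{relative2}) then has to be extracted by naturality, not parity: compare the long exact sequence of the pair $(\calZ_{\tilde K},\calZ_{K_-})$ with that of a smaller pair along the chain $(K,K\cap K_-)\subset(\tilde K,K_-)\supset(K_+,W)\supset(K_+\cap K,W\cap K)$, using the excision isomorphism of the relative terms, and show that the relevant connecting map factors through a map that is already known to be zero under the surjectivity hypothesis. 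Your write-up does not carry out or even flag this naturality chase for the connecting map; replacing it by an appeal to evenness over $\ZZ$ does not close the gap.
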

%%%%%%%%%%%%%%%%%%%%%%%%%%%%%%%%%%%%%%%%%%
\begin{rem}
The above proposition is a special case of what is proved by Hausmann-Knutson \cite{HK} for more general symplectic cuts. They used the projection $\sfp:N_+ \to M_+$ by quotienting the boundary of
$N_+$ by a circle action, instead of the inclusion $\sfg_+: N_+ \inc M_+$ in our case. It is actually easy to see that $\sfp$ and $\sfg_+$ are homotopy equivalent. Namely,
\[
N_+\cong \calZ_{K\cap K_+,\widetilde{[m]}} = (\partial \sfD)^{\{o\}} \times \calZ_{K\cap K_+, [m]}. 
\]
is a deformation retract of
\[
N_+^{\bullet}\cong \calZ_{K\cap K_+,\widetilde{[m]}}^{\bullet} = (\sfD\backslash\frac{1}{2}\sfD)^{\{o\}} \times \calZ_{K\cap K_+, [m]}
\]
where $\sfD\backslash\frac{1}{2}\sfD = \{ z \in \CC \ |\ \frac{1}{2} \leq |z|\leq 1\}$. Define $h_t: N_+^{\bullet} \to \calZ_{K_+,\widetilde{[m]}}, 0\leq t \leq
1$ by sending $\sfD\backslash\frac{1}{2}\sfD \to \sfD$ via
\[
re^{2\pi i\theta} \mapsto \left(\frac{1}{1+t}\right)\left(r- \frac{1}{2}\right)e^{2\pi i\theta}.
\]

\end{rem}
%%%%%%%%%%%%%%%%%%%%%%%%%%%%%%%%%%%%%%%%%%
%%%%%%%%%%%%%%%%%%%%%%%%%%%%%%%%%%%%%%%%%%
\section{{\bf Connected sum of simplicial complexes}}\label{secondpart}
In this section, we define the (strong) connected sum $K_1~\#^Z K_2$ of simplicial complexes $K_1$ and $K_2$ on a vertex set $[m]$. It is motivated
by the simplicial complexes of the polytopes obtained by the symplectic cut of a toric orbifold. We show that the case of the cutting polytope defines a strong
connected sum of simplicial complexes.
%%%%%%%%%%%%%%%%%%%%%%%%%%%%%%%%%%%%%%%%%%
%%%%%%%%%%%%%%%%%%%%%%%%%%%%%%%%%%%%%%%%%%
%%%%%%%%%%%%%%%%%%%%%%%%%%%%%%%%%%%%%%%%%%

%%%%%%%%%%%%%%%%%%%%%%%%%%%%%%%%%%%%%%%%%%
%%%%%%%%%%%%%%%%%%%%%%%%%%%%%%%%%%%%%%%%%%
\subsection{(Strong) Connected Sums} 
%%%%%%%%%%%%%%%%%%%%%%%%%%%%%%%%%%%%%%%%%%
%%%%%%%%%%%%%%%%%%%%%%%%%%%%%%%%%%%%%%%%%%
\begin{defn}[Connected Sum]\label{defn:conn sum}
Recall our notation from Definition \ref{defnsimp}. Let $K_1$ and $K_2$ be simplicial complexes on $[m]$. Let $Z \subset K_1\cap K_2$ be a subset not
containing the empty set and suppose that $O_{K_1\cup K_2}(Z) \subset K_1\cap K_2$. The \emph{connected sum} $K_1 ~\#^Z K_2$ of $K_1$ and
$K_2$ along $Z$ is defined by
\[
K_1 ~\#^Z K_2 := \Delt_Z(K_1\cup K_2).
\]
Note that since $O_K(Z) \subset K_1\cap K_2$ and $K_1\cap K_2$ is a subcomplex, $\sstar_K(Z)=\overline{O_K(Z)} \subset K_1\cap K_2$.
\end{defn}
%%%%%%%%%%%%%%%%%%%%%%%%%%%%%%%%%%%%%%%%%%
\begin{exm}[Connected sum along a facet p.24 \cite{BP}]
Let $K_1$ and $K_2$ be two pure simplicial complexes. Let $\sigma_i \in \calF(K_i)$. If we identify the vertex sets of $\sigma_1$ and $\sigma_2$, we have $K_1
\cap K_2 = \overline{\sigma}$ where we denote $\sigma = \sigma_1 = \sigma_2$. Let $Z:=\{\sigma\}$ and then $O_{K_1\cup K_2}(Z)=\{\sigma\} \subset K_1\cap
K_2$. The connected sum $K_1 ~\#^{\sigma} K_2 := K_1 \cup K_2 \backslash \{\sigma\}$ is exactly the ``connected sum" defined in \cite{BP}.
\end{exm}
%%%%%%%%%%%%%%%%%%%%%%%%%%%%%%%%%%%%%%%%%%
\begin{exm}
Let $v(K_1)=\{a,b,c,d\}$ and $v(K_2)=\{a,b,c,e\}$. Let $\calF(K_1)=\{abc,bcd\}$ and $\calF(K_2)=\{abc, ace\}$. Then $\calF(W)=\{abc\}$ and let
$Z=\{abc\}=O_K(Z)$. This is a connected sum which is a connected sum in the sense of \cite{BP}. The result is not pure.
\end{exm}
%%%%%%%%%%%%%%%%%%%%%%%%%%%%%%%%%%%%%%%%%%
The \emph{strong connected sum} is a connected sum with an extra condition on the part $Z$ we delete from the union $K_1\cup K_2$. The algebraic
justification comes in the later section and here we show the following lemma.
%%%%%%%%%%%%%%%%%%%%%%%%%%%%%%%%%%%%%%%%%%
\begin{lem}\label{basiclemma} Let $W$ be a subcomplex of a simplicial complex $K$. Let
\begin{equation}\label{partZ}
Z := \{ \tau \in K \ |\ \tau\cup\sigma \not\in K, \forall \sigma \in K\backslash W\}. 
\end{equation}
Then $O_K(Z)=Z$ and $Z = W \backslash (\overline{K\backslash W})$. 
\end{lem}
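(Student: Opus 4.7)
My plan is to prove the two assertions by directly unpacking the definitions, with the only subtle point being the reverse inclusion in the second equality, where one needs a short case analysis on whether $\tau \cup \sigma$ lands in $W$ or not.

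First I would establish $Z \subseteq W$ as a preliminary observation: if some $\tau \in Z$ were in $K \setminus W$, then taking $\sigma = \tau$ in the defining condition of $Z$ would give $\tau \cup \sigma = \tau \in K$, a contradiction. Next, to show $O_K(Z) = Z$, I would verify the equivalent statement that $K \setminus Z$ is a subcomplex, as observed in Definition~\ref{defnsimp}. Equivalently, I show directly that if $\tau \in Z$ and $\tau \subseteq \sigma' \in K$, then $\sigma' \in Z$: for any $\sigma \in K \setminus W$, we have $\tau \cup \sigma \subseteq \sigma' \cup \sigma$, so if $\sigma' \cup \sigma$ were in $K$ then $\tau \cup \sigma$ would be in $K$ too (as $K$ is closed under subsets), contradicting $\tau \in Z$.

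For the equality $Z = W \setminus \overline{K\setminus W}$, I would first record the useful description
\[
\overline{K\setminus W} = \{\tau \in K \ |\ \tau \subseteq \sigma \text{ for some } \sigma \in K\setminus W\},
\]
which follows since the right-hand side is a subcomplex of $K$ containing $K \setminus W$ and is contained in any such subcomplex. For $Z \subseteq W \setminus \overline{K\setminus W}$: by the preliminary observation, $\tau \in Z$ lies in $W$, and if $\tau \subseteq \sigma$ for some $\sigma \in K \setminus W$, then $\tau \cup \sigma = \sigma \in K$, violating $\tau \in Z$. For the reverse inclusion, take $\tau \in W$ such that no $\sigma \in K\setminus W$ contains $\tau$, and suppose toward contradiction that $\tau \cup \sigma \in K$ for some $\sigma \in K\setminus W$. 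Either $\tau \cup \sigma \in W$, in which case $\sigma \subseteq \tau\cup\sigma \in W$ forces $\sigma \in W$ (as $W$ is a subcomplex), contradicting $\sigma \in K \setminus W$; or $\tau \cup \sigma \in K \setminus W$, in which case this element of $K \setminus W$ contains $\tau$, contradicting the hypothesis on $\tau$. Hence $\tau \in Z$.

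The only mild obstacle is the dichotomy step in the backward inclusion, where one must ensure that one cannot slip out of $W$ and back in again; this is precisely what the subcomplex property of $W$ rules out. All other steps are essentially bookkeeping with the definitions of $O_K$, closure, and the set $Z$.
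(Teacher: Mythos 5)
Your proof is correct and takes essentially the same route as the paper: the preliminary observation $Z\subseteq W$, the subcomplex argument for $O_K(Z)=Z$, and the inclusion $Z\subseteq W\setminus\overline{K\setminus W}$ all match the paper's unwindings. The only difference is cosmetic: for the reverse inclusion you do a direct case split on whether $\tau\cup\sigma\in W$ or not, while the paper invokes the identity $\sstar_K(K\setminus W)=\overline{K\setminus W}$; since your Case~1 is vacuous (as $\sigma\subseteq\tau\cup\sigma$ forces $\tau\cup\sigma\notin W$), the two arguments amount to the same thing.
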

%%%%%%%%%%%%%%%%%%%%%%%%%%%%%%%%%%%%%%%%%%
\begin{proof}
By definition, if $\tau \in O_K(Z)$, then there is $\tau' \in Z$ such that $\tau' \subset \tau$. Thus for all $\sigma \in K\backslash W$, $\sigma \cup \tau
\not\in K$, because if otherwise $\sigma \cup \tau' \in K$. This shows $O_K(Z) = Z$. To show $Z = W \backslash (\overline{K\backslash W})$, first observe that
$Z \subset W$. Indeed, if $\tau \in K\backslash W$, then $\tau \cup \tau =\tau \in K$ and so $\tau \not\in Z$. If $\tau \in \overline{K\backslash W}$, then
there is $\sigma \in K\backslash W$ such that $\tau \subset \sigma$ and so $\tau \cup \sigma =\sigma \in K$. Thus $Z \subset W \backslash (\overline{K\backslash
  W})$. On the other hand, let $\tau \in W \backslash \overline{K\backslash W}$. If $\tau \not \in Z$, then there is $\sigma \in K\backslash W$ such that $\tau
\cup \sigma \in W$. This means $\tau \in \sstar_K(K\backslash W)$. However, recall from Definition \ref{defnsimp} that $\sstar_K(K\backslash W) =
\overline{O_K(K\backslash W)} = \overline{K\backslash W}$. Thus $\tau \in \overline{K\backslash W}$ which is a constradiction. Thus $\tau \in Z$ and so $W
\backslash\overline{K\backslash W} \subset Z$.
\end{proof}
%%%%%%%%%%%%%%%%%%%%%%%%%%%%%%%%%%%%%%%%%%
\begin{defn}[Strong connected sum]\label{sconnected}
A connected sum $K_1~\#^Z K_2$ is called \emph{strong} if $K_1, K_2$ and $K_1\cap K_2$ are pure with the same dimension and 
\[
Z = W \backslash (\overline{K_1\backslash W}) = W \backslash (\overline{K_2\backslash W})
\]
Algebraic justification of the following definition will be explained in Section \ref{canonical}.
\end{defn}
%%%%%%%%%%%%%%%%%%%%%%%%%%%%%%%%%%%%%%%%%%
%%%%%%%%%%%%%%%%%%%%%%%%%%%%%%%%%%%%%%%%%%
\subsection{Polytope cutting and connected sum} 
%%%%%%%%%%%%%%%%%%%%%%%%%%%%%%%%%%%%%%%%%%
%%%%%%%%%%%%%%%%%%%%%%%%%%%%%%%%%%%%%%%%%%
\begin{defn}[c.f. Section 1.1 \cite{BP}]
A \emph{polytope} $\Delta$ is defined to be the convex hull of a finite set of points in $\RR^n$. Suppose that 
\[
\Delta= \{ \vec{x} \in \RR^n \ | \ \lan \vec{x}, \lambda_i\ran + \eta_i \geq 0, i=1,\cdots,m\}. 
\]
for some $\lambda_i\in (\RR^n)^*$ and $\lambda_i \in \RR$. A polytope $\Delta$ is \emph{simple} if the bounding hyperplanes $\tilde{H}_i:=\{\lan \vec{x},
\lambda_i\ran + \eta_i = 0\}$ are in general position, i.e. if the dimension of $\Delta$ is $r$, then there are exactly $r$ hyperplanes $\tilde{H}_i$ meeting at
each vertex of $\Delta$. We call $H_i:=\Delta \cap \tilde{H}_i$ a \emph{facet} for each $i=1,\cdots,m$. Note that $H_i$ is $r-1$ dimensional or empty. If $H_i$
is empty, we call it a \emph{ghost facet}.

For a simple polytope $\Delta$ with facets $H_i, i=1,\cdots,m$, the associated simplicial complex $K_{\Delta}$ is a simplicial complex on $[m]$ defined by 
\[
\sigma \subset K_{\Delta} \Leftrightarrow \sigma=\varnothing \ \mbox{ or } \  \bigcap_{i\in\sigma} H_i \not=\varnothing.
\]
\end{defn}
%%%%%%%%%%%%%%%%%%%%%%%%%%%%%%%%%%%%%%%%%%
\begin{defn}[Generic cut]\label{cut}
Let $\Delta \subset \RR^n$ be a $n$-dimensional simple polytope with non-ghost facets $H_i, i=1,\cdots,m$. Consider a hyperplane
\[
\calH := \{\vec{x} \in \RR^n\ |\ \lan\vec{x}, \lambda_0 \ran +\xi = 0\}
\]
and the corresponding closed half spaces $\tilde{H}_{+} = \{\lan\vec{x}, \lambda_0 \ran +\xi \geq 0\}$ and $\tilde{H}_{-} = \{\lan\vec{x}, \lambda_0 \ran
+\xi \leq 0\}$. A \emph{generic cut} of $\Delta$ is given by the pair $(\Delta, \calH)$ such that $\calH, \tilde{H}_1,\cdots, \tilde{H}_m$ are in
general position and $H_o:=\calH \cap \Delta \not=\varnothing$. In this case, $\Delta_+:=\Delta \cap \tilde{H}_{+}$ and $\Delta_-:=\Delta \cap
\tilde{H}_{+}$ are non-empty simple polytopes.

The simplicial complexes $K_{\Delta}, K_+, K_-$ associated to $\Delta, \Delta_+, \Delta_-$ to be defined as simplicial complexes defined on the vertex set $\widetilde{[m]}:=[m] \cup \{o\}$:
\begin{eqnarray*}
K_{\Delta}&:=& \{ \sigma \subset \widetilde{[m]} \ |\ \sigma \subset [m] \ \mbox{and}\ \cap_{i\in \sigma} H_i \not=\varnothing\} \cup \{\varnothing\}\\ 
K_+&:=&  \{ \sigma \subset \widetilde{[m]} \ |\  \cap_{i\in \sigma} (H_i\cap \Delta_+) \not=\varnothing\} \cup \{\varnothing\}\\
K_-&:=&   \{ \sigma \subset \widetilde{[m]} \ |\  \cap_{i\in \sigma} (H_i\cap \Delta_-) \not=\varnothing\} \cup \{\varnothing\}.
\end{eqnarray*}
%Let \begin{eqnarray*}
%\sfA_+ &:=& \{ i \in [m] \ |\ H_i \cap \Delta_+ = H_i\}\\
%\sfA_- &:=& \{ i \in [m] \ |\ H_i \cap \Delta_- = H_i\}\\
%\sfB &:=& \{ i \in [m] \ |\ H_i \cap H_o \not=\varnothing\}
%\end{eqnarray*}
%so that $\widetilde{[m]} = \sfA_+ \sqcup \sfA_- \sqcup \sfB \sqcup \{o\}$.
\end{defn}
%%%%%%%%%%%%%%%%%%%%%%%%%%%%%%%%%%%%%%%%%%
\begin{lem}\label{gh}
\begin{eqnarray}
K_+ \cap K_- &=& \sstar_{K_+\cup K_-}(o)= \sstar_{K_+}(o) = \sstar_{K_-}(o) \label{g}\\
(K_+ \cup K_-) \backslash K_{\Delta} &=&  O_{K_+\cup K_-}(o) = O_{K_+}(o)=O_{K_-}(o) \label{h}
\end{eqnarray}
\end{lem}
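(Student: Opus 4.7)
The plan is to establish \eqref{h} first by analysing the open neighborhoods of $o$ directly, and then deduce \eqref{g} by taking closures and applying a single convexity argument.

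First I would show that $O_{K_+}(o) = O_{K_-}(o) = O_{K_+ \cup K_-}(o)$. The key observation is that $H_o = \calH \cap \Delta \subseteq \Delta_+ \cap \Delta_-$, so for any face $\sigma$ containing $o$ the intersection
\[
\bigcap_{i \in \sigma}(H_i \cap \Delta_{\pm}) \;=\; (H_o \cap \Delta_{\pm}) \cap \bigcap_{i \in \sigma \setminus \{o\}} H_i \;=\; H_o \cap \bigcap_{i \in \sigma \setminus \{o\}} H_i
\]
is independent of the sign. Hence $\sigma \in K_+ \Leftrightarrow \sigma \in K_-$ whenever $o \in \sigma$, so the three open neighborhoods coincide. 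Next I would identify this common set with $(K_+ \cup K_-) \setminus K_\Delta$: if $\sigma \in K_+ \cup K_-$ and $o \notin \sigma$, then $\sigma \subseteq [m]$ and some non-empty $\bigcap_{i \in \sigma}(H_i \cap \Delta_{\pm}) \subseteq \bigcap_{i \in \sigma} H_i$ witnesses $\sigma \in K_\Delta$; conversely, $o \in \sigma$ forces $\sigma \notin K_\Delta$ since by Definition \ref{cut} $K_\Delta$ is a complex on $[m]$. Combining these observations gives \eqref{h}.

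For \eqref{g}, I would use that $\sstar_{K}(o) = \overline{O_{K}(o)}$ by Definition \ref{defnsimp}, and remark that because $K_+, K_-$, and $K_+ \cup K_-$ are all genuine simplicial complexes, each star is obtained from the common $O_{\cdot}(o)$ by adjoining sub-faces that are automatically in the ambient complex; hence all three stars agree. The inclusion $\sstar_{K_+}(o) \subseteq K_+ \cap K_-$ is then immediate: any $\tau \subseteq \sigma$ with $\sigma \in O_{K_+}(o) = O_{K_-}(o)$ lies in both $K_+$ and $K_-$. For the reverse inclusion, given $\tau \in K_+ \cap K_-$ with $o \notin \tau$, the face $F := \bigcap_{i \in \tau} H_i$ of $\Delta$ meets both $\Delta_+$ and $\Delta_-$, and I would invoke a convexity argument: $F$ is a non-empty convex (hence connected) subset of $\Delta$, so if $F$ avoided $\calH$ it would lie entirely in one open half-space, contradicting the assumption. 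Therefore $F \cap H_o \neq \varnothing$, so $\tau \cup \{o\}$ is a face of both $K_+$ and $K_-$ containing $o$, and $\tau$ is a sub-face of an element of $O_{K_+}(o)$, proving $\tau \in \sstar_{K_+}(o)$.

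The main obstacle is the final convexity step, which relies on the elementary but essential geometric fact that a face of a simple polytope, being itself convex, cannot straddle a hyperplane $\calH$ without meeting $\calH$. The rest of the proof is bookkeeping: translating the face-intersection definitions of $K_\Delta$ and $K_{\pm}$ into statements about the star and open neighborhood of the ghost vertex $o$, and carefully tracking which faces contain $o$ versus which are contained in $[m]$.
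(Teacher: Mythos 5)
Your proof is correct and follows the same essential route as the paper, with one noteworthy improvement in rigor. The paper's proof of \eqref{g} opens with the claim that ``by definition'' $\sigma \in K_+ \cap K_-$ iff $\sigma = \varnothing$ or $(\cap_{i\in\sigma} H_i) \cap \Delta_+ \cap \Delta_- \neq \varnothing$; but what is actually immediate from the definitions is only that $(\cap_{i\in\sigma} H_i) \cap \Delta_+$ and $(\cap_{i\in\sigma} H_i) \cap \Delta_-$ are \emph{separately} nonempty, and passing from ``each nonempty'' to ``the intersection nonempty'' is precisely the convexity argument you spell out (a nonempty convex face $F$ of $\Delta$ meeting both closed half-spaces must meet the separating hyperplane, hence $F \cap H_o \neq \varnothing$). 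So you have made explicit a step the paper elides. You also reverse the order, establishing \eqref{h} first via the observation that when $o \in \sigma$ the intersection $\bigcap_{i\in\sigma}(H_i \cap \Delta_\pm) = H_o \cap \bigcap_{i\in\sigma\setminus\{o\}} H_i$ is sign-independent, which is the same computation the paper performs for its proof of \eqref{h}, and then deducing \eqref{g} by taking closures; the paper instead derives \eqref{g} directly and proves \eqref{h} afterward. Both orderings work, and your version is slightly cleaner in that the closure step makes the equality of stars an immediate formal consequence of the equality of open neighborhoods.
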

\begin{proof}
By definition, $\sigma \in K_+ \cap K_-$ iff $\sigma = \varnothing$ or $(\cap_{i\in \sigma} H_i) \cap \Delta_+ \cap \Delta_- \not= \varnothing$. Since $\Delta_+
\cap \Delta_- = H_o$, $\sigma \in K_+ \cap K_-$ iff $\sigma = \varnothing$ or $(\cap_{i\in \sigma} H_i \cap \Delta_+) \cap H_o = (\cap_{i\in \sigma} H_i \cap
\Delta_-) \cap H_o\not= \varnothing$. Therefore
\[
K_+ \cap K_- = \underbrace{\{ \sigma \in K_+ \ |\ \sigma \cup \{o\} \in K_+ \}}_{\sstar_{K_+}(o)} = \underbrace{\{ \sigma \in K_- \ |\ \sigma \cup \{o\} \in K_- \}}_{\sstar_{K_-}(o)}.
\]
By definition and $\Delta_+ \cup \Delta_- = \Delta$, $\sigma \in (K_+ \cup K_-) \backslash K_{\Delta}$ iff $\sigma \in K_+ \cup K_-$ and $o \in \sigma$. Thus
\[
(K_+ \cup K_-) \backslash K_{\Delta} = \{ \sigma \subset \tilde{[m]}\ |\ o \in \sigma, \mbox{ and }\sigma \in K_+ \cup K_-  \} = O_{K_+\cup K_-}(o).
\]
On the other hand, when $o \in \sigma$, $\sigma \in K_+$ iff $\sigma \in K_-$. Indeed, $\cap_{i\in \sigma} (H_i\cap \Delta_+) = (\cap_{i\in \sigma} H_i ) \cap
H_o = \cap_{i\in \sigma} (H_i\cap \Delta_-)$ if $o\in \sigma$. Thus $O_{K_+\cup K_-}(o) = O_{K_+}(o)=O_{K_-}(o)$.
\end{proof}
%%%%%%%%%%%%%%%%%%%%%%%%%%%%%%%%%%%%%%%%%%
\begin{thm}\label{P1}
If $(\Delta, \tilde{H}_o)$ is a generic cut, then $K_{\Delta}$ is the strong connected sum $K_+ ~\#^Z K_-$ where $Z = O_{K_+\cup K_-}(o)$.
\end{thm}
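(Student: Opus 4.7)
The plan is to prove the theorem in two stages: first verify that the connected sum $K_+~\#^Z K_-$ equals $K_\Delta$ on the nose, and then check that the extra structural hypothesis in Definition \ref{sconnected} is satisfied.

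For the equality $K_+~\#^Z K_- = K_\Delta$, the key ingredient is Lemma \ref{gh}, which already identifies $Z=O_{K_+\cup K_-}(o)=(K_+\cup K_-)\setminus K_\Delta$ and tells us that $Z\subset K_+\cap K_-$. I would first verify that the data $Z$ actually defines a connected sum in the sense of Definition \ref{defn:conn sum}: concretely, a face $\sigma\in K_+\cup K_-$ containing some $\tau\in Z$ must itself contain $o$, so $O_{K_+\cup K_-}(Z)=Z\subset K_+\cap K_-$. Then by definition $K_+~\#^Z K_- = \Delt_Z(K_+\cup K_-) = (K_+\cup K_-)\setminus Z$, and this equals $K_\Delta$ using the second equality in Lemma \ref{gh}. (The inclusion $K_\Delta\subset K_+\cup K_-$ is automatic from $\Delta=\Delta_+\cup \Delta_-$.)

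For strongness, I first dispose of the purity requirement. Since $\Delta_\pm$ are $n$-dimensional simple polytopes, $K_\pm$ are pure of dimension $n-1$, the facets of $K_\pm$ corresponding to vertices of $\Delta_\pm$. The common subcomplex $W=K_+\cap K_- = \sstar_{K_+}(o)$ is pure of dimension $n-1$ as well: its maximal faces are necessarily of the form $\sigma$ with $o\in\sigma$ (otherwise $\sigma\cup\{o\}$ would remain in $W$), and these correspond to the vertices of the $(n-1)$-dimensional facet $H_o$ of $\Delta_+$ (equivalently, of $\Delta_-$), all of which have size $n$.

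The main work is verifying the identity $Z = W\setminus \overline{K_+\setminus W} = W\setminus \overline{K_-\setminus W}$. The inclusion $Z\subset W\setminus \overline{K_+\setminus W}$ is immediate: if $o\in\tau$, then any face of $K_+$ containing $\tau$ also contains $o$ and so lies in $W=\sstar_{K_+}(o)$, so no face of $K_+\setminus W$ contains $\tau$. The reverse inclusion is the geometric heart of the argument, which I would prove by contradiction. Suppose $\tau\in W$ with $o\notin\tau$; the polytopal face $F_\tau:=\bigcap_{i\in\tau}H_i\cap\Delta_+$ meets $H_o$ (because $\tau\cup\{o\}\in K_+$), but by the defining property of simple polytopes (each face is the intersection of a unique set of facets containing it, whose cardinality is the codimension) $F_\tau$ cannot be contained in $H_o$ unless $o\in\tau$. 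Therefore $F_\tau\cap H_o$ is a proper face of $F_\tau$, so $F_\tau$ has a vertex $v$ off of $H_o$; the corresponding maximal face $\sigma$ of $K_+$ satisfies $\tau\subseteq \sigma$, $o\notin \sigma$, and $\sigma\cup\{o\}\notin K_+$, so $\sigma\in K_+\setminus W$ and $\tau\in\overline{K_+\setminus W}$, a contradiction. The same argument with $\Delta_-$ in place of $\Delta_+$ handles the other equality. The main obstacle is this geometric step invoking the simple-polytope property; everything else is a bookkeeping consequence of Lemma \ref{gh}.
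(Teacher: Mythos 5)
Your proof is correct and follows essentially the same route as the paper: use Lemma \ref{gh} to identify $Z$ and hence $K_\Delta=\Delt_Z(K_+\cup K_-)$, then verify $Z=W\setminus\overline{K_\pm\setminus W}$ by producing, for any $\tau\in\sstar_{K_\pm}(o)$ with $o\notin\tau$, a vertex of the face $F_\tau$ off of $H_o$ whose index set $\sigma$ lies in $K_\pm\setminus W$ and contains $\tau$. The only cosmetic difference is that you also explicitly check the purity conditions required by Definition \ref{sconnected}, a point the paper's proof leaves implicit.
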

%%%%%%%%%%%%%%%%%%%%%%%%%%%%%%%%%%%%%%%%%%
\begin{proof}
From Lemma \ref{gh}, it is clear that $K_{\Delta}$ is the connected sum $K_+ ~\#^Z K_-$. We need to show $O_{K_{\pm}}(o) = W\backslash
(\overline{K_{\pm}\backslash W})$ where $W:=K_+ \cap K_- = \sstar_{K_+}(o) = \sstar_{K_-}(o)$ (See Lemma \ref{gh}). Suppose $\tau \in O_{K_+}(o)$. Since $\{o\}
\cup \sigma \not\in K_+$ for all $\sigma \in K_+\backslash W$, we have $\tau \cup \sigma \not\in K_+$ for all $\sigma \in K_+\backslash W$. Thus $O_{K_+}(o)
\subset W\backslash (\overline{K_+\backslash W})$ (See Lemma \ref{basiclemma}). To prove $W\backslash (\overline{K_+\backslash W}) \subset O_{K_+}(o)$, we show
that $\tau \in \sstar_{K_+}(o) \backslash O_{K_+}(o)$ implies $\tau \in \overline{K_+\backslash \sstar_{K_+}(o)}$. Since $\tau \in \sstar_{K_+}(o)$ and
$o\not\in\tau$, we have $\tau \subset \sfB$ such that $(\cap_{i\in \tau} H_i) \cap H_o \not=\varnothing$. Since the cutting is generic, $\dim \cap_{i\in \tau}
H_i \geq 1$ and $\cap_{i\in \tau} H_i$ has a vertex contained in $\Delta_+$ but not contained in $H_0$. Let $\cap_{i \in \sigma} H_i$ be such a vertex. Then
$\sigma \in K_+ \backslash W$. Since $\tau \subset \sigma$, $\tau \in \overline{K_+\backslash W}$. The same argument may be used to prove $O_{K_-}(o) = W\backslash
(\overline{K_-\backslash W})$.
\end{proof}
%%%%%%%%%%%%%%%%%%%%%%%%%%%%%%%%%%%%%%%%%%
\begin{lem}\label{lem+}
For $\sigma \subset \widetilde{[m]}$, let $F_{\sigma} := \cap_{i\in \sigma} H_i$. Let $Z=\{ \sigma \subset \widetilde{[m]} \ |\ F_{\sigma} \not=\varnothing
\mbox{ and } F_{\sigma} \subset \Delta_+\!\backslash H_o \}$.
\begin{eqnarray}
K_+ \cap K_{\Delta} &=& \overline{Z} \\ (K_+ \cup K_{\Delta}) \backslash K_- &=& Z
\end{eqnarray}
\end{lem}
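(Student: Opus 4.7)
The plan is to derive both identities from the definitions of $K_+$, $K_-$, and $K_{\Delta}$, using the genericity hypothesis of Definition \ref{cut}. As a warm-up I would record the following easy consequences: for any non-empty $\sigma \subset \widetilde{[m]}$, one has $\sigma \in K_+$ iff $F_\sigma \cap \Delta_+ \neq \varnothing$ (and similarly for $K_-$), while $\sigma \in K_{\Delta}$ iff $\sigma \subset [m]$ and $F_\sigma \neq \varnothing$; furthermore, if $o \in \sigma$ then $F_\sigma \subset H_o = \Delta_+ \cap \Delta_-$, so membership in $Z$ automatically forces $\sigma \subset [m]$. With these in hand, the identities reduce to nearly routine combinatorics modulo one geometric step.

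For part (2), I would run a case analysis. If $\sigma \in Z$, then $F_\sigma \neq \varnothing$ and $\sigma \subset [m]$ give $\sigma \in K_{\Delta}$, while $F_\sigma \subset \Delta_+ \backslash H_o$ gives $F_\sigma \cap \Delta_- = \varnothing$, so $\sigma \notin K_-$. Conversely, take $\sigma \in (K_+ \cup K_{\Delta}) \backslash K_-$. If $o \in \sigma$, then $F_\sigma \subset H_o \subset \Delta_-$, so $\sigma \in K_+$ would force $\sigma \in K_-$, contradicting the assumption; hence $\sigma \subset [m]$. In either case $\sigma \in K_+$ or $\sigma \in K_{\Delta}$ we then have $F_\sigma \neq \varnothing$, so $\sigma \in K_{\Delta}$, and $\sigma \notin K_-$ rewrites as $F_\sigma \cap \Delta_- = \varnothing$, which together with $F_\sigma \subset \Delta$ yields $F_\sigma \subset \Delta_+ \backslash H_o$; hence $\sigma \in Z$.

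For part (1), the inclusion $\overline{Z} \subset K_+ \cap K_{\Delta}$ is immediate: if $\tau \subset \sigma \in Z$, then $F_\tau \supset F_\sigma \neq \varnothing$ and $F_\tau \cap \Delta_+ \supset F_\sigma$, so $\tau \in K_+ \cap K_{\Delta}$. For the reverse inclusion, I would take a non-empty $\tau \in K_+ \cap K_{\Delta}$ and produce $\sigma \in Z$ with $\tau \subset \sigma$ as follows. Since $F_\tau$ is a face of the simple polytope $\Delta$ with $F_\tau \cap \Delta_+ \neq \varnothing$, I would argue that $F_\tau$ must carry a vertex $v$ lying strictly in $\Delta_+ \backslash H_o$: otherwise every vertex of $F_\tau$ sits in the open half-space $\{\lan x, \lambda_0 \ran + \xi < 0\}$ (using the generic-cut assumption that no vertex of $\Delta$ lies on $\calH$), and convexity would force $F_\tau$ itself to lie there, contradicting $F_\tau \cap \Delta_+ \neq \varnothing$. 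Since $\Delta$ is simple, the vertex $v$ is the intersection of exactly $n$ facets, so $v = F_\sigma$ for some $\sigma \subset [m]$ with $|\sigma| = n$; then $F_\sigma = \{v\} \subset \Delta_+ \backslash H_o$ is non-empty, giving $\sigma \in Z$, and $v \in F_\tau$ forces $\tau \subset \sigma$ because each facet $H_i$ with $i \in \tau$ must contain $v$.

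The only real obstacle is the geometric step in part (1), namely exhibiting a vertex of $F_\tau$ strictly on the $\Delta_+$ side of $\calH$; this is a brief convexity argument, but it is where the generic-cut hypothesis is genuinely used. The rest of the proof is bookkeeping organised by whether $o \in \sigma$ and whether $\sigma$ lies in $K_+ \backslash K_{\Delta}$ or in $K_{\Delta}$.
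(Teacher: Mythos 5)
Your proof is correct and follows essentially the same route as the paper's: both parts reduce to the elementary descriptions of $K_+$, $K_-$, $K_\Delta$ in terms of the faces $F_\sigma$, and the reverse inclusion in part (1) hinges on the same geometric step of producing a vertex of $F_\tau$ lying in $\Delta_+\backslash H_o$, which you justify cleanly by convexity and the fact that genericity excludes vertices of $\Delta$ from $\calH$. Your version is slightly streamlined in that you avoid the paper's (inessential) case split on whether $\sigma\in Z$ and make the convexity argument explicit, but the underlying idea is identical.
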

\begin{proof}
$K_+ \cap K_{\Delta}$ consists of $\varnothing$ and $\sigma \subset [m]$ such that $F_{\sigma}\cap \Delta_+ \not=\varnothing$. Since $Z \subset K_+ \cap
  K_{\Delta}$, we have $\overline{Z} \subset K_+ \cap K_{\Delta}$. Suppose that $\sigma \in K_+ \cap K_{\Delta}$ and $\sigma \not \in Z$. Since $F_{\sigma}
  \not\subset \Delta_+\backslash H_o$ and $F_{\sigma}\cap \Delta_+ \not=\varnothing$, we have $F_{\sigma} \cap H_o \not=\varnothing$. Thus $\dim F_{\sigma} \geq
  1$ and so there is a vertex $F_{\tau}$ of $F_{\sigma}$ contained in $\Delta_+\backslash H_o$, which means $\tau \in Z$. Since $\sigma \subset \tau$, we have
  $\sigma \in \overline{Z}$. Thus $K_+ \cap K_{\Delta} \subset \overline{Z}$.

Since $F_{\sigma} \subset \Delta_+\backslash H_o$ iff $F_{\sigma} \cap \Delta_- = \varnothing$, it follows that  $(K_+ \cup K_{\Delta}) \backslash K_- =  Z$.
\end{proof}
%%%%%%%%%%%%%%%%%%%%%%%%%%%%%%%%%%%%%%%%%%
\begin{lem} Let $Z=\{ \sigma \subset \widetilde{[m]} \ |\  F_{\sigma} \not=\varnothing \mbox{ and } F_{\sigma} \subset \Delta_+\!\backslash H_o \}$.
\begin{eqnarray}
K_+\backslash \overline{Z} &=& O_{K_+}(o) \label{a}\\
K_{\Delta} \backslash \overline{Z} &=& \{ \sigma \subset \widetilde{[m]} \ |\  F_{\sigma} \not=\varnothing \mbox{ and } F_{\sigma} \subset \Delta_-\!\backslash H_o \}.
\end{eqnarray}
\end{lem}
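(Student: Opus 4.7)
The plan is to prove both equalities by first establishing a common reformulation of membership in $\overline{Z}$, then applying the genericity of the cut.

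The key reformulation is: for $\sigma\subset\widetilde{[m]}$, one has $\sigma\in\overline{Z}$ if and only if $F_\sigma$ contains a vertex of $\Delta$ that lies in $\Delta_+\backslash H_o$. For $(\Leftarrow)$, such a vertex $v$ is realized as $F_\tau$ for some $\tau\subset[m]$ with $|\tau|=n$ and $\sigma\subset\tau$ by simplicity of $\Delta$, and then $F_\tau=\{v\}\subset\Delta_+\backslash H_o$ gives $\tau\in Z$. For $(\Rightarrow)$, any $\tau\in Z$ with $\sigma\subset\tau$ satisfies $F_\tau\subset F_\sigma\cap(\Delta_+\backslash H_o)$, so any vertex of $F_\tau$ is a vertex of $\Delta$ lying in $F_\sigma\cap(\Delta_+\backslash H_o)$. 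Next, the genericity hypothesis (in general position) forces no vertex of $\Delta$ to lie on $H_o$; consequently, for $\sigma\subset[m]$ with $F_\sigma\neq\varnothing$, the vertices of $F_\sigma$ partition into those in $\Delta_+\backslash H_o$ and those in $\Delta_-\backslash H_o$, yielding the dichotomy: either some vertex of $F_\sigma$ lies in $\Delta_+\backslash H_o$, or $F_\sigma\subset\Delta_-\backslash H_o$ by convexity (in which case $F_\sigma\cap\Delta_+=\varnothing$). When $o\in\sigma$ one instead has $F_\sigma\subset H_o$, so $F_\sigma$ contains no vertex of $\Delta$ at all.

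With these observations the first equality follows. If $\sigma\in O_{K_+}(o)$ then $o\in\sigma$ forces $F_\sigma\subset H_o$, hence no $\Delta$-vertex in $F_\sigma$, and so $\sigma\notin\overline{Z}$ by the reformulation; conversely, if $\sigma\in K_+\setminus\overline{Z}$ had $o\notin\sigma$, then $F_\sigma\cap\Delta_+\neq\varnothing$ together with the dichotomy would produce a vertex of $F_\sigma$ in $\Delta_+\backslash H_o$, giving the contradiction $\sigma\in\overline{Z}$. For the second equality, writing $Z_-$ for the right-hand side, any $\sigma\in K_\Delta$ automatically has $\sigma\subset[m]$ and $F_\sigma\neq\varnothing$; the reformulation and the dichotomy then give $\sigma\notin\overline{Z}$ iff no vertex of $F_\sigma$ lies in $\Delta_+\backslash H_o$, iff $F_\sigma\subset\Delta_-\backslash H_o$, iff $\sigma\in Z_-$. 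The inclusion $Z_-\subset K_\Delta$ is immediate, since $F_\sigma\neq\varnothing$ and $F_\sigma\subset\Delta_-\backslash H_o$ force $o\notin\sigma$.

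The hardest part is really just the bookkeeping around the extra vertex $o$: one must separately handle $o$-containing simplices, for which $F_\sigma$ sits inside the cutting facet $H_o$ and is not a genuine face of $\Delta$, and convexity-plus-genericity must be invoked only for $\sigma\subset[m]$. Once the reformulation above is in place, everything else reduces to an elementary convexity argument for $F_\sigma$ viewed as the convex hull of its $\Delta$-vertices.
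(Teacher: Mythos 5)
Your proof is correct. The route is genuinely different from the paper's: the paper's proof is a one-liner that leans on the immediately preceding lemma (Lemma \ref{lem+}, which establishes $K_+ \cap K_\Delta = \overline{Z}$), so that both set equalities drop out by intersecting and complementing; in fact the reference ``(\ref{a})'' in the printed proof is almost certainly a mislabel for that earlier result. You instead re-derive a characterization of $\overline{Z}$ from scratch --- $\sigma \in \overline{Z}$ iff $F_\sigma$ contains a vertex of $\Delta$ lying in $\Delta_+\setminus H_o$ --- which is essentially the content of Lemma \ref{lem+} repackaged in vertex-language, and then combine it with the genericity/convexity dichotomy (no vertex of $\Delta$ lies on $H_o$, so a face of $\Delta$ with no vertex on the $\Delta_+$ side sits entirely in $\Delta_-\setminus H_o$). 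What the paper's approach buys is brevity and modularity: the geometric content is isolated in one place. What yours buys is self-containment: the argument is readable without the earlier lemma, and it makes the role of genericity (no vertex on $\calH$) and the convexity argument explicit rather than implicit. Both are sound; your separate treatment of $o$-containing simplices, where $F_\sigma \subset H_o$ and hence has no $\Delta$-vertices, is precisely the bookkeeping the paper quietly absorbs into Lemma \ref{lem+}.
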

\begin{proof}
By definition and (\ref{a}), $\sigma \in K_+\backslash \overline{Z}$ if and only if $o \in \sigma$ and $F_{\sigma}\not=\varnothing$. Thus $K_+\backslash
\overline{Z} = O_{K_+}(o)$. Also by definition and (\ref{a}), $\sigma \in K_{\Delta}\backslash \overline{Z}$ if and only if $F_{\sigma}\not=\varnothing$ and
$F_{\sigma} \subset \Delta_-\backslash H_o$.
\end{proof}
%%%%%%%%%%%%%%%%%%%%%%%%%%%%%%%%%%%%%%%%%%
\begin{thm}\label{P2}
Let $(\Delta, \tilde{H}_o)$ be a generic cut and let $Z=\{ \sigma \subset \widetilde{[m]} \ |\ F_{\sigma} \not=\varnothing \mbox{ and } F_{\sigma} \subset
\Delta_+\!\backslash H_o \}$.  Then $K_-$ is the strong connected sum $K_+~\#^Z K_{\Delta}$.
\end{thm}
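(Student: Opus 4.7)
The plan is to verify the two defining conditions of a strong connected sum directly, using the preceding lemmas and the genericity of the cut.

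First I would establish that $K_- = \Delta_Z(K_+ \cup K_\Delta)$. Lemma \ref{lem+} gives $(K_+ \cup K_\Delta)\setminus K_- = Z$ and $K_+ \cap K_\Delta = \overline{Z}$, so $Z \subset \overline{Z} = K_+\cap K_\Delta$. Since $K_-$ is a subcomplex, Lemma \ref{basiclemma} applied with $K := K_+\cup K_\Delta$ and $W := K_-$ yields $O_{K_+\cup K_\Delta}(Z)=Z$, so in particular $O_{K_+\cup K_\Delta}(Z)\subset K_+\cap K_\Delta$. Hence $\Delta_Z(K_+\cup K_\Delta) = (K_+\cup K_\Delta)\setminus Z = K_-$, and the connected sum hypothesis of Definition \ref{defn:conn sum} is satisfied.

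Next I would check purity. Simplicity of $\Delta$ and $\Delta_\pm$ gives that $K_\Delta$ and $K_\pm$ are pure of dimension $n-1$. For $W=\overline{Z}=K_+\cap K_\Delta$, each facet of $W$ corresponds to some $\sigma\subset[m]$ with $|\sigma|=n$ and $F_\sigma$ a vertex of $\Delta$ lying in $\Delta_+\setminus H_o$, again giving dimension $n-1$.

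The first equality $Z = W\setminus \overline{K_+\setminus W}$ follows cleanly from Lemma \ref{gh}: the previous lemma shows $K_+\setminus W = O_{K_+}(o)$, so $\overline{K_+\setminus W} = \sstar_{K_+}(o) = K_+\cap K_-$. Then
\[
W\setminus \overline{K_+\setminus W} = (K_+\cap K_\Delta)\setminus(K_+\cap K_-) = K_+\cap (K_\Delta\setminus K_-),
\]
and for $\sigma\subset[m]$, $\sigma\in K_\Delta\setminus K_-$ iff $F_\sigma\neq\varnothing$ and $F_\sigma\cap\Delta_-=\varnothing$ iff $F_\sigma\subset\Delta_+\setminus H_o$ iff $\sigma\in Z$. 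Since $Z\subset K_+$ automatically, this gives the equality.

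The second equality $Z = W\setminus \overline{K_\Delta\setminus W}$ requires the main technical step: establishing $\overline{Z''}=K_-\cap K_\Delta$, where $Z''=K_\Delta\setminus W = \{\sigma: F_\sigma\neq\varnothing,\, F_\sigma\subset \Delta_-\setminus H_o\}$. The inclusion $\overline{Z''}\subset K_-\cap K_\Delta$ is immediate from $F_\sigma\subset F_\tau$ when $\tau\subset\sigma$. For the reverse inclusion, take $\tau\in K_-\cap K_\Delta$, so $F_\tau\neq\varnothing$ and $F_\tau\cap\Delta_-\neq\varnothing$. If $F_\tau$ has a point with $\ell<0$ (where $\ell(\vec x)=\lan \vec x,\lambda_0\ran+\xi$), then $\ell$ attains its minimum on $F_\tau$ at a vertex $F_\sigma\subset\Delta_-\setminus H_o$ with $\sigma\supset\tau$ and $|\sigma|=n$, so $\sigma\in Z''$ and $\tau\in\overline{Z''}$. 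The hard part is ruling out the alternative $F_\tau\cap\Delta_-\subset H_o$: this would force $F_\tau\cap H_o\neq\varnothing$, but genericity of the cut (i.e.\ $\calH$ is in general position with the $\tilde H_i$) forbids $F_\tau\subset H_o$, so $F_\tau\cap H_o$ is a proper face of $F_\tau$ of codimension one and $F_\tau$ must contain points on both strict sides of $\calH$, a contradiction. The main obstacle is precisely this genericity argument — it is the geometric hypothesis on the cut translating into the combinatorial condition required for the strong connected sum. Once $\overline{Z''}=K_-\cap K_\Delta$ is established, the same manipulation as before gives $W\setminus\overline{Z''} = K_+\cap(K_\Delta\setminus K_-) = Z$, completing the proof.
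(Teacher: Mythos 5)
Your proposal is correct and reaches the same conclusion through the same underlying geometric fact (the genericity of the cut forbids any vertex of $\Delta$ from lying on $\calH$), but it organizes the verification quite differently from the paper. The paper checks the two strong-connected-sum equalities $Z = W\setminus\overline{K_+\setminus W}$ and $W\cap\overline{K_+\setminus W} = W\cap\overline{K_\Delta\setminus W}$ element by element, invoking the geometry separately in each containment; you instead reduce everything to set-theoretic identities among $K_+$, $K_-$, $K_\Delta$, $W$ using Lemmas \ref{lem+} and \ref{gh}, and isolate the geometric content in the single combinatorial claim $\overline{K_\Delta\setminus W} = K_-\cap K_\Delta$. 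This packaging is cleaner and makes explicit what the genericity hypothesis is doing; it also makes the purity check visible (the paper glosses over it). Both routes are valid.

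Two small flags. First, the invocation of Lemma \ref{basiclemma} with $K:=K_+\cup K_\Delta$, $W:=K_-$ does not yield $O_{K_+\cup K_\Delta}(Z)=Z$ for \emph{your} $Z$: that lemma's $Z$ is $K_-\setminus\overline{(K_+\cup K_\Delta)\setminus K_-}$, which sits inside $K_-$, whereas your $Z$ is its complement $(K_+\cup K_\Delta)\setminus K_-$. The correct (and simpler) justification is the observation in Definition \ref{defnsimp} that $O_K(Z)=Z$ if and only if $K\setminus Z$ is a subcomplex, applied to $(K_+\cup K_\Delta)\setminus Z = K_-$. Second, the phrase ``$F_\tau\cap H_o$ is a proper face of $F_\tau$ of codimension one'' is not accurate: since $\ell\geq 0$ on $F_\tau$ in the case you are ruling out, $\calH$ is a supporting hyperplane, so $F_\tau\cap H_o$ can be a face of any dimension. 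The clean argument is: $F_\tau\cap H_o$ is a nonempty face of $F_\tau$, hence contains a vertex of $F_\tau$, which is a vertex of $\Delta$ lying on $\calH$ --- impossible by genericity. (Equivalently: by genericity every vertex of $F_\tau$ satisfies $\ell>0$ strictly, so $\ell>0$ on all of $F_\tau$, contradicting $F_\tau\cap\Delta_-\neq\varnothing$.) With those two points repaired, the argument is complete.
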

%%%%%%%%%%%%%%%%%%%%%%%%%%%%%%%%%%
\begin{proof}
From Lemma \ref{lem+}, $K_-$ is the connected sum $K_+~\#^Z K_{\Delta}$. We only need to prove it is strong. Let $W:=\overline{Z} = K_+ \cap K_{\Delta}
$. First we show that $Z= W\backslash (\overline{K_+\backslash W}) = W \backslash\! \sstar_{K_+}(o)$. Suppose $\sigma \in Z$. If $\sigma \in \sstar_{K_+}(o)$,
then there must be $\tau \in O_{K_+}(o)$ such that $\sigma \subset\tau$. Since $o \in \tau$, we have $F_{\sigma} \cap H_o\not=\varnothing$ which contradicts
with $F_{\sigma} \subset \Delta_+\backslash H_o$. Thus $Z \subset W \backslash\! \sstar_{K_+}(o)$. On the other hand, if $\sigma \in W \backslash\!
\sstar_{K_+}(o)$, then $F_{\sigma}\cap \Delta_+\not=\varnothing$ and there is no vertex of $F_{\sigma}$ that lies on $H_o$. Therefore $F_{\sigma} \subset
\Delta_+\backslash H_o$, i.e. $\sigma \in Z$. Finally we show that $W\backslash (\overline{K_+\backslash W}) = W\backslash (\overline{K_{\Delta}\backslash
  W})$. Let $\varnothing\not=\sigma \in W \cap \overline{K_+\backslash W}$. Then $\sigma \subset [m]$ and $F_{\sigma} \cap H_o \not=\varnothing$. Thus $\dim
F_{\sigma}\geq 1$ and there is a vertex $F_{\tau}$ of $F_{\sigma}$ that lies in $\Delta_-\backslash H_o$. Since $\tau \in K_{\Delta} \backslash \overline{Z}$,
we have $\sigma \in \overline{K_+\backslash W}$. On the other hand, suppose that $\varnothing\not=\sigma \in W \cap \overline{K_{\Delta}\backslash W}$, then
$F_{\sigma} \cap \Delta_+\not=\varnothing$ and there is a vertex of $F_{\sigma}$ that lies in $\Delta_-\backslash H_o$. Thus $F_{\sigma} \cap
H_o\not=\varnothing$ which implies $\sigma \in \sstar_{K_+}(o)$.
\end{proof}

%%%%%%%%%%%%%%%%%%%%%%%%%%%%%%%%%%%%%%%%%%
%%%%%%%%%%%%%%%%%%%%%%%%%%%%%%%%%%%%%%%%%%
%%%%%%%%%%%%%%%%%%%%%%%%%%%%%%%%%%%%%%%%%%
\section{{\bf Stanley-Reisner Rings and Connected Sum}}
We study the algebraic structure of the Stanley-Reisner ring of the connected sum $K_1~\#^Z K_2$ defined in the previous section. The algebraic model is
the \emph{connected sum of rings} introduced and studied by Ananthnarayan-Avramov-Moore \cite{AAM}. In Section \ref{4-1}, we review the definitions and show
that the Stanley-Reisner ring $\ZZ[K_1~\#^Z K_2]$ is the connected sum of the Stanley-Reisner ring of $K_1$ and $K_2$. In Section \ref{canonical}, we study
the Gorensteinness of $\ZZ[K_1~\#^Z K_2]$ in terms of the ones of of $K_1$, $K_2$ and $K_1\cap K_2$ for strong connected sums. Here Corollary \ref{just} is
our motivation to define \emph{strong} connected sums. In Section \ref{4-3}, we discuss how those properties descend to Torsion algebras of Stanley-Reisner
rings.
%%%%%%%%%%%%%%%%%%%%%%%%%%%%%%%%%%%%%%%%%%
\subsection{Connected Sum of Rings}\label{4-1}
%%%%%%%%%%%%%%%%%%%%%%%%%%%%%%%%%%%%%%%%%%
\begin{defn}[Fiber Product and Connected Sum of Rings]\label{defconn}
Let $\epsilon_{\sfA}: \sfA \to \sfC$ and $\epsilon_{\sfB}:\sfB \to \sfC$ be ring homomorphisms. Then the \emph{fiber product} $\sfA\times_{\sfC}\sfB$ is the
subring of $\sfA \oplus \sfB$ defined by $\sfA\times_{\sfC}\sfB := \{(x,y) \in \sfA \oplus \sfB \ |\ \epsilon_{\sfA} (x) = \epsilon_{\sfB}(y)\}$.
%The canonical projections $\pi_{\sfA}$ and $\pi_{\sfB}$ of $\sfA\times_{\sfC}\sfB$ to $\sfA$ and $\sfB$ satisfies the usual universal property:
%\[
%\xymatrix{
%&& \sfA \ar[rd]^{\epsilon_{\sfA}} &  \\
%\forall\sfD\ar@/^1pc/[rru]^{\forall\alpha}\ar@/_1pc/[rrd]_{\forall\beta} \ar@{.>}[r]_{\exists!}&\sfA\times_{\sfC}\sfB\ar[ru]^{\pi_{\sfA}}\ar[rd]_{\pi_{\sfB}}&& \sfC\\
%&&\sfB \ar[ru]_{\epsilon_{\sfB}}  &
%}
%\]
Now take a $\sfC$-module $\sfV$ and regard it as a $\sfA$-module and a $\sfB$-module through $\epsilon_{\sfA}$ and $\epsilon_{\sfB}$. Consider the commutative diagram
\begin{equation} \label{connsumdiagram}
\xymatrix{
\sfV\ar[d]_{\iota_{\sfB}}\ar[r]^{\iota_{\sfA}}& \sfA \ar[d]^{\epsilon_{\sfA}}  \\
\sfB \ar[r]_{\epsilon_{\sfB}}  & \sfC
}
\end{equation}
where $\iota_{\sfA}$ and $\iota_{\sfB}$ are homomorphisms of $\sfA$-modules and $\sfB$-modules. The {\bf\emph{connected sum}} of the diagram \eqref{connsumdiagram} is given by
\[
\sfA ~\#_{\sfC}^{\sfV} \sfB := \frac{\sfA \times_{\sfC}\sfB }{ \{(\iota_{\sfA}(v), \iota_{\sfB}(v)) \in \sfA \oplus \sfB \ |\ v \in \sfV\}}.
\]
\end{defn}
%%%%%%%%%%%%%%%%%%%%%%%%%%%%%%%%%%%%%%%%%%
\begin{rem}
One may also view the definition of the connected sum of rings as arising via the following exact sequences:
\begin{eqnarray}
&&0 \longrightarrow \sfA\times_{\sfC}\sfB \longrightarrow \sfA\oplus \sfB \stackrel{(\epsilon_{\sfA},- \epsilon_{\sfB})}{\longrightarrow} \sfC\\
&& \ \ \ \ \ \ \ \ \ \ \ \  \sfV \longrightarrow \sfA\times_{\sfC}\sfB \longrightarrow \sfA ~\#_{\sfC}^{\sfV} \sfB \longrightarrow 0
\end{eqnarray}
\end{rem}
%%%%%%%%%%%%%%%%%%%%%%%%%%%%%%%%%%%%%%%%%%
\begin{thm}\label{thm1}
Let $\tilde{K}:=K_1\cup K_2$ and $W:=K_1\cap K_2$ where $K_1$ and $K_2$ are simplicial complexes on $[m]$. There is a natural isomorphism $\theta:
\ZZ[\tilde{K}] \to \ZZ[K_1]\times_{\ZZ[W]} \ZZ[K_2]$ defined by $\theta(r)=(\sff_1(r), \sff_2(r))$ where $\sff_1: \ZZ[\tilde{K}] \to \ZZ[K_1]$ and $\sff_2:
\ZZ[\tilde{K}] \to \ZZ[K_2]$ are the obvious quotient maps.
\end{thm}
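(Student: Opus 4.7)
The plan is to translate the fiber product on the right-hand side into an ideal-theoretic statement about $\ZZ[x_1,\ldots,x_m]$, and then observe that it matches precisely the defining ideal of $\ZZ[\tilde K]$. All four rings are quotients of the polynomial ring $R := \ZZ[x_1,\ldots,x_m]$ by monomial ideals $I_{\tilde K}, I_{K_1}, I_{K_2}, I_W$ generated by the non-faces of $\tilde K, K_1, K_2, W$ respectively, and $\theta$ is induced by the natural projections $\sff_i$. So the theorem is a special case of the following general principle: for any commutative ring $R$ with ideals $I_1, I_2$, the diagonal map $R \to R/I_1 \oplus R/I_2$ factors through $R/I_1 \times_{R/(I_1+I_2)} R/I_2$, with kernel $I_1 \cap I_2$, and the induced map $R/(I_1\cap I_2) \to R/I_1 \times_{R/(I_1+I_2)} R/I_2$ is an isomorphism.

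The first step is to record the two ideal identities
\[
I_{\tilde K} = I_{K_1} \cap I_{K_2}, \qquad I_W = I_{K_1} + I_{K_2}.
\]
The first is immediate from $\sigma \notin K_1 \cup K_2 \Leftrightarrow \sigma \notin K_1 \text{ and } \sigma \notin K_2$, together with the fact that a monomial lies in a monomial ideal iff one of its generating monomials divides it; since the $I_{K_i}$ are squarefree monomial ideals generated by the $x_\sigma$ with $\sigma \notin K_i$, the intersection is again squarefree-monomial and is spanned by the common non-faces. The second identity follows from $\sigma \notin K_1 \cap K_2 \Leftrightarrow \sigma \notin K_1 \text{ or } \sigma \notin K_2$, so the union of the generating sets of $I_{K_1}$ and $I_{K_2}$ generates $I_W$.

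The second step is the injectivity and well-definedness of $\theta$. Define $\tilde\theta: R \to \ZZ[K_1]\oplus \ZZ[K_2]$ by $\tilde\theta(r) = (r + I_{K_1}, r + I_{K_2})$. Its image lies in the fiber product because both components reduce to the same class $r + I_W$ in $\ZZ[W]$, and its kernel is $I_{K_1}\cap I_{K_2} = I_{\tilde K}$ by step one. Hence $\tilde\theta$ descends to an injection $\theta: \ZZ[\tilde K] \hookrightarrow \ZZ[K_1]\times_{\ZZ[W]}\ZZ[K_2]$ with the desired formula $\theta(r) = (\sff_1(r), \sff_2(r))$.

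The final step — and the only slightly non-formal one — is surjectivity. Given $(a_1, a_2) \in \ZZ[K_1]\times_{\ZZ[W]}\ZZ[K_2]$, lift to $\tilde a_i \in R$. The fiber product condition says $\tilde a_1 - \tilde a_2 \in I_W = I_{K_1} + I_{K_2}$, so we may write $\tilde a_1 - \tilde a_2 = b_1 + b_2$ with $b_i \in I_{K_i}$. Setting $r := \tilde a_1 - b_1 = \tilde a_2 + b_2$ gives $\sff_1(r) = a_1$ and $\sff_2(r) = a_2$, so $\theta(r + I_{\tilde K}) = (a_1,a_2)$. Naturality in the diagram of quotient maps $\sff_i, \sfg_i$ is automatic since everything is induced from the identity on $R$. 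I do not expect any real obstacle — the proof is essentially the standard fiber-product-of-quotients lemma — the only mild care needed is in step one to verify that intersection and sum of squarefree monomial ideals behave as claimed on the nose (as opposed to merely up to radical).
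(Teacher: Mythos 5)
Your proof is correct, but it takes a genuinely different route from the paper. The paper argues topologically: it identifies $\ZZ[L]$ with the $\sfT$-equivariant cohomology $H^*_{\sfT}(\calZ_{L,[m]};\ZZ)$ via the Davis--Januszkiewicz theorem (Theorem~\ref{DJ}), notes $\calZ_{\tilde K} = \calZ_{K_1}\cup\calZ_{K_2}$ and $\calZ_W = \calZ_{K_1}\cap\calZ_{K_2}$, and then invokes the equivariant Mayer--Vietoris sequence; since equivariant cohomology of moment angle complexes is concentrated in even degrees, the long exact sequence breaks into the short exact sequence $0\to\ZZ[\tilde K]\to\ZZ[K_1]\oplus\ZZ[K_2]\to\ZZ[W]\to 0$, whose kernel term is exactly the fiber product. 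Your argument bypasses all topology and works directly with the Stanley--Reisner ideals in $R=\ZZ[x_1,\dots,x_m]$: you establish $I_{\tilde K}=I_{K_1}\cap I_{K_2}$ and $I_W=I_{K_1}+I_{K_2}$ and then apply the standard fiber-product-of-quotients lemma. Your worry about ``on the nose versus up to radical'' is legitimate but resolves cleanly: a monomial $x^\alpha$ lies in a Stanley--Reisner ideal $I_K$ if and only if $\supp(\alpha)\notin K$, and from this the two ideal identities follow exactly, not merely up to radical. Your approach is more elementary and self-contained (it requires neither Theorem~\ref{DJ} nor the vanishing of odd-degree equivariant cohomology), which is a real advantage if one wants a purely combinatorial account; the paper's approach, by contrast, foregrounds the topological content, which is the point of view the rest of the paper is built around, and it generalizes verbatim to the Tor-algebra statements in Section~4.3 where the algebraic side would require a genuine spectral-sequence or Koszul argument.
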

\begin{proof}
Observe $\calZ_{\tilde{K}} = \calZ_{K_1} \cup \calZ_{K_2}$ and $\calZ_{W} = \calZ_{K_1} \cap \calZ_{K_2}$. Then we can apply the Mayer-Vietoris Sequence for
$\sfT$-equivariant cohomology. Since there are no odd degree classes, the sequence splits into short exact sequences. By Theorem \ref{DJ}, we have
\[
0 \to \ZZ[\tilde{K}] \stackrel{(\sff_1,\sff_2)}{\longrightarrow}  \ZZ[K_1]\oplus \ZZ[K_2] \stackrel{(\sfg_1,-\sfg_2)}{\longrightarrow} \ZZ[W] \to 0
\]
where $\sfg_1$ and $\sfg_2$ are the obvious quotient maps. The kernel $(\sfg_1,\sfg_2)$ is the fiber product and so $\theta$ gives the isomorphism.
\end{proof}
%%%%%%%%%%%%%%%%%%%%%%%%%%%%%%%%%%%%%%%%%%
\begin{thm}\label{thm2}
Let $K_1~\#^Z K_2$ be a connected sum. Then there is a natural isomorphism $\xi: \ZZ[K_1]~\#_{\ZZ[W]}^{\calJ_Z} \ZZ[K_2] \to \ZZ[K_1~\#^Z K_2]$
where $\calJ_Z$ is the ideal in $\ZZ[W]$ generated by $x_{\sigma}, \sigma \in Z$.
\end{thm}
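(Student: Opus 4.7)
The plan is to reduce the claim to Theorem \ref{thm1} by realising $\ZZ[K_1 \#^Z K_2]$ as a quotient of $\ZZ[K_1 \cup K_2]$ and checking that under $\theta$ the two ideals being quotiented out correspond.

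Set $\tilde K := K_1 \cup K_2$ and let $J_Z \subset \ZZ[\tilde K]$ denote the ideal generated by $\{x_\tau : \tau \in Z\}$. By Definition \ref{defn:conn sum}, $K_1 \#^Z K_2 = \Delt_Z(\tilde K) = \tilde K \setminus O_{\tilde K}(Z)$, and $O_{\tilde K}(Z) \subset W := K_1 \cap K_2$ by hypothesis. Since every $\sigma \in O_{\tilde K}(Z)$ contains some $\tau \in Z$, a monomial $x^\bfa \in \ZZ[\tilde K]$ lies in $J_Z$ iff $\supp(\bfa) \in O_{\tilde K}(Z)$; quotienting out $J_Z$ therefore kills exactly the monomials corresponding to $O_{\tilde K}(Z)$, yielding a natural identification $\ZZ[\tilde K]/J_Z \cong \ZZ[K_1 \#^Z K_2]$.

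Next I would make explicit the lifts $\iota_i : \calJ_Z \to \ZZ[K_i]$ implicit in the formation of the connected sum of rings. A monomial $x^\bfa \in \ZZ[W]$ lies in $\calJ_Z$ iff $\supp(\bfa) \in O_W(Z)$, and since $O_W(Z) = O_{\tilde K}(Z) \subset W \subset K_i$ each such support is a face of $K_i$; I set $\iota_i(x^\bfa) := x^\bfa \in \ZZ[K_i]$ and extend $\ZZ$-linearly. To verify that $\iota_i$ is a $\ZZ[K_i]$-module map (with $\ZZ[K_i]$ acting on $\calJ_Z$ through $\sfg_i$), the key observation is: for $\mu \in K_i$ and $\nu \in O_{\tilde K}(Z)$, the product $x_\mu x_\nu$ is nonzero in $\ZZ[K_i]$ iff $\mu \cup \nu \in W$. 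The nontrivial direction uses $\mu \cup \nu \supset \nu \supset \tau \in Z$ together with $O_{\tilde K}(Z) \subset W$, which forces $\mu \cup \nu \in W$; the converse is immediate.

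Finally, Theorem \ref{thm1} gives the isomorphism $\theta : \ZZ[\tilde K] \xrightarrow{\sim} \ZZ[K_1] \times_{\ZZ[W]} \ZZ[K_2]$ which sends each generator $x_\tau$ of $J_Z$ (with $\tau \in Z \subset W$) to $(x_\tau, x_\tau) = (\iota_1(x_\tau), \iota_2(x_\tau))$. A short check using the fibre-product condition $\sfg_1(r) = \sfg_2(s)$ shows that $\{(\iota_1(v), \iota_2(v)) : v \in \calJ_Z\}$ is already an ideal of $\ZZ[K_1] \times_{\ZZ[W]} \ZZ[K_2]$, and $\theta(J_Z)$ coincides with this ideal. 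Passing to quotients on both sides yields the desired natural isomorphism
\[
\xi : \ZZ[K_1] \#_{\ZZ[W]}^{\calJ_Z} \ZZ[K_2] \xrightarrow{\sim} \ZZ[K_1 \#^Z K_2].
\]
The only substantive step is the well-definedness of $\iota_i$ as a $\ZZ[K_i]$-module map, which pinpoints the role of the hypothesis $O_{\tilde K}(Z) \subset K_1 \cap K_2$ from Definition \ref{defn:conn sum}: without it, a relation in $\calJ_Z \subset \ZZ[W]$ need not lift to $\ZZ[K_i]$ and $\iota_i$ would fail to exist.
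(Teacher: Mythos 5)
Your proof is correct and reaches the same conclusion as the paper's, but by a genuinely different route. The paper obtains the key exact sequence
\[
0 \longrightarrow \calI_Z \longrightarrow \ZZ[\tilde K] \longrightarrow \ZZ[K_1 \#^Z K_2] \longrightarrow 0
\]
topologically, by invoking the relative equivariant cohomology long exact sequence for the pair $(\calZ_{\tilde K}, \calZ_{K_1 \#^Z K_2})$ together with Theorem \ref{DJ}, and observing that it splits into short exact sequences because the rings are concentrated in even degree. You instead verify this sequence directly from the combinatorics of Stanley--Reisner ideals: a monomial of $\ZZ[\tilde K]$ lies in the ideal generated by $\{x_\tau : \tau \in Z\}$ precisely when its support lies in $O_{\tilde K}(Z)$, and these are exactly the faces deleted in passing from $\tilde K$ to $\Delt_Z(\tilde K)$. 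Both arguments are valid; yours has the advantage of being purely algebraic and self-contained, and it makes explicit the $\ZZ[K_i]$-module structure maps $\iota_i : \calJ_Z \to \ZZ[K_i]$ which Definition \ref{defconn} requires as part of the data for the connected sum $\ZZ[K_1]~\#^{\calJ_Z}_{\ZZ[W]}\ZZ[K_2]$. The paper leaves these implicit, simply identifying the quotienting submodule with $\theta\circ\sfj^{-1}(\calJ_Z)$ without pausing to check that $\sff_i\circ\sfj^{-1}$ really are $\ZZ[K_i]$-linear. Your observation that the hypothesis $O_{\tilde K}(Z)\subset K_1\cap K_2$ from Definition \ref{defn:conn sum} is exactly what makes these lifts well defined is a useful point the paper does not spell out.
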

\begin{proof}
Let $K:= K_1~\#^Z K_2 = \Delt_Z(\tilde{K})$. The relative cohomology sequence for the pair $(\calZ_{\tilde{K}}, \calZ_{K})$ splits into short exact
sequence. By Theorem \ref{DJ} and Theorem \ref{thm1}, we obtain
\[
0 \to \calI_Z \stackrel{\theta|_{\calI_Z}}{\longrightarrow} \ZZ[K_1]\times_{\ZZ[W]} \ZZ[K_2] \stackrel{\sfh\circ\theta^{-1}}{\longrightarrow} \ZZ[K] \to 0
\]
where $\sfh: \ZZ[\tilde{K}] \to \ZZ[K]$ is the obvious quotient map and $\calI_Z$ is the ideal in $\ZZ[\tilde{K}]$ generated by $x_{\sigma},\sigma\in Z$. Since
$O_{\tilde{K}}(Z) \subset W$, $\sfj: \calI_Z \to \calJ_Z, x_{\sigma} \mapsto x_{\sigma}$ is an isomorphism of $\ZZ[x_1,\cdots, x_m]$-modules.  Since the
connected sum $\ZZ[K_1]~\#_{\ZZ[W]}^{\calJ_Z} \ZZ[K_2]$ is defined to be $\ZZ[K_1]\times_{\ZZ[W]} \ZZ[K_2]/ \theta\circ \sfj^{-1}(\calJ_Z)$, the map $\xi$ is
the isomorphism induced from $\sfh\circ\theta^{-1}$.
\end{proof}
%%%%%%%%%%%%%%%%%%%%%%%%%%%%%%%%%%%%%%%%%%
\subsection{Connected sum of Gorenstein rings}\label{canonical}
%%%%%%%%%%%%%%%%%%%%%%%%%%%%%%%%%%%%%%%%%%
%%%%%%%%%%%%%%%%%%%%%%%%%%%%%%%%%%%%%%%%%%
Let $W$ be a subcomplex of a simplicial complex $K$ on $[m]$. Let $\calI_{K\backslash W}$ be a kernel of the quotient map $\ZZ[K] \to \ZZ[W]$. 
\begin{lem}\label{la}
The annihilator $(0:_{\ZZ[K]} \calI_{K\backslash W})$ is generated by $x_{\sigma}, \sigma \in W \backslash (\overline{K\backslash W})$.
\end{lem}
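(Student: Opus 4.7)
The plan is to work directly with the standard monomial $\mathbb{Z}$-basis of $\mathbb{Z}[K]$ and reduce the annihilator computation to a face-theoretic condition that matches the description of $Z := W \backslash (\overline{K \backslash W})$ given in Lemma~\ref{basiclemma}.

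First, I would record the basic structural facts I intend to use: $\mathbb{Z}[K]$ is a free $\mathbb{Z}$-module with basis $\{x^{\alpha} : \supp(\alpha) \in K\}$, the multiplication satisfies $x^{\alpha}\cdot x^{\beta} = x^{\alpha+\beta}$ when $\supp(\alpha)\cup\supp(\beta)\in K$ and is zero otherwise, and the ideal $\calI_{K\backslash W}$ is generated by $\{x_{\tau} : \tau \in K\backslash W\}$. Consequently, an element $f \in \mathbb{Z}[K]$ lies in $(0 :_{\mathbb{Z}[K]} \calI_{K\backslash W})$ if and only if $f\cdot x_{\tau} = 0$ for every $\tau \in K\backslash W$.

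Next, for the containment $\supseteq$, I would take $\sigma \in Z$ and any $\tau \in K\backslash W$. By Lemma~\ref{basiclemma}, the defining property of $Z$ yields $\sigma \cup \tau \notin K$, hence $x_{\sigma}x_{\tau} = x_{\sigma\cup\tau} = 0$ in $\mathbb{Z}[K]$, which shows $x_{\sigma} \in (0:_{\mathbb{Z}[K]}\calI_{K\backslash W})$.

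For the reverse containment, I would expand $f = \sum_{\alpha} c_{\alpha} x^{\alpha}$ in the monomial basis and fix $\tau \in K\backslash W$. Since the map $\alpha \mapsto \alpha + \mathbf{1}_{\tau}$ is injective on exponent vectors, the nonzero terms in $f\cdot x_{\tau}$ remain $\mathbb{Z}$-linearly independent in $\mathbb{Z}[K]$; therefore $f\cdot x_{\tau}=0$ forces $x^{\alpha}x_{\tau} = 0$ for every $\alpha$ with $c_{\alpha}\ne 0$, equivalently $\supp(\alpha)\cup\tau\notin K$. Letting $\tau$ range over $K\backslash W$ and invoking Lemma~\ref{basiclemma}, this says $\sigma_{\alpha}:=\supp(\alpha) \in Z$. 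Since $x^{\alpha}$ is divisible in $\mathbb{Z}[K]$ by $x_{\sigma_{\alpha}}$, each monomial in $f$ lies in the ideal $(x_{\sigma} : \sigma \in Z)$, and so does $f$.

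There is no serious obstacle here; the only subtlety is the bookkeeping that distinguishes the abstract face $\sigma$ from the squarefree monomial $x_{\sigma}$ and from general monomials $x^{\alpha}$ with the same support. The key conceptual ingredient is not the argument itself but the characterization $Z = \{\tau \in K : \tau \cup \sigma \notin K \text{ for all } \sigma \in K\backslash W\}$ from Lemma~\ref{basiclemma}, which is precisely the face-theoretic translation of ``annihilates every generator of $\calI_{K\backslash W}$.''
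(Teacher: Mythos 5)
Your proof is correct and takes essentially the same route as the paper: the paper simply asserts that the annihilator is generated by the $x_{\sigma}$ with $\sigma \cup \tau \notin K$ for all $\tau \in K\backslash W$ and then cites Lemma~\ref{basiclemma}, whereas you supply the (correct) monomial-basis argument justifying that assertion before making the same citation.
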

%%%%%%%%%%%%%%%%%%%%%%%%%%%%%%%%%%%%%%%%%%
\begin{proof}
The annihilator is generated by $x_{\sigma}$ where $\sigma \in K$ s.t. $\sigma \cup \tau \not\in K, \forall \tau \in K \backslash W$. The claim is a corollary of Lemma \ref{basiclemma}.
\end{proof}
%%%%%%%%%%%%%%%%%%%%%%%%%%%%%%%%%%%%%%%%%%
The following is a basic fact about the canonical module of a Cohen-Macaulay ring \cite[Theorem 3.3.7]{BHComm}:
\begin{lem}\label{GorCano}
Suppose that $W$ and $K$ are pure with the same dimension. If $K$ is Gorenstein and $W$ is Cohen-Macaulay, then $(0:_{\ZZ[K]} \calI)$ is a canonical module of $\ZZ[W]$.
\end{lem}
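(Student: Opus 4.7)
The plan is to invoke the cited fact \cite[Theorem 3.3.7]{BHComm} directly. That theorem says that for a surjection $R \twoheadrightarrow S = R/\calI$ between Cohen–Macaulay rings, if $R$ admits a canonical module $\omega_R$, then $\omega_S \cong \Ext^c_R(S,\omega_R)$, where $c = \dim R - \dim S$. I plan to apply this with $R = \ZZ[K]$, $S = \ZZ[W]$, and $\calI = \calI_{K\backslash W}$ (thought of as graded or localized appropriately, so that the cited result applies).

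First I would use the purity hypothesis to conclude that the Krull dimensions agree. Since $\dim \ZZ[L] = \dim L + 1$ for any pure simplicial complex $L$, and $W$ and $K$ are pure of the same dimension, we get $c = \dim \ZZ[K] - \dim \ZZ[W] = 0$. Hence the Ext collapses to a Hom:
\[
\Ext^0_{\ZZ[K]}(\ZZ[W],\omega_{\ZZ[K]}) = \Hom_{\ZZ[K]}(\ZZ[K]/\calI_{K\backslash W},\,\omega_{\ZZ[K]}) = (0 :_{\omega_{\ZZ[K]}} \calI_{K\backslash W}).
\]
Next, since $\ZZ[K]$ is Gorenstein, its canonical module is $\omega_{\ZZ[K]} \cong \ZZ[K]$ (up to a degree shift, which does not affect the isomorphism class of the canonical module). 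Substituting this in gives $\omega_{\ZZ[W]} \cong (0 :_{\ZZ[K]} \calI_{K\backslash W})$, which is exactly the stated conclusion.

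The main obstacle is verifying that \cite[Theorem 3.3.7]{BHComm}, classically stated in the local (or graded-over-a-field) setting, applies verbatim here where the ground ring is $\ZZ$. In practice one either localizes at a maximal graded ideal or works in the $*$-local setting, and notes that Cohen–Macaulayness and Gorensteinness of Stanley–Reisner rings are unaffected by such standard passages; the existence of the canonical module with the claimed Ext-description also goes through in the graded setting. Apart from this bookkeeping, the proof reduces to Lemma~\ref{la} (to identify $(0 :_{\ZZ[K]} \calI_{K\backslash W})$ explicitly in terms of the combinatorics) together with the $c=0$ specialization outlined above.
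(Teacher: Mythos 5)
Your proposal is correct and is essentially the same as the paper's, which offers no argument beyond the citation of \cite[Theorem 3.3.7]{BHComm}; you are simply unpacking that citation (equal dimension gives $c=0$, so $\Ext^0=\Hom=(0:_{\omega}\calI_{K\backslash W})$, and Gorensteinness gives $\omega_{\ZZ[K]}\cong\ZZ[K]$), and your caveat about the $\ZZ$ base matches the level of care the paper itself takes.
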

%%%%%%%%%%%%%%%%%%%%%%%%%%%%%%%%%%%%%%%%%%
From \cite{AAM}, we have the following theorem.
\begin{thm}
In the definition \ref{defconn}, $\sfA ~\#_{\sfC}^{\sfV} \sfB$ is Gorenstein if $\sfA$ and $\sfB$ are Gorenstein, $\sfC$ is Cohen-Macaulay and $\sfV$ is a canonical module of $\sfC$.
\end{thm}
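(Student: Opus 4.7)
The plan is to identify $\sfA \#_\sfC^\sfV \sfB$ with the quotient of the Cohen-Macaulay ring $\sfA \times_\sfC \sfB$ by its canonical module (realized as an ideal of positive grade) and then to invoke the standard linkage result that such a quotient is Gorenstein. The analysis hinges on the two short exact sequences
\begin{equation*}
0 \longrightarrow \sfA \times_\sfC \sfB \longrightarrow \sfA \oplus \sfB \xrightarrow{(\epsilon_\sfA,\, -\epsilon_\sfB)} \sfC \longrightarrow 0,
\end{equation*}
\begin{equation*}
0 \longrightarrow \sfV \xrightarrow{(\iota_\sfA,\, \iota_\sfB)} \sfA \times_\sfC \sfB \longrightarrow \sfA \#_\sfC^\sfV \sfB \longrightarrow 0
\end{equation*}
attached to the construction. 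I would work in the local or graded Noetherian setting so that canonical modules are well defined, and assume $\epsilon_\sfA$ (and hence $\epsilon_\sfB$, as in the applications of this paper) is surjective, so that the first sequence really is short exact.

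First, I would check that $\sfA \times_\sfC \sfB$ is Cohen-Macaulay of the same dimension as $\sfA$ and $\sfB$. In the natural setting $\dim \sfC = \dim \sfA - 1 = \dim \sfB - 1$, the depth lemma applied to the first exact sequence, combined with Cohen-Macaulayness of $\sfA$, $\sfB$, and $\sfC$, forces $\operatorname{depth}(\sfA \times_\sfC \sfB) = \dim(\sfA \times_\sfC \sfB)$. Second, I would identify $\sfV$ with $\omega_{\sfA \times_\sfC \sfB}$ via $(\iota_\sfA, \iota_\sfB)$. Dualizing the first sequence with respect to a dualizing complex and using $\omega_\sfA \cong \sfA$, $\omega_\sfB \cong \sfB$ (by Gorensteinness) together with $\omega_\sfC \cong \sfV$ (by hypothesis) produces a long exact sequence whose relevant piece identifies $\omega_{\sfA \times_\sfC \sfB}$ with $\sfV$, embedded precisely as in the second short exact sequence above.

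With this identification in hand, $\sfA \#_\sfC^\sfV \sfB$ is the quotient of a Cohen-Macaulay ring by an ideal isomorphic to its canonical module, and the linkage criterion (see \cite[Theorem 3.3.7]{BHComm} and its consequences) then produces Gorensteinness. The main obstacle is the second step: the naive dualization yields $\sfV$ only up to an extension, and one must carefully track the connecting homomorphism coming from the canonical module structure of $\sfC$ to confirm that the composite $\sfV \hookrightarrow \sfA \times_\sfC \sfB$ prescribed in Definition \ref{defconn} actually coincides with the canonical inclusion of $\omega_{\sfA \times_\sfC \sfB}$. This diagrammatic matching is the technical heart of \cite{AAM}, and the proof ultimately reduces to invoking their main theorem once the setup of the connected sum is translated into their language.
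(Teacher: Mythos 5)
The paper does not actually prove this theorem; it is stated as a direct citation to \cite{AAM}, and your proposal also bottoms out in ``invoking their main theorem.'' So in spirit you and the paper both defer to the same reference. However, the preparatory steps you sketch contain a genuine gap that would block the argument in exactly the setting this paper cares about.

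Your sketch hinges on the dimension hypothesis $\dim\sfC=\dim\sfA-1=\dim\sfB-1$ and on realizing $\sfV$ as an ideal of \emph{positive grade} in $\sfA\times_\sfC\sfB$, so that the linkage criterion (quotient of a Cohen--Macaulay ring by a canonical ideal of positive grade is Gorenstein of dimension one less) can be invoked. But in the strong connected sum setup of this paper --- and in the hypotheses of Corollary~\ref{just}, which is the whole point of stating this theorem --- $K_1$, $K_2$ and $W=K_1\cap K_2$ are required to be pure of the \emph{same} dimension, so $\ZZ[K_1]$, $\ZZ[K_2]$, $\ZZ[W]$, and $\ZZ[K_1\cup K_2]=\sfA\times_\sfC\sfB$ all have the same Krull dimension, and the resulting quotient $\ZZ[K_1\,\#^Z K_2]$ again has that same dimension. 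Thus $\sfV\cong\calI_Z$ sits inside the fiber product as an ideal of grade zero, not positive grade, and the linkage criterion you cite does not apply. Concretely: applying the depth lemma to $0\to\sfV\to\sfA\times_\sfC\sfB\to\sfA\,\#_\sfC^\sfV\sfB\to 0$, with $\operatorname{depth}\sfV=\operatorname{depth}(\sfA\times_\sfC\sfB)=\dim$, only gives $\operatorname{depth}(\sfA\,\#_\sfC^\sfV\sfB)\geq\dim-1$, one short of what Cohen--Macaulayness (let alone Gorensteinness) of the connected sum requires. Deciding whether the depth actually jumps up to $\dim$ requires analyzing the connecting map $\Ext^{d+1}(k,\sfV)\to\Ext^{d+1}(k,\sfA\times_\sfC\sfB)$, and this finer analysis --- not the generic depth lemma --- is precisely what the cited argument in \cite{AAM} supplies. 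So your outline is not wrong in intent, but the two numerical claims ($\dim\sfC=\dim\sfA-1$; $\sfV$ of positive grade) contradict the intended application, and without them the elementary linkage route you describe stalls at a depth bound that is one too small.
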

%\vspace{7pt}
%\fbox{\parbox[r]{5.9in}{NEED PROOF HERE
%}}
%\vspace{7pt}
As a corollary, together with Lemma \ref{la} and \ref{GorCano}, we have 
%%%%%%%%%%%%%%%%%%%%%%%%%%%%%%%%%%%%%%%%%%
\begin{cor}\label{just}
Let $K_1$ and $K_2$ are simplicial complexes on $[m]$ such that $K_1$, $K_2$ and $W:=K_1 \cup K_2$ are pure with the same dimension. Assume that $K_1, K_2$ are
Gorenstein and $W$ is Cohen-Macaulay. If $K_1 ~\#^ZK_2$ is a strong connected sum, then $\ZZ[K_1 ~\#^Z K_2]$ is Gorenstein.
\end{cor}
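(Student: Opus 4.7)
The plan is to apply Theorem \ref{thm2} to rewrite $\ZZ[K_1 \#^Z K_2]$ as a connected sum of rings, and then invoke the Ananthnarayan-Avramov-Moore criterion (the theorem quoted just above the corollary). By Theorem \ref{thm2} we have an isomorphism $\ZZ[K_1 \#^Z K_2] \cong \ZZ[K_1] \#_{\ZZ[W]}^{\calJ_Z} \ZZ[K_2]$, so it suffices to verify the hypotheses of the AAM theorem with $\sfA = \ZZ[K_1]$, $\sfB = \ZZ[K_2]$, $\sfC = \ZZ[W]$, and $\sfV = \calJ_Z$. Gorensteinness of $\sfA, \sfB$ and Cohen-Macaulayness of $\sfC$ are given by hypothesis, so the only nontrivial step is to show that $\calJ_Z$ is a canonical module of $\ZZ[W]$.

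To verify this, I will identify $\calJ_Z$ with the annihilator ideal $(0 :_{\ZZ[K_i]} \calI_{K_i \backslash W})$ for $i = 1$ (or equivalently $i = 2$). By Lemma \ref{la}, this annihilator is generated as an ideal of $\ZZ[K_i]$ by the monomials $x_\sigma$ with $\sigma \in W \backslash (\overline{K_i \backslash W})$. The strong connected sum hypothesis of Definition \ref{sconnected} says precisely that $Z = W \backslash (\overline{K_i \backslash W})$ for both $i=1,2$, so the generating set is $\{x_\sigma : \sigma \in Z\}$. The annihilator is killed by $\calI_{K_i \backslash W}$, hence naturally a module over $\ZZ[K_i]/\calI_{K_i \backslash W} = \ZZ[W]$, and I claim it is isomorphic as a $\ZZ[W]$-module to the ideal $\calJ_Z \subset \ZZ[W]$ via the obvious monomial correspondence.

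Having established the identification, Lemma \ref{GorCano} applies: since $K_i$ is Gorenstein and $W$ is Cohen-Macaulay of the same pure dimension, $(0 :_{\ZZ[K_i]} \calI_{K_i \backslash W})$ is a canonical module of $\ZZ[W]$. Hence $\calJ_Z$ is a canonical module of $\ZZ[W]$, and the AAM theorem yields that $\ZZ[K_1] \#_{\ZZ[W]}^{\calJ_Z} \ZZ[K_2]$ is Gorenstein, as required.

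The one point that requires genuine care, rather than any difficult computation, is the module identification in step two. One must check that for $\sigma \in Z$ and $\tau \in W$ with $\sigma \cup \tau \in K_i$, in fact $\sigma \cup \tau \in W$, so that multiplication of $x_\sigma$ by monomials of $\ZZ[W]$ is the same whether performed in $\ZZ[K_i]$ or in $\ZZ[W]$. But the defining property of $\sigma \in Z = W \backslash (\overline{K_i \backslash W})$ (see the proof of Lemma \ref{basiclemma}) is that $\sigma \cup \rho \notin K_i$ for every $\rho \in K_i \backslash W$; so if $\sigma \cup \tau$ were in $K_i \backslash W$ we would apply this with $\rho = \sigma \cup \tau$ and get a contradiction. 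This is exactly the algebraic content that motivates the strong connected sum definition.
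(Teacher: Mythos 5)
Your proof is correct and follows exactly the route the paper intends: Theorem \ref{thm2} reduces the claim to the AAM criterion, and Lemmas \ref{la} and \ref{GorCano} together with the strong connected sum condition identify $\calJ_Z$ with the annihilator $(0:_{\ZZ[K_i]}\calI_{K_i\backslash W})$ and hence with a canonical module of $\ZZ[W]$. The paper states the corollary without writing out the argument (``As a corollary, together with Lemma \ref{la} and \ref{GorCano}''), so your last paragraph --- verifying that the $\ZZ[W]$-module structure on the annihilator agrees with that on $\calJ_Z$ --- supplies a detail the paper leaves implicit, and it is the right check to make.
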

The above corollary is the algebraic motivation to have Definition \ref{defconn} of the strong connected sum.
%%%%%%%%%%%%%%%%%%%%%%%%%%%%%%%%%%%%%%%%%%
%%%%%%%%%%%%%%%%%%%%%%%%%%%%%%%%%%%%%%%%%%
%%%%%%%%%%%%%%%%%%%%%%%%%%%%%%%%%%%%%%%%%%
\subsection{Tor algebra of connected sums}\label{4-3}
%%%%%%%%%%%%%%%%%%%%%%%%%%%%%%%%%%%%%%%%%%
Let $K_1~\#_W^Z K_2$ be a connected sum and let $\tilde{K}=K_1\cup K_2$ and $K=K_1~\#_W^ZK_2$. Let $[m]=\{1,\cdots, m\}$ be the vertex set of
$\tilde{K}$. Theorem \ref{thm1} and Theorem \ref{thm2} imply that there are two short exact sequences of algebras and modules over $\ZZ[x_1,\cdots, x_n]$:
\begin{equation}
0 \to \ZZ[\tilde{K}] \to \ZZ[K_1]\oplus \ZZ[K_2] \to \ZZ[W] \to 0
\end{equation}
\begin{equation}
0 \to \calI_Z \to \ZZ[\tilde{K}] \to \ZZ[K] \to 0
\end{equation}
%%%%%%%%%%%%%%%%%%%%%%%%%%%%%%%%%%%%%%%%%%
Consider an integer $n\times m$ matrix $B$ of rank $n$. The choice of such $B$ bijectively corresponds to a choice of a surjective map $\sfT:=U(1)^m \to
\sfR:=U(1)^n$. Denote $\ZZ[\sfT^*]:=\ZZ[x_1,\cdots, x_m]$. Let $u_i:=\sum_{j=1}^m B_{ij} x_j$ and denote $\ZZ[\sfR^*]:=\ZZ[u_1,\cdots, u_n] \subset
\ZZ[\sfT^*]$. Consider the Koszul complex $\calK^{\sfR}$ given by the exterior algebra generated by $\xi_1,\cdots, \xi_n$ over $\ZZ[\sfR^*]$. By tensoring
$\calK^{\sfR}$ to the short exact sequences above, we obtain the short exact sequences of complexes, therefore we have the long exact sequences:
\begin{equation}
\cdots \to \Tor_{i+1}^{\ZZ[\sfR^*]}(\ZZ[W], \ZZ) \to \Tor_{i}^{\ZZ[\sfR^*]}(\ZZ[\tilde{K}], \ZZ) \to \Tor_{i}^{\ZZ[\sfR^*]}(\ZZ[K_1 ], \ZZ)\oplus
\Tor_{i}^{\ZZ[\sfR^*]}(\ZZ[K_2 ], \ZZ)\to \Tor_{i}^{\ZZ[\sfR^*]}(\ZZ[W], \ZZ) \to \cdots
\end{equation}
\begin{equation}
\cdots \to \Tor_{i+1}^{\ZZ[\sfR^*]}(\ZZ[K], \ZZ) \to \Tor_{i}^{\ZZ[\sfR^*]}(\calI_Z, \ZZ) \to \Tor_{i}^{\ZZ[\sfR^*]}(\ZZ[\tilde{K}], \ZZ) \to \Tor_{i}^{\ZZ[\sfR^*]}(\ZZ[K], \ZZ) \to \cdots 
\end{equation} 
The following claims can be easily observed:
%%%%%%%%%%%%%%%%%%%%%%%%%%%%%%%%%%%%%%%%%%
\begin{lem}
Suppose that $\Tor_{1}^{\ZZ[\sfR^*]}(\ZZ[W], \ZZ)=0$. Then $\Tor_{1}^{\ZZ[\sfR^*]}(\ZZ[\tilde{K}], \ZZ)=0$ if and only if $\Tor_{1}^{\ZZ[\sfR^*]}(\ZZ[K_1 ],
\ZZ)=\Tor_{1}^{\ZZ[\sfR^*]}(\ZZ[K_2 ], \ZZ)=0$. In this case,
\[
\Tor_{0}^{\ZZ[\sfR^*]}(\ZZ[\tilde{K}], \ZZ) = \Tor_{0}^{\ZZ[\sfR^*]}(\ZZ[K_1 ], \ZZ)\times_{\Tor_{0}^{\ZZ[\sfR^*]}(\ZZ[W], \ZZ)}\Tor_{0}^{\ZZ[\sfR^*]}(\ZZ[K_2 ], \ZZ).
\]
\end{lem}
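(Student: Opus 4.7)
The plan is to feed the short exact sequence $0\to\ZZ[\tilde K]\to\ZZ[K_1]\oplus\ZZ[K_2]\to\ZZ[W]\to 0$ from Theorem \ref{thm1} into the long exact sequence for $\Tor_*^{\ZZ[\sfR^*]}(-,\ZZ)$ displayed just above the lemma, and then to exploit the hypothesis $\Tor_1^{\ZZ[\sfR^*]}(\ZZ[W],\ZZ)=0$ to force all higher $\Tor$s of $\ZZ[W]$ to vanish as well. Once that stronger vanishing is in hand, both conclusions of the lemma will drop out of the long exact sequence essentially for free.

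The key first step is a Koszul rigidity observation. Because $u_1,\dots,u_n$ is a regular sequence on $\ZZ[\sfR^*]$, the complex $\calK^{\sfR}$ is a free $\ZZ[\sfR^*]$-resolution of $\ZZ$, so $\Tor^{\ZZ[\sfR^*]}_i(M,\ZZ)\cong H_i(\calK^{\sfR}\otimes_{\ZZ[\sfR^*]} M)$ is the Koszul homology of $M$ on $u_1,\dots,u_n$. The classical rigidity theorem says that for a suitably graded module $M$, vanishing of the first Koszul homology forces $u_1,\dots,u_n$ to be a regular sequence on $M$, whence every higher Koszul homology of $M$ vanishes as well. Applying this to $M=\ZZ[W]$, the hypothesis yields $\Tor^{\ZZ[\sfR^*]}_i(\ZZ[W],\ZZ)=0$ for all $i\geq 1$.

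With $\ZZ[W]$ now $\Tor$-acyclic in positive degrees, the long exact sequence immediately produces, for each $i\geq 1$, an isomorphism
\[
\Tor^{\ZZ[\sfR^*]}_i(\ZZ[\tilde K],\ZZ)\;\cong\;\Tor^{\ZZ[\sfR^*]}_i(\ZZ[K_1],\ZZ)\oplus\Tor^{\ZZ[\sfR^*]}_i(\ZZ[K_2],\ZZ),
\]
which at $i=1$ gives the claimed biconditional. It also produces a short exact sequence
\[
0\to\Tor^{\ZZ[\sfR^*]}_0(\ZZ[\tilde K],\ZZ)\to\Tor^{\ZZ[\sfR^*]}_0(\ZZ[K_1],\ZZ)\oplus\Tor^{\ZZ[\sfR^*]}_0(\ZZ[K_2],\ZZ)\to\Tor^{\ZZ[\sfR^*]}_0(\ZZ[W],\ZZ)\to 0.
\]
By naturality of the $\Tor$ long exact sequence, the middle map is induced by $(\sfg_1,-\sfg_2)$ from Theorem \ref{thm1}, so its kernel is by definition the fiber product $\Tor_0(\ZZ[K_1])\times_{\Tor_0(\ZZ[W])}\Tor_0(\ZZ[K_2])$, as required.

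The main obstacle is the Koszul rigidity step, which is textbook over a Noetherian local ring or a graded algebra over a field but needs a little care in the present $\ZZ$-graded setting. A reliable workaround is to note that $\ZZ[W]$ is $\ZZ$-free with basis the monomials supported on faces of $W$, so the Koszul homology commutes with the base changes $\ZZ\to\mathbb{F}_p$ and $\ZZ\to\QQ$; applying rigidity over each of these fields, $\Tor_1^{\ZZ[\sfR^*]}(\ZZ[W],\ZZ)=0$ propagates to vanishing of all higher $\Tor$s prime by prime, hence integrally. A more direct alternative is to observe that the hypothesis is equivalent to $\ZZ[W]$ being a free $\ZZ[\sfR^*]$-module (equivalently, to equivariant formality of $\calZ_W$), which kills all higher $\Tor$s at once and bypasses rigidity altogether.
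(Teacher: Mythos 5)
Your proposal is correct and follows essentially the same route as the paper: read the $\Tor$ long exact sequence associated to $0\to\ZZ[\tilde K]\to\ZZ[K_1]\oplus\ZZ[K_2]\to\ZZ[W]\to 0$, and observe that the hypothesis $\Tor_1^{\ZZ[\sfR^*]}(\ZZ[W],\ZZ)=0$ must first be promoted to vanishing of all higher $\Tor$'s of $\ZZ[W]$ so that, in particular, $\Tor_2(\ZZ[W])=0$ can absorb the connecting map needed for the ``if'' direction. The paper gives no explicit proof (``can be easily observed'') but flags exactly this rigidity step in the remark that follows, citing Proposition 2.3 of Franz--Puppe. You have correctly identified that this is the only nontrivial input, and your second workaround --- that $\Tor_1=0$ is equivalent to $\ZZ[W]$ being a free $\ZZ[\sfR^*]$-module, after which all higher $\Tor$'s trivially vanish and the short exact sequence even splits --- is precisely what Franz--Puppe prove.

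Your first workaround, however, has a gap. You claim the Koszul homology of $\ZZ[W]$ ``commutes with the base changes $\ZZ\to\mathbb{F}_p$ and $\ZZ\to\QQ$'' because $\ZZ[W]$ is $\ZZ$-free. But the universal coefficient theorem says only that there is a short exact sequence
\[
0\to H_1(\calK^{\sfR}\otimes\ZZ[W])\otimes_\ZZ k \to H_1(\calK^{\sfR}\otimes\ZZ[W]\otimes_\ZZ k)\to \Tor_1^\ZZ\bigl(H_0(\calK^{\sfR}\otimes\ZZ[W]),k\bigr)\to 0,
\]
so vanishing of $H_1$ over $\ZZ$ does \emph{not} by itself force vanishing of $H_1$ over $\mathbb{F}_p$: the obstruction is $p$-torsion in $H_0=\ZZ[W]/(u_1,\dots,u_n)\ZZ[W]$, which can certainly occur a priori (e.g.\ already for $\ZZ[u]\hookrightarrow\ZZ[x]$ with $u=2x$). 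One only learns that $H_0$ is $\ZZ$-torsion-free \emph{after} invoking the freeness equivalence --- so the ``prime-by-prime rigidity'' argument is circular unless you first run argument (b). Stick with (b), which is the route the paper takes via its Franz--Puppe citation. Everything else in your write-up --- including the identification of the degree-zero tail of the long exact sequence with the defining sequence of the fiber product via naturality --- is correct.
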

%%%%%%%%%%%%%%%%%%%%%%%%%%%%%%%%%%%%%%%%%%
\begin{lem}\label{fiberprodtor}
If $\Tor_{1}^{\ZZ[\sfR^*]}(\ZZ[K_1], \ZZ)=\Tor_{1}^{\ZZ[\sfR^*]}(\ZZ[K_2], \ZZ)=\Tor_{1}^{\ZZ[\sfR^*]}(\ZZ[K], \ZZ)=\Tor_{1}^{\ZZ[\sfR^*]}(\ZZ[W], \ZZ)=0$, then 
\[
\Tor_{0}^{\ZZ[\sfR^*]}(\ZZ[K], \ZZ) = \Tor_{0}^{\ZZ[\sfR^*]}(\ZZ[K_1 ], \ZZ)~\#^{\Tor_{0}^{\ZZ[\sfR^*]}(\calI_Z, \ZZ)}_{\Tor_{0}^{\ZZ[\sfR^*]}(\ZZ[W], \ZZ)}\Tor_{0}^{\ZZ[\sfR^*]}(\ZZ[K_- ], \ZZ).
\]
\end{lem}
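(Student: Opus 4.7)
My plan is to tensor the two short exact sequences
\[
0\to \ZZ[\tilde{K}] \to \ZZ[K_1]\oplus\ZZ[K_2] \to \ZZ[W] \to 0 \quad\text{and}\quad 0\to \calI_Z \to \ZZ[\tilde{K}] \to \ZZ[K]\to 0
\]
from Theorems \ref{thm1} and \ref{thm2} with the Koszul complex $\calK^{\sfR}$ over $\ZZ[\sfR^*]$, and to read the conclusion off the associated long exact $\Tor$-sequences using the given $\Tor_1$-vanishing.

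For the first sequence, the vanishing of $\Tor_1^{\ZZ[\sfR^*]}(\ZZ[W],\ZZ)$ kills the connecting map and yields the short exact sequence
\[
0\to \Tor_0^{\ZZ[\sfR^*]}(\ZZ[\tilde{K}],\ZZ)\to \Tor_0^{\ZZ[\sfR^*]}(\ZZ[K_1],\ZZ)\oplus \Tor_0^{\ZZ[\sfR^*]}(\ZZ[K_2],\ZZ)\to \Tor_0^{\ZZ[\sfR^*]}(\ZZ[W],\ZZ)\to 0,
\]
with middle map $((\sff_1)_*,-(\sff_2)_*)$. This is the $\Tor_0$-version of Theorem \ref{thm1} and identifies $\Tor_0^{\ZZ[\sfR^*]}(\ZZ[\tilde{K}],\ZZ)$ with the fiber product $\Tor_0^{\ZZ[\sfR^*]}(\ZZ[K_1],\ZZ)\times_{\Tor_0^{\ZZ[\sfR^*]}(\ZZ[W],\ZZ)}\Tor_0^{\ZZ[\sfR^*]}(\ZZ[K_2],\ZZ)$.

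I would then repeat this for the second short exact sequence: the hypothesis $\Tor_1^{\ZZ[\sfR^*]}(\ZZ[K],\ZZ)=0$ truncates its long exact sequence to
\[
0\to \Tor_0^{\ZZ[\sfR^*]}(\calI_Z,\ZZ)\to \Tor_0^{\ZZ[\sfR^*]}(\ZZ[\tilde{K}],\ZZ)\to \Tor_0^{\ZZ[\sfR^*]}(\ZZ[K],\ZZ)\to 0.
\]
Combining with the fiber-product description, $\Tor_0^{\ZZ[\sfR^*]}(\ZZ[K],\ZZ)$ is the quotient of the fiber product by the image of $\Tor_0^{\ZZ[\sfR^*]}(\calI_Z,\ZZ)$, where the maps $\Tor_0(\calI_Z,\ZZ)\to \Tor_0(\ZZ[K_i],\ZZ)$ are induced by the composition $\calI_Z \inc \ZZ[\tilde{K}]\surj \ZZ[K_i]$. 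This image is exactly the submodule $\{((\sff_1)_*(v),(\sff_2)_*(v))\mid v\in\Tor_0^{\ZZ[\sfR^*]}(\calI_Z,\ZZ)\}$ that is quotiented out in Definition \ref{defconn}.

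The one thing that will take care is matching module structures with Definition \ref{defconn}. I need the natural $\Tor_0(\ZZ[W],\ZZ)$-module structure on $\Tor_0(\calI_Z,\ZZ)$ to be the one induced by the $\ZZ[W]$-module structure on $\calJ_Z \cong \calI_Z$ transported through the isomorphism $\sfj$ from the proof of Theorem \ref{thm2}, and the maps $(\sff_i)_*$ to be the corresponding homomorphisms of $\Tor_0(\ZZ[K_i],\ZZ)$-modules fitting into the commutative diagram \eqref{connsumdiagram}. This is pure functoriality of $\Tor$ applied to the commuting square of Theorem \ref{thm1} restricted to $\calI_Z$, and should be the only genuinely delicate step.
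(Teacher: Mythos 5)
Your proof is correct and uses precisely the argument the paper intends: tensor the two short exact sequences from Theorems \ref{thm1} and \ref{thm2} with the Koszul complex, read off the resulting long exact $\Tor$-sequences (which the paper displays explicitly just before the lemma), and use the $\Tor_1$-vanishing hypotheses to truncate them into short exact sequences that reproduce, at the level of $\Tor_0$, the fiber-product and connected-sum constructions of Definition \ref{defconn}. The paper offers no explicit proof (it introduces the lemma with ``the following claims can be easily observed''), so your write-up essentially supplies the omitted details; your closing remark about transporting the $\ZZ[W]$-module structure on $\calI_Z \cong \calJ_Z$ through $\Tor_0$ and matching it with diagram \eqref{connsumdiagram} is the right point of care, and functoriality of $\Tor$ applied to the commutative square in Theorem \ref{thm1} is exactly what handles it.
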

%%%%%%%%%%%%%%%%%%%%%%%%%%%%%%%%%%%%%%%%%%
\begin{rem}
By Proposition 2.3 \cite{FranzPuppe07}, $\Tor_1=0$ implies $\Tor_i=0$ for all $i>0$. Therefore, in the above lemmata, we actually have $\Tor_{0}^{\ZZ[\sfR^*]}(\ZZ[\tilde{K}], \ZZ)=\Tor_*^{\ZZ[\sfR^*]}(\ZZ[\tilde{K}], \ZZ)$ and $\Tor_*^{\ZZ[\sfR^*]}(\ZZ[K], \ZZ) =\Tor_{0}^{\ZZ[\sfR^*]}(\ZZ[K], \ZZ)$.
\end{rem}
%%%%%%%%%%%%%%%%%%%%%%%%%%%%%%%%%%%%%%%%%%
\begin{lem}\label{badgene}
Suppose that $K_1,K_2$ are defined by a generic cut of a polytope and $\Tor_{1}^{\ZZ[\sfR^*]}(\ZZ[W], \ZZ)=0$. If $\Tor_{1}^{\ZZ[\sfR^*]}(\ZZ[K], \ZZ)=0$, then
$\Tor_{1}^{\ZZ[\sfR^*]}(\ZZ[K_1 ], \ZZ)=\Tor_{1}^{\ZZ[\sfR^*]}(\ZZ[K_2 ], \ZZ)=0$.
\end{lem}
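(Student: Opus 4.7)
The plan is to chain together the two displayed long exact sequences and use a module isomorphism available in the generic cut setting. By the remark just above invoking \cite{FranzPuppe07}, the hypothesis $\Tor_1^{\ZZ[\sfR^*]}(\ZZ[W],\ZZ)=0$ forces $\Tor_i^{\ZZ[\sfR^*]}(\ZZ[W],\ZZ)=0$ for every $i\ge 1$, and likewise $\Tor_i^{\ZZ[\sfR^*]}(\ZZ[K],\ZZ)=0$ for every $i\ge 1$. Feeding these vanishings into the long exact sequence associated to $0 \to \ZZ[\tilde K] \to \ZZ[K_1]\oplus \ZZ[K_2] \to \ZZ[W] \to 0$ at index $i=1$ collapses the flanking $\Tor$'s of $\ZZ[W]$ and yields
\[
\Tor_1^{\ZZ[\sfR^*]}(\ZZ[\tilde K],\ZZ)\;\cong\;\Tor_1^{\ZZ[\sfR^*]}(\ZZ[K_1],\ZZ)\oplus \Tor_1^{\ZZ[\sfR^*]}(\ZZ[K_2],\ZZ),
\]
so it suffices to prove $\Tor_1^{\ZZ[\sfR^*]}(\ZZ[\tilde K],\ZZ)=0$. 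Similarly, the sequence attached to $0 \to \calI_Z \to \ZZ[\tilde K] \to \ZZ[K] \to 0$ at index $i=1$ collapses, producing $\Tor_1^{\ZZ[\sfR^*]}(\ZZ[\tilde K],\ZZ)\cong \Tor_1^{\ZZ[\sfR^*]}(\calI_Z,\ZZ)$.

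Thus the entire problem reduces to showing $\Tor_1^{\ZZ[\sfR^*]}(\calI_Z,\ZZ)=0$. Since $O_{\tilde K}(Z)\subset W$, the map $\sfj\colon \calI_Z\to \calJ_Z$, $x_\sigma\mapsto x_\sigma$, used already in the proof of Theorem \ref{thm2} is an isomorphism of $\ZZ[x_1,\ldots,x_m]$-modules, hence of $\ZZ[\sfR^*]$-modules by restriction, so it is enough to compute $\Tor_1^{\ZZ[\sfR^*]}(\calJ_Z,\ZZ)$. The genericity of the cut enters here through Lemma \ref{gh}: $W = \sstar_{K_+}(o) = \sstar_{K_-}(o)$ is a cone with apex $o$, and $Z = O_{\tilde K}(o)$. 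One then checks directly that $\calJ_Z = x_o\cdot \ZZ[W]$ (each generator $x_\sigma$ with $o\in\sigma\in Z$ factors as $x_o\cdot x_{\sigma\setminus\{o\}}$ with $x_{\sigma\setminus\{o\}}\in\ZZ[W]$, and conversely any nonzero $x_o\cdot x_\tau\in\ZZ[W]$ equals $x_{\tau\cup\{o\}}$ with $\tau\cup\{o\}\in Z$), and that multiplication by $x_o$ is injective on $\ZZ[W]$ (if $\tau\in W$ then $\tau\cup\{o\}\in W$, because $W$ is a cone on $o$). Therefore multiplication by $x_o$ is a degree-$2$ isomorphism $\ZZ[W]\xrightarrow{\sim}\calJ_Z$ of $\ZZ[\sfT^*]$-modules, and restricting to $\ZZ[\sfR^*]$ gives
\[
\Tor_1^{\ZZ[\sfR^*]}(\calJ_Z,\ZZ)\;\cong\;\Tor_1^{\ZZ[\sfR^*]}(\ZZ[W],\ZZ)\;=\;0.
\]
Chaining the three isomorphisms completes the proof.

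The step I expect to require the most care is the module identification $\calJ_Z\cong\ZZ[W]$: it rests on two separate facts, namely that every monomial generator of $\calJ_Z$ factors through $x_o$ and that $x_o$ is a non-zero-divisor on $\ZZ[W]$, both of which hold precisely because the generic cut forces $W$ to be the cone over $\link_{K_\pm}(o)$ with apex $o$. Without this cone structure the argument breaks down, which explains why the lemma is stated only for $K_1, K_2$ arising from a generic cut rather than for arbitrary strong connected sums.
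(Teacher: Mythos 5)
Your proof is correct and follows essentially the same route as the paper's: identify $\calI_Z\cong\ZZ[W]$ (after a degree shift), then run both long exact sequences to propagate the vanishing from $\ZZ[W]$ and $\ZZ[K]$ to $\ZZ[\tilde K]$ and then to $\ZZ[K_1]\oplus\ZZ[K_2]$. The only difference is one of detail: the paper simply ``observes'' $\calI_Z\cong\ZZ[W]$, whereas you supply the justification by noting that $W=\sstar_{K_\pm}(o)$ is a cone with apex $o$, so $\calJ_Z$ is the principal ideal $(x_o)$ and $x_o$ is a non-zero-divisor on $\ZZ[W]$; this is exactly the content behind the paper's claim and is worth making explicit.
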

\begin{proof}
In this case, observe that $\calI_Z \cong \ZZ[W]$ as $\ZZ[\sfT^*]$-modules. Thus $\Tor_{1}^{\ZZ[\sfR^*]}(\ZZ[W], \ZZ)=\Tor_{1}^{\ZZ[\sfR^*]}(\ZZ[K], \ZZ)=0$
implies $\Tor_{1}^{\ZZ[\sfR^*]}(\ZZ[\tilde{K}], \ZZ)=0$ and hence $\Tor_{1}^{\ZZ[\sfR^*]}(\ZZ[K_1 ], \ZZ)=\Tor_{1}^{\ZZ[\sfR^*]}(\ZZ[K_2 ], \ZZ)=0$.
\end{proof}
%%%%%%%%%%%%%%%%%%%%%%%%%%%%%%%%%%%%%%%%%%
\begin{rem}
The opposite statement of Lemma \ref{badgene} is not true. We give an example which shows that $\Tor_{1}^{\ZZ[\sfR^*]}(\ZZ[W],
\ZZ)=\Tor_{1}^{\ZZ[\sfR^*]}(\ZZ[K_1], \ZZ)=\Tor_{1}^{\ZZ[\sfR^*]}(\ZZ[K_2], \ZZ)=0$ does not imply $\Tor_{1}^{\ZZ[\sfR^*]}(\ZZ[K], \ZZ)=0$.

Consider the following simplicial complexes
\[
K \xymatrix{
&\bullet_4\ar@{-}[rrd]&&\\
\bullet_1\ar@{-}[ru]\ar@{-}[rdd]&&&\bullet_3\ar@{-}@/^2pc/[lldd]\\
&&\circ_5\ar@{.}[ur] \ar@{.}[ld]&\\
&\bullet_2&&\\
}
\ \ \ \ \ \ 
K_1 \xymatrix{
&\bullet_4\ar@{-}[rrd]&&\\
\bullet_1\ar@{-}[ru]\ar@{-}[rdd]&&&\bullet_3\ar@{.}@/^2pc/[lldd]\\
&&\bullet_5\ar@{-}[ur] \ar@{-}[ld]&\\
&\bullet_2&&\\
}
\ \ \ \ \ \ 
K_2 \xymatrix{
&\circ_4\ar@{.}[rrd]&&\\
\circ_1\ar@{.}[ru]\ar@{.}[rdd]&&&\bullet_3\ar@{-}@/^2pc/[lldd]\\
&&\bullet_5\ar@{-}[ur] \ar@{-}[ld]&\\
&\bullet_2&&\\
}
\]
$K$ is a strong connected sum of $K_+$ and $K_-$ along $W:=K_1 \cap K_2$. Consider the following $2\times 5$ matrix $B$:
\[
B = 
\begin{pmatrix}
1&0&-2& 0&-1\\
0&2&0 &-1&1
\end{pmatrix}
\]
By direct computation (we used \emph{Macaulay2}), we find that  
\[
\Tor_{1}^{\ZZ[\sfR^*]}(\ZZ[W], \ZZ)=\Tor_{1}^{\ZZ[\sfR^*]}(\ZZ[K_1], \ZZ)=\Tor_{1}^{\ZZ[\sfR^*]}(\ZZ[K_2], \ZZ)=0
\]
but $\Tor_{1}^{\ZZ[\sfR^*]}(\ZZ[K], \ZZ)\not=0$. 

The above example comes from cutting a labeled polytope $(\Delta,\sfb)$ that corresponds to the direct product of weighted projective space, $\CP^1_{12} \times \CP^1_{12}$: 
\[
\Delta\ \ \ \ \xymatrix{
\bullet\ar@{-}[rr]_{H_4}\ar@{-}[dd]^{H_1}&\circ&\bullet\ar@{-}[dd]^{H_3}\\
\circ &&\circ\\
\bullet\ar@{-}[rr]_{H_2} &\circ\ar@{.}[ru]^{H_5}& \bullet 
} 
\ \ \ \ \ \ \ \ \ \ \ \ 
\Delta_1\ \ \ \ \xymatrix{
\bullet\ar@{-}[rr]_{H_4}\ar@{-}[dd]^{H_1}&\circ&\bullet\ar@{-}[d]^{H_3}\\
\circ &&\bullet\ar@{.}[d]\\
\bullet\ar@{-}[r]_{H_2} &\bullet\ar@{-}[ru]^{H_5}\ar@{.}[r]& \circ 
} 
\ \ \ \ \ \ \ \ \ \ \ \ 
\Delta_2\ \ \ \ \xymatrix{
\circ\ar@{.}[rr]_{H_4}\ar@{.}[dd]^{H_1}&\circ&\circ\ar@{.}[d]\\
\circ &&\bullet\ar@{-}[d]^{H_3}\\
\circ\ar@{.}[r]&\bullet\ar@{-}[ru]^{H_5}\ar@{-}[r]_{H_2} & \bullet 
} 
\]
The polytope $\Delta$ is labeled by $\sfb=(1,2,2,1)$, the cutting facet $H_5$ is labeled by $1$, and the matrix $B$ actually corresponds to the extended
$B$-matrix $\tilde{B}$ in the notation of Section \ref{cuttingprocess}.
\end{rem}

%%%%%%%%%%%%%%%%%%%%%%%%%%%%%%%%%%%%%%%%%%
%   		     BIBLIOGRAPHY 
%%%%%%%%%%%%%%%%%%%%%%%%%%%%%%%%%%%%%%%%%%
\bibliography{references}{}
\bibliographystyle{plain}
%%%%%%%%%%%%%%%%%%%%%%%%%%%%%%%%%%%%%%%%%%
%%%%%%%%%%%%%%%%%%%%%%%%%%%%%%%%%%%%%%%%%%
\end{document}